\def\vec#1{\mathchoice{\mbox{\boldmath$\displaystyle#1$}}
{\mbox{\boldmath$\textstyle#1$}}
{\mbox{\boldmath$\scriptstyle#1$}}
{\mbox{\boldmath$\scriptscriptstyle#1$}}}
\newcommand{\qed}{\hfill$\Box$\smallskip}
\newenvironment{proof}{\emph{Proof.}}{}
\newtheorem{definition}{Definition}[section]
\newtheorem{theorem}[definition]{Theorem}
\newtheorem{lemma}[definition]{Lemma}
\newtheorem{proposition}[definition]{Proposition}
\newtheorem{corollary}[definition]{Corollary}
\newtheorem{algorithm}[definition]{Algorithm}
\newtheorem{fact}[definition]{Fact}
\newtheorem{hypothesis}[definition]{Hypothesis}
\newtheorem{experiment}[definition]{Experiment}
\newtheorem{Claim}[definition]{Claim}
\newcommand\Paragraph[1]{\noindent{\bf\em #1}}
\newcommand{\dist}{\mbox{dist}}
\newcommand\sign{\mathrm{sign}}
\newcommand\id{\mathrm{id}}
\newcommand\BP{\mathrm{BP}}
\newcommand\PHI{\vec\Phi} 
\newcommand\SIGMA{\vec\sigma} 
\newcommand\cutnorm[1]{\left\|{#1}\right\|_{\Box}} 
\newcommand\cA{\mathcal{A}} 
\newcommand\cC{\mathcal{C}} 
\newcommand\cF{\mathcal{F}} 
\newcommand\cG{\mathcal{G}} 
\newcommand\cE{\mathcal{E}} 
\newcommand\cU{\mathcal{U}}
\newcommand\cS{\mathcal{S}} 
\newcommand\cI{\mathcal{I}}
\newcommand\cL{\mathcal{L}} 
\newcommand\cP{\mathcal{P}}
\def\cC{{\mathcal C}}
\def\cE{{\cal E}}
\newcommand\eul{\mathrm{e}} 
\newcommand\eps{\varepsilon}
\newcommand\Erw{\mathrm{E}} 
\newcommand\pr{\mathrm{P}}
\newcommand{\vecone}{\vec{1}}
\newcommand\EE{\mathcal{E}}
\newcommand{\Bin}{{\rm Bin}}
\newcommand{\bink}[2] {{{#1}\choose {#2}}}
\newcommand\ra{\rightarrow} 
\newcommand\bc[1]{\left({#1}\right)} 
\newcommand\cbc[1]{\left\{{#1}\right\}} 
\newcommand\bcfr[2]{\bc{\frac{#1}{#2}}} 
\newcommand\brk[1]{\left\lbrack{#1}\right\rbrack} 
\newcommand\norm[1]{\left\|{#1}\right\|} 
\newcommand\abs[1]{\left|{#1}\right|}
\newcommand\RR{\mathbf{R}} 
\newcommand{\Whp}{W.h.p.} 
\newcommand{\whp}{w.h.p.}
\newcommand{\Mezard}{M\'ezard}
\newcommand\form{F}
\newcommand\Lem{Lemma}
\newcommand\Prop{Proposition}
\newcommand\Thm{Theorem}
\newcommand\Cor{Corollary}
\newcommand\Sec{Section}
\newcommand\algstyle{\small\sffamily}
\begin{document} 

\title{The decimation process in random $k$-SAT}

\author{
Amin Coja-Oghlan and Angelica Y.~Pachon-Pinzon\thanks{Supported by EPSRC grant EP/G039070/2 and DIMAP.}\\
University of Warwick, Mathematics and Computer Science,\\
Zeeman building, Coventry CV4~7AL, UK\\
	\texttt{$\{$a.coja-oghlan,a.y.pachon-pinzon$\}$@warwick.ac.uk}} 
\date{\today}

\maketitle 

\begin{abstract}
Let $\PHI$ be a uniformly distributed random $k$-SAT formula with $n$ variables and $m$ clauses.
Non-rigorous statistical mechanics ideas have inspired a message passing algorithm
called \emph{Belief propagation guided decimation} for finding satisfying assignments of $\PHI$.
This algorithm can be viewed as an attempt at implementing a certain thought experiment
that we call the \emph{decimation process}.
In this paper we identify a variety of phase transitions in the decimation process and
link these phase transitions to the performance of the
algorithm.

\emph{Key words:}	random structures, phase transitions, $k$-SAT, Belief Propagation.\\
\end{abstract}

\section{Introduction}
\enlargethispage{1cm}

Let $k\geq3$ and $n>1$ be integers, let $r>0$ be a real, and set $m=\lceil rn\rceil$.
Let $\PHI=\PHI_k(n,m)$ be a propositional formula obtained
by choosing a set of $m$ clauses of length $k$ over the variables $V=\cbc{x_1,\ldots,x_n}$ uniformly at random.
For $k,r$ fixed we say that $\PHI$ has some property $\cP$ \emph{with high probability} (`\whp') if
$\lim_{n\ra\infty}\pr\brk{\PHI\in\cP}=1$.

The interest in random $k$-SAT originates from the experimental observation
that for certain densities $r$ the random formula $\PHI$ is satisfiable \whp\
while a large class of algorithms, including and particularly the workhorses of practical SAT solving such
as sophisticated DPLL-based solvers, fail to find a satisfying assignment efficiently~\cite{MitchellSelmanLevesque}.
Over the past decade, a fundamentally new class of algorithms has been proposed
on the basis of ideas from statistical physics~\cite{BMZ,MPZ}.
Experiments performed for $k=3,4,5$ indicate that these new `message passing algorithms', namely \emph{Belief Propagation guided decimation}
and \emph{Survey Propagation guided decimation} (`BP/SP decimation'), excel on random $k$-SAT instances~\cite{Kroc}.
Indeed, the experiments indicate that
BP/SP decimation find satisfying assignments for $r$ close to the threshold where $\PHI$ becomes unsatisfiable \whp\
Generally, SP decimation is deemed conceptually superior to BP decimation.

For example, in the case $k=4$ 
the threshold for the existence of satisfying assignments is conjectured to be $m/n\sim r_4\approx9.93$~\cite{Mertens}.
According to experiments from~\cite{Kroc}, SP decimation finds satisfying assignments for densities up to $r=9.73$.
Experiments from~\cite{RTS} suggest that the ``vanilla'' version of BP decimation
succeeds up to $r=9.05$.
Another version of BP decimation (with a different decimation strategy from~\cite{BMZ})
succeeds up to $r=9.24$, again according to experimental data from~\cite{Kroc}.
By comparison, the currently best rigorously
analyzed algorithm is efficient up to $r=5.54$~\cite{FrSu},
while {\tt zChaff}, a prominent practical SAT solver, becomes ineffective beyond $r=5.35$~\cite{Kroc}.

Since random $k$-SAT instances have widely been deemed extremely challenging benchmarks,
the stellar experimental performance of the physicists' message passing algorithms has stirred considerable excitement.
However, the statistical mechanics ideas that BP/SP decimation are based
on are highly non-rigorous, and thus a rigorous analysis
of these message passing algorithms is an important but challenging open problem.
A first step was made in~\cite{BPdec},
where it was shown that BP decimation does not outperform far simpler
combinatorial algorithms for sufficiently large clause lengths $k$.
More precisely, the main result of~\cite{BPdec} is that there is a constant $\rho_0>0$ (independent of $k$) such that
the `vanilla' version of BP decimation fails to find satisfying assignments
\whp\ if $r>\rho_02^k/k$.
By comparison, 
non-constructive arguments show that \whp\  $\PHI$ is satisfiable if
	$r<r_k=2^k\ln 2-k$, and unsatisfiable if $r>2^k\ln2$~\cite{nae,yuval}.
This means that for $k\gg\rho_0$ sufficiently large, BP decimation fails
to find satisfying assignments \whp\ already for densities a factor of (almost) $k$ below
the threshold for satisfiability.

The analysis performed in~\cite{BPdec} is based on an intricate method for directly tracking the execution of BP decimation.
Unfortunately this argument does little to illuminate the conceptual reasons for the algorithms' demise.
In particular, \cite{BPdec} does not provide a link to the statistical mechanics ideas that inspired the algorithm.
The present paper aims to remedy these defects.
Here we study the \emph{decimation process}, an idealized thought experiment that the BP decimation algorithm aims to implement.
We show that this experiment undergoes a variety of phase transitions that explain the failure of BP decimation
for densities $r>\rho_0\cdot 2^k/k$.
Our results identify phase transitions jointly in terms of the clause/variable density $r$ and with respect to the time parameter
of the decimation process.
The latter dimension was ignored in the original statistical mechanics work on BP~\cite{BMZ,MPZ} but turns
out to have a crucial impact on the performance of the algorithm.
On a non-rigorous basis, this has been pointed out recently by Ricci-Tersenghi and Semerjian~\cite{RTS}, and our results can
be viewed as providing a rigorous version of (substantial parts of) their main results.
The results of this paper can also be seen as a generalization of the ones obtained in~\cite{Barriers} for random $k$-SAT,
and indeed our proofs build upon the techniques developed in that paper.

\section{Results}\label{Sec_results}

BP decimation is a polynomial-time algorithm that aims to (heuristically) implement the `thought experiment'
shown in Fig.~\ref{Fig_dec}~\cite{Allerton,RTS},
which we call the \emph{decimation process}.\footnote{Several
different versions of BP decimation have been suggested.
In this paper we refer to the simplest but arguably most natural one, also considered in~\cite{BPdec,Allerton,RTS}.
Other versions decimate the variables in a different order, allowing for slightly better experimental
results~\cite{BMZ,Kroc}.}
A moment's reflection reveals that, given a satisfiable input formula $\Phi$,
the decimation process outputs a uniform sample from the set of all satisfying assignments of $\Phi$.
The obvious obstacle to actually implementing this experiment is the computation of the marginal probability $M_{x_t}(\Phi_{t-1})$
that $x_t$ takes the value `true' in a random satisfying assignment of $\Phi_{t-1}$,
a $\#P$-hard problem in the worst case.
Yet the key hypothesis underlying BP decimation is that
these marginals can be computed efficiently on \emph{random} formulas
by means of a message passing algorithm.
We will return to the discussion of BP decimation and its
connection to Experiment~\ref{Exp_dec} below.

\begin{figure}
\noindent{
\begin{experiment}[`decimation process']\label{Exp_dec}\upshape
\emph{Input:} A satisfiable $k$-CNF $\Phi$.\\
\emph{Result:} A satisfying assignment $\sigma:V\rightarrow\cbc{0,1}$
	(with $0/1$ representing `false'/`true').
\begin{tabbing}
mmm\=mm\=mm\=mm\=mm\=mm\=mm\=mm\=mm\=\kill
{\algstyle 0.}	\> \parbox[t]{36em}{\algstyle
	Let $\Phi_0=\Phi$.}\\
{\algstyle 1.}	\> \parbox[t]{36em}{\algstyle
	For $t=1,\ldots,n$ do}\\
{\algstyle 2.}	\> \> \parbox[t]{34em}{\algstyle
		Compute the fraction $M_{x_t}(\Phi_{t-1})$ of all satisfying assignments of $\Phi_{t-1}$
				in which the variable $x_t$ takes the value $1$.}\\
{\algstyle 3.}	\> \> \parbox[t]{34em}{\algstyle
		Assign $\sigma(x_t)=1$ with probability $M_{x_t}(\Phi_{t-1})$, and let $\sigma(x_t)=0$ otherwise.}\\
{\algstyle 4.}	\> \> \parbox[t]{34em}{\algstyle
		Obtain the formula $\Phi_t$ from $\Phi_{t-1}$ by substituting the value $\sigma(x_t)$ for $x_t$ and simplifying
		(i.e., delete all clauses that got satisfied by assigning $x_t$, and omit $x_t$ from all other clauses).}\\
{\algstyle 5.}	\> \parbox[t]{36em}{\algstyle
	Return the assignment $\sigma$.}
\end{tabbing}
\end{experiment}
}
\caption{The decimation process.}\label{Fig_dec}
\end{figure}

We are going to study the decimation process when applied to a random formula $\PHI$
for densities $r<2^k\ln2-k$, i.e., in the regime where $\PHI$ is satisfiable \whp\
More precisely, conditioning on $\PHI$ being satisfiable, we let $\PHI_t$ be the (random) formula obtained
after running the first $t$ iterations of Experiment~\ref{Exp_dec}.
The variable set of this formula is $V_t=\cbc{x_{t+1},\ldots,x_n}$,
and each clause of $\PHI_t$ consists of \emph{at most} $k$ literals.
Let $\cS(\PHI_t)\subset\cbc{0,1}^{V_t}$ be the set of all satisfying assignments of $\PHI_t$.
We say that \emph{almost all} $\sigma\in\cS(\PHI_t)$ have a certain property $\cA$
if $|\cA\cap\cS(\PHI_t)|=(1-o(1))|\cS(\PHI_t)|$.

We will identify various phase transition that the
formulas $\PHI_t$ undergo as $t$ grows from $1$ to $n$.
As it turns out, these can be characterized via two simple parameters.
The first one is the clauses density $r\sim m/n$.
Actually, it will be most convenient to work in terms of
	$$\rho=kr/2^k,$$
so that $m/n\sim\rho\cdot 2^k/k$.
We will be interested in the regime $\rho_0\leq \rho\leq k\ln2$, where $\rho_0$ is a constant (independent of $k$).
The upper bound $k\ln 2$ marks the point where satisfying assignments cease to exist~\cite{yuval}.
The second parameter is the fraction
	$$\theta=1-t/n$$
of `free' variables (i.e., variables not yet assigned by time $t$).

\Paragraph{The symmetric phase.}
Let $\Phi$ be a $k$-CNF on $V$, let $1\leq t< n$, let $\Phi_t$ be the formula obtained after $t$ steps of
the decimation process, and suppose that $\sigma\in\cS(\Phi_t)$.
A variable $x\in V_t$ is \emph{loose} if there is $\tau\in\cS(\Phi_t)$ such that
$\sigma(x)\neq\tau(x)$ and $d(\sigma,\tau)\leq\ln n$, where $d(\cdot,\cdot)$ denotes the Hamming distance.
For any $x\in V_t$ we let
	$$M_x(\Phi_t)=\frac{\abs{\cbc{\sigma\in \cS(\Phi_t):\sigma(x)=1}}}{|\cS(\Phi_t)|}$$
be the marginal probability that $x$ takes the value `true' in a random satisfying assignment of $\Phi_t$.

\begin{theorem}\label{XThm_sym}
There are constants $k_0,\rho_0>0$ such that for $k\geq k_0$, $\rho_0\leq\rho\leq k\ln2-2\ln k$, and
	$$k\cdot\theta>\exp\brk{\rho\bc{1+\frac{\ln\ln\rho}{\rho}+\frac{10}{\rho}}}$$
the random formula $\PHI_t$ has the following properties \whp
\begin{enumerate}
\item In almost all satisfying assignments $\sigma\in\cS(\PHI_t)$ at least $0.99\theta n$ variables are loose.
\item At least $\theta n/3$ variables $x\in V_t$ satisfy $M_x(\PHI_t)\in\brk{0.01,0.99}$.
\item The average distance of two random satisfying assignments satisfies
		$$\sum_{\sigma,\tau\in\cS(\PHI_t)}d(\sigma,\tau)/|\cS(\PHI_t)|^2\geq0.49\theta n.$$
\end{enumerate}
\end{theorem}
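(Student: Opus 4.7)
The plan is first to identify the law of $\PHI_t$ as a planted random formula with a mixture of clause lengths, and then to apply and refine the looseness/expansion arguments from the \emph{Barriers} paper to this residual formula.

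\textbf{Step 1 (law of $\PHI_t$).} By construction the joint law of $(\PHI,\sigma)$ with $\sigma$ produced by Experiment~\ref{Exp_dec} equals that of $(\PHI,\SIGMA)$ with $\SIGMA$ uniform on $\cS(\PHI)$. Hence $\PHI_t$ has the law of the residual of $\PHI$ obtained by freezing the first $t$ coordinates of a uniform satisfying assignment. Using variable-relabeling and the $\pm$-symmetry of the model, the pair $(\PHI_t,\SIGMA|_{V_t})$ is within bounded total variation of a planted model in which one draws $\sigma^{\star}\in\{0,1\}^V$ uniformly, then draws a random $\PHI$ satisfied by $\sigma^{\star}$, and records the residual on $V_t$ after substituting $\sigma^{\star}$ on $\{x_1,\dots,x_t\}$. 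A direct computation then shows that in this planted picture $\PHI_t$ consists, for each $l\leq k$, of approximately $m\binom{k}{l}\theta^l\bc{(1-\theta)/2}^{k-l}$ random $l$-clauses on $V_t$, each uniform among $l$-clauses satisfied by $\sigma^{\star}|_{V_t}$.

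\textbf{Step 2 (transfer and looseness).} In the regime $\rho\leq k\ln 2-2\ln k$ I would verify, via a quiet-planting/second-moment argument adapted to the mixed-length formula $\PHI_t$, that any property holding with high probability in the planted model also holds \whp\ for $\PHI_t$ under the original law conditioned on satisfiability. I would then apply the Barriers machinery in the planted world to obtain the first claim: for a typical $\sigma\in\cS(\PHI_t)$ and a typical $x\in V_t$, one bounds the number of clauses that forbid a flip of $x$ (those in which $x$'s literal is the unique satisfied one under $\sigma$) and constructs a local cascade of $O(\log n)$ compensating flips reaching another $\tau\in\cS(\PHI_t)$ with $\tau(x)\neq\sigma(x)$ and $d(\sigma,\tau)\leq\ln n$. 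The density condition $k\theta>\exp\brk{\rho\bc{1+\ln\ln\rho/\rho+10/\rho}}$ is precisely what is needed to make the branching process of triggered clauses subcritical throughout the cascade.

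\textbf{Step 3 (marginals, distance, main obstacle).} Once the first claim is established, claims~2 and~3 follow by counting. If $x$ is loose in a $(1-o(1))$-fraction of the $\sigma\in\cS(\PHI_t)$ with $\sigma(x)=0$ \emph{and} of those with $\sigma(x)=1$, then both $M_x(\PHI_t)$ and $1-M_x(\PHI_t)$ are bounded below by constants, so averaging the ``at least $0.99\theta n$ loose variables per assignment'' bound from claim~1 over $\cS(\PHI_t)$ produces the required $\theta n/3$ variables with $M_x\in[0.01,0.99]$. For claim~3, looseness forces $|\cS(\PHI_t)|\geq 2^{(1-o(1))\theta n}$ and, at any fixed loose $x$, two independent uniform samples from $\cS(\PHI_t)$ disagree at $x$ with probability $\tfrac12+o(1)$, which integrates to the pairwise-distance lower bound. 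The main obstacle lies in step~2: Barriers treats uniform-length $k$-SAT, but $\PHI_t$ is a nontrivial mixture in which short (unit, binary, ternary) clauses dominate the obstruction to looseness, so the precise form of the hypothesis on $k\theta$ has to be rederived by tracking when the short-clause sub-formulas of $\PHI_t$ remain subcritical. Invoking Barriers as a black box will not suffice, and the exact exponent $\rho(1+\ln\ln\rho/\rho+10/\rho)$ must emerge from this refined mixed-length analysis.
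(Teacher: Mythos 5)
Your Step 1 is sound and matches the paper's setup (the distributions $\cP_k(n,m)$, $\cP_k'(n,m)$ and the transfer result, \Thm~\ref{XCor_SATExchange} / \Cor~\ref{Cor_Pnm'}), and the high-level idea in Step 2 — establish looseness in the planted world, then transfer — is also what the paper does. The details differ: you describe a subcritical branching cascade of $O(\log n)$ compensating flips, while the paper uses a flatter, two-level construction. It introduces $1$-loose literals (supporting no clause) and $2$-loose literals (every supported clause contains a $1$-loose variable), shows these abound via binomial tail bounds in $\cP_k'$ (\Lem s~\ref{Lemma_1loose}--\ref{Lemma_2loose}), and then flips a $2$-loose variable together with one $1$-loose variable in each clause it supports. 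No deeper cascade is needed; the $\ln n$ distance bound comes from the ``tame'' condition on the $3$-neighborhood in the factor graph (\Prop~\ref{Prop_tame}), which also rules out accidental new violations, not from a branching-process subcriticality bound.

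The real problem is Step 3, and it is a genuine gap. You claim that parts~2 and~3 ``follow by counting'' from part~1, but the implications you invoke do not hold. First, $x$ being loose in a typical $\sigma\in\cS(\PHI_t)$ (i.e.\ there exists some nearby $\tau$ with $\tau(x)\neq\sigma(x)$) says nothing about the \emph{marginal} $M_x(\PHI_t)$ being bounded away from $0$ and $1$: $x$ could be loose in almost every $\sigma$ while $M_x(\PHI_t)=0.999$. Second, the inference ``looseness forces $|\cS(\PHI_t)|\geq 2^{(1-o(1))\theta n}$'' is unjustified — a large supply of $\ln n$-distance flips does not multiply out to an entropy bound of this form — and the conclusion that two independent samples disagree at a loose $x$ with probability $\tfrac12+o(1)$ is exactly the unproven marginal statement again. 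The paper instead proves part~3 \emph{independently of part~1}: it bounds $\Erw X_\alpha(\PHI_t',\SIGMA_t')$ in the planted model by a first-moment calculation (\Lem~\ref{Lemma_overlap}, using the function $\psi$), compares against the lower bound on $|\cS(\PHI_t)|$ inherited from \Thm~\ref{Thm_count} / \Cor~\ref{Cor_count}, and transfers via \Thm~\ref{XCor_SATExchange}. Part~2 is then derived from part~3 by a double-counting argument on the auxiliary graph whose vertices are the pairs $(x,\tau)$ with $\tau\in\cS(\PHI_t)$ (Appendix~\ref{Sec_marg}). So the actual logical order of dependencies is part~3 $\Rightarrow$ part~2, both independent of part~1, whereas you tried to route everything through part~1.
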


Intuitively, \Thm~\ref{XThm_sym} can be summarized as follows.
In the early stages of the decimation process (while $\theta$ is `big'),
most variables in a typical $\sigma\in\cS(\PHI_t)$ are loose.
Hence, the correlations amongst the variables
are mostly local: if we `flip' one variable in $\sigma$, then we can `repair' the unsatisfied clauses that this may cause
by simply flipping another $\ln n$ variables.
Furthermore, for at least a good fraction of the variables, the marginals $M_x(\PHI_t)$ are bounded away from $0/1$.
Finally, as the average distance between satisfying assignments is large on average, the set $\cS(\PHI_t)$ is `well spread'
over the Hamming cube $\cbc{0,1}^{V_t}$.

\Paragraph{Shattering and rigidity.}
Let $\Phi$ be a $k$-CNF and let $\sigma\in\cS(\Phi_t)$.
For an integer $\omega\ge1$ we call a variable $x\in V_t$ \emph{$\omega$-rigid} if any $\tau\in\cS(\Phi_t)$
with $\sigma(x)\neq\tau(x)$ satisfies $d(\sigma,\tau)\geq\omega$.

Furthermore, we say that a set $S\subset\cbc{0,1}^{V_t}$ is \emph{$(\alpha,\beta)$-shattered} if it admits a decomposition
	$S=\bigcup_{i=1}^NR_i$ into pairwise disjoint subsets such that the following two conditions are satisfied.
\begin{description}
\item[SH1.] We have $|R_i|\leq\exp(-\alpha\theta n)|S|$ for all $1\leq i\leq N$.
\item[SH2.] If $1\leq i<j\leq N$ and $\sigma\in R_i$, $\tau\in R_j$, then $\dist(\sigma,\tau)\geq\beta\theta n$.
\end{description}

\begin{theorem}\label{XThm_dRSB}
There are constants $k_0,\rho_0>0$ such that for $k\geq k_0$, $\rho_0\leq\rho\leq k\ln2-2\ln k$, and
	\begin{equation}\label{XeqdRSB}
	\frac{\rho}{\ln2}(1+2\rho^{-2})\leq k\theta\leq\exp\brk{\rho\bc{1-\frac{\ln\rho}\rho-\frac2\rho}}
	\end{equation}
the random formula $\PHI_t$ has the following properties \whp
\begin{enumerate}
\item In almost all $\sigma\in\cS(\PHI_t)$ at least $0.99\theta n$ variables are $\Omega(n)$-rigid.
\item There exist $\alpha=\alpha(k,\rho)>0,\beta=\beta(k,\rho)>0$ such that
		 $\cS(\PHI_t)$ is $(\alpha,\beta)$-shattered.
\item At least $\theta n/3$ variables $x\in V_t$ satisfy $M_x(\PHI_t)\in\brk{0.01,0.99}$.
\item The average distance of two random satisfying assignments is at least
		$0.49\theta n.$
\end{enumerate}
\end{theorem}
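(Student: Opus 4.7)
The proof rests on the key structural fact that Experiment~\ref{Exp_dec} applied to $\PHI$ outputs a uniform sample from $\cS(\PHI)$: hence, after conditioning on the partial assignment produced in $t$ steps, the pair $(\PHI_t,\SIGMA|_{V_t})$ is distributed as a \emph{planted} mixed-length $k$-CNF on the $\theta n$ variables of $V_t$. The plan is to first establish this distributional description, then port the shattering/rigidity machinery of~\cite{Barriers} from uniform-length random formulas to this mixed-length setting, and finally deduce the geometric consequences (3) and (4).

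I would begin with a Poisson-thinning calculation: for $1\leq j\leq k$, the number of length-$j$ clauses of $\PHI_t$ is asymptotically Poisson with mean of order $n\rho\binom{k}{j}\theta^j(1-\theta)^{k-j}2^{j-k}$, and conditional on these counts $\PHI_t$ is uniform among mixed-length planted CNFs with $\SIGMA|_{V_t}$ as the planted assignment. Part (1) then follows from a first-moment \emph{switch-set} argument in the spirit of~\cite{Barriers}: fix a typical planted $\sigma$ and bound the expected number of $\tau\in\cS(\PHI_t)$ with $\sigma(x)\neq\tau(x)$ and $\abs{\cbc{y:\sigma(y)\neq\tau(y)}}\leq\omega n$; the upper bound on $k\theta$ in~\eqref{XeqdRSB} produces enough short clauses that this expectation is $o(1)$ for some $\omega=\omega(k,\rho)>0$, so a uniformly random $x\in V_t$ is $\Omega(n)$-rigid with probability $1-o(1)$ and the ``almost all'' statement follows by averaging over $\sigma$.

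For part (2) I would define clusters as the connected components of the graph on $\cS(\PHI_t)$ that joins pairs at Hamming distance $\leq \gamma\theta n$ for a small $\gamma>0$; rigidity from part (1) then directly forces SH2 with separation $\beta\theta n$. For SH1, I would combine the same switch-set estimate, used now to bound the typical cluster size by $\exp\bc{(1-\alpha')\theta n}$, with a first-moment lower bound $|\cS(\PHI_t)|\geq\exp(s\theta n)$ guaranteed by the lower bound on $k\theta$ in~\eqref{XeqdRSB}, and take $\alpha=s-(1-\alpha')>0$. Parts (3) and (4) then follow by post-processing: SH1 implies two i.i.d.\ uniform samples of $\cS(\PHI_t)$ lie in different clusters with probability $1-o(1)$, giving the average-distance bound $\geq(1-o(1))\beta\theta n\geq 0.49\theta n$ after tuning $\gamma$; and the Nishimori-type $0/1$ symmetry of the planted model forces $\Erw M_x(\PHI_t)\approx 1/2$ coordinate-wise, which, combined with the concentration of $M_x$ inherited from the planted model, leaves at least $\theta n/3$ variables with $M_x(\PHI_t)\in\brk{0.01,0.99}$.

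The principal obstacle is adapting~\cite{Barriers}, which is designed for uniform length-$k$ random CNFs, to the mixed-length formula $\PHI_t$. Length-$1$ and length-$2$ clauses are particularly delicate: they propagate under unit-clause rules and could in principle cascade to fix $\Omega(n)$ variables, which would invalidate both moment computations. The upper bound on $k\theta$ in~\eqref{XeqdRSB} is calibrated so that the expected size of this cascade is $o(n)$, leaving a well-behaved residual mixed-CNF kernel. The switch-set first-moment expansion must then be reperformed with the per-clause satisfaction probabilities $2^{-j}$ weighted over the length distribution, yielding a rigidity criterion that is a weighted sum over $j=1,\ldots,k$; the two bounds in~\eqref{XeqdRSB} are precisely what is needed for the rigidity contribution and the entropy contribution to dominate simultaneously.
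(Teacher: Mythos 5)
Your proposal rests on a distributional claim that the paper explicitly disavows: it is \emph{not} true that the pair $(\PHI_t,\SIGMA_t)$ produced by the decimation process is distributed as a planted (mixed-length) CNF with $\SIGMA_t$ as planted assignment. Already at $t=0$, choosing a uniformly random satisfiable formula and then a uniformly random satisfying assignment (experiment {\bf U1}--{\bf U2}) induces a \emph{different} distribution on pairs than planting a random assignment and then a random formula satisfied by it (experiment {\bf P1}--{\bf P2}); satisfiable formulas with many solutions are over-weighted in the second model. The paper therefore works with a transfer theorem (\Thm~\ref{XCor_SATExchange}, quoted from~\cite{Barriers}): events of probability $\geq 1-\exp(-\rho n/2^k)$ in the planted model hold \whp\ in the uniform model. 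All first-moment estimates must be pushed below this exponential threshold, not merely to $o(1)$, before they transfer. Your Poisson-thinning description of $\PHI_t$ would be correct \emph{if} the planted description were valid, but since it is not, the thinning picture applies only to the auxiliary planted experiment {\bf P1}--{\bf P4}, and every conclusion drawn from it has to be strengthened and then carried across via the transfer theorem.

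This gap is fatal specifically to your argument for part (3). The ``Nishimori-type $0/1$ symmetry'' giving $\Erw\,M_x(\PHI_t)\approx1/2$ is a property of the genuine planted model, and even there it is an assertion about the \emph{mean} of $M_x$, not its concentration. It does not hold in the uniform model, and because it is not a $1-\exp(-\Omega(n))$-probability event, the transfer theorem cannot rescue it. The paper instead derives part (3) from part (4): it sets up an auxiliary graph on pairs $(x,\tau)$, joins $(x,\tau)$ to $(x,\tau')$ whenever $\tau(x)=\tau'(x)$, uses the average-distance bound to control the edge count, and then a double-counting argument shows that at most $\tfrac23\theta n$ variables can have marginals outside $\brk{0.01,0.99}$. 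A further point: for part (1), a bare first-moment bound on ``$\tau$ near $\sigma$ disagreeing with $\sigma$ on a fixed $x$'' cannot yield the $0.99\theta n$ threshold, because a constant fraction of variables are loose (they \emph{do} have nearby flippers), so that expectation is not small for a random $x$. The paper's route to rigidity is structural: it counts supporting clauses per variable (\Prop~\ref{XProp_Supp}), builds a self-contained set covering $(1-3\zeta)\theta n$ variables (\Prop~\ref{XProp_RandPoisson}), and invokes the small-set expansion property of $\PHI$ (\Lem~\ref{Lemma_exp_small}) to show any variable in such a set is $\chi n$-rigid. Your cluster decomposition for part (2) does track the paper's construction once the first-moment estimates are strengthened to the $\exp(-\rho n/2^k)$ level and the lower bound on $\abs{\cS(\PHI_t)}$ is supplied by \Cor~\ref{XCor_count}, so that part of the outline is sound modulo the transfer issue.
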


Thus, if the fraction $\theta$ of free variables lies in the regime~(\ref{XeqdRSB}),
then in most satisfying $\sigma\in\cS(\PHI_t)$ the values assigned to $99\%$ of the variables
are linked via long-range correlations: to `repair' the damage done by flipping a single rigid variable
it is inevitable to reassign a \emph{constant fraction} of all variables.
This is mirrored in the geometry of the set $\cS(\PHI_t)$: it decomposes into exponentially
many exponentially tiny subsets, which are mutually separated by a linear Hamming distance $\Omega(n)$.
Yet as in the symmetric phase, the marginals of a good fraction of the free variables remain bounded away from $0/1$,
and the set $\cS(\PHI_t)$ remains `well spread' over the Hamming cube $\cbc{0,1}^{V_t}$.

\Paragraph{The condensation phase.}
Let $\alpha>0$.
We say that a set $S\subset\cbc{0,1}^{\theta n}$ is {\em $\alpha$-condensed} if 
for any $\sigma,\tau\in S$ we have 
	$\dist(\sigma,\tau)\leq\alpha n$.

\begin{theorem}\label{XThm_cond}
There are constants $k_0,\rho_0>0$ such that for $k\geq k_0$, $\rho_0\leq\rho\leq k\ln2-2\ln k$, and
	\begin{equation}\label{Xeqcond}
	\ln\rho <k\cdot\theta<(1-\rho^{-2})\cdot\rho/\bc{\ln 2}
	\end{equation}
the random formula $\PHI_t$ has the following properties \whp
\begin{enumerate}
\item In almost all $\sigma\in\cS(\PHI_t)$ at least $0.99\theta n$ variables are $\Omega(n)$-rigid.
\item The set $\cS(\PHI_t)$ is $\exp(2-\rho)/k$-condensed.
\item At least $0.99\theta n$ variables $x\in V_t$ satisfy $M_x(\PHI_t)\in\brk{0,2^{-k/2}}\cup\brk{1-2^{-k/2},1}$.
\item There is a set $R\subset V_t$ of size $|R|\geq0.99\theta n$ such that
		for any $\sigma,\tau\in\cS(\PHI_t)$ we have
			$$\abs{\cbc{x\in R:\sigma(x)\neq\tau(x)}}\leq k2^{-k}n.$$
\end{enumerate}
\end{theorem}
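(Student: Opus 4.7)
The backbone of the proof is a coupling between the decimated formula $\PHI_t$ and a planted random formula on the set $V_t$ of free variables. Since the decimation process applied to a uniformly random satisfiable $\PHI$ outputs a uniformly random $\SIGMA\in\cS(\PHI)$, the joint distribution of $(\PHI,\SIGMA)$ is equivalent---modulo the concentration of $|\cS(\PHI)|$ that holds throughout $\rho\leq k\ln 2-2\ln k$---to the \emph{planted} distribution: sample $\SIGMA^*\in\cbc{0,1}^n$ uniformly, then sample $m$ clauses uniformly at random from those satisfied by $\SIGMA^*$. Revealing the first $t$ coordinates of $\SIGMA^*$ and substituting them into $\PHI^*$ yields a formula $\PHI_t$ on $V_t$ with a residual planted assignment $\SIGMA^*|_{V_t}$ uniform in $\cbc{0,1}^{V_t}$. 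In this ``reduced planted model'' each of the $m$ original clauses has either been deleted (if already satisfied) or shortened to length in $\{1,\ldots,k\}$, and the effective density scale is controlled by $k\theta$.

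With the coupling in hand, I would attack rigidity (1) and the frozen-set (4) by adapting the argument of \cite{Barriers}. Call a free variable $x\in V_t$ \emph{critical} in a clause $C$ of $\PHI_t$ if $x$ is the unique literal of $C$ satisfied by $\SIGMA^*$. In the planted model at $k\theta>\ln\rho$, a first-moment plus local-neighbourhood computation shows that whp all but an exponentially small fraction of $V_t$ consists of variables appearing critically in $\Omega(1)$ clauses whose remaining literals are themselves rigid, so flipping such a variable forces cascading reassignments over a linear number of variables. The set $R$ of part (4) can be taken to be this rigid core; any disagreement between two solutions restricted to $R$ must correspond to a local cluster flip, and the expected number of such flips is bounded by $k2^{-k}n$.

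The main obstacle is part (2), the condensation statement. Here I would compute in the planted model the expected number $Z(\alpha)$ of $\sigma\in\cS(\PHI_t)$ at Hamming distance $\alpha\theta n$ from $\SIGMA^*$, yielding a rate function $f(\alpha)$ whose behaviour over $\alpha\in(0,1/2]$ changes with $k\theta$: in the shattered regime $k\theta>\rho/\ln 2$ of \Thm~\ref{XThm_dRSB} an interior maximum dominates, whereas for $k\theta<(1-\rho^{-2})\rho/\ln 2$ the supremum is attained only as $\alpha\to 0^+$, with an exponential gap of order $\rho^{-2}\theta n$ separating it from the contribution at $\alpha\geq \exp(2-\rho)/k$. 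A Markov bound then forces $\cS(\PHI_t)$ to sit inside a single Hamming ball of radius $\exp(2-\rho)/k\cdot n$, which is the condensation claim. Executing the rate-function calculation at the precise threshold $(1-\rho^{-2})\rho/\ln 2$ is the delicate part: it requires adapting the annealed computation of \cite{Barriers} to the mixed-length decimated formula, in particular tracking how clauses of length $\ell<k$ affect the entropy contribution near $\alpha=0$.

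Parts (3) and (4) then fuse rigidity and condensation. Since $(1-o(1))\theta n$ variables are rigid in almost every $\sigma\in\cS(\PHI_t)$, and since all of $\cS(\PHI_t)$ fits inside a Hamming ball of radius $\exp(2-\rho)/k\cdot n=o(\theta n)$, any such rigid variable must take the same value on all but an exponentially small fraction of $\cS(\PHI_t)$. This pins down the marginal $M_x(\PHI_t)$ of every rigid $x$ to within $2^{-k/2}$ of $0$ or $1$, yielding (3), and the resulting set $R$ of essentially frozen variables satisfies both $|R|\geq 0.99\theta n$ and the disagreement bound $|\{x\in R:\sigma(x)\neq\tau(x)\}|\leq k2^{-k}n$ of part (4).
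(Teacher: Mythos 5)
Your high-level plan matches the paper's: pass to the planted model via the transfer theorem, prove rigidity through a ``self-contained''/critically-supported core, establish condensation by a first-moment rate function for solutions at Hamming distance $\alpha\theta n$ from the planted assignment, and then combine rigidity with condensation to control the marginals and the frozen set $R$. Parts (1) and (2) are essentially the paper's argument (the ``supremum attained only as $\alpha\to 0^+$'' framing is a slight detour; what is actually needed and what the paper proves is $\psi(\alpha)<-\rho/2^k$ uniformly for $\alpha>\exp(2-\rho)$, after which Markov plus the planted-to-uniform transfer finishes).

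However, there is a genuine gap in your proof of parts (3) and (4). You argue that since most variables are $\Omega(n)$-rigid and all of $\cS(\PHI_t)$ lies in a Hamming ball of radius $\exp(2-\rho)\,n/k$, a rigid variable ``must take the same value on all but an exponentially small fraction'' of solutions. This inference requires the condensation radius to be \emph{smaller} than the rigidity scale, and that comparison does not hold. The rigidity threshold $\chi(k)\,n$ comes from the expansion lemma for small sets, which yields $\chi\approx\bigl(4/(\eul k^2 2^k)\bigr)^4$, while the condensation diameter is $\exp(2-\rho)/k$, which for $\rho\le k\ln2-2\ln k$ is at least of order $k\,2^{-k}$; in particular the diameter vastly exceeds the rigidity scale, so rigidity w.r.t.\ $\sigma$ does \emph{not} rule out a nearby (in the condensation sense) $\tau$ that flips $x$. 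The paper closes this gap with an additional structural lemma (\Lem~\ref{Lemma_rigidity}) forbidding, \whp, ``bad'' sets of \emph{intermediate} size $2kn/2^k\le|Z|\le(\eul\rho)^{-4}\theta n$. Condensation caps the disagreement of any $\tau$ with $\sigma$ at $\exp(2-\rho)\theta n<(\eul\rho)^{-4}\theta n$, and this lemma then forces the disagreement restricted to the self-contained set to fall \emph{below} $2kn/2^k$. A bipartite double-counting over (rigid variable, solution) incidences then yields $M_x(\PHI_t)\le 2^{-k/2}$ for all but $0.01\theta n$ rigid variables; part (4) follows from the same intermediate-set lemma applied to the self-contained set. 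Your proposal asserts the bound $k2^{-k}n$ in part (4) but supplies no mechanism for it; that mechanism is precisely the intermediate-set first-moment bound that is missing from your sketch.
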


In other words, as the decimation process progresses to a point that the fraction $\theta$ of free variables satisfies~(\ref{Xeqcond}),
the set of satisfying assignments shrinks into a condensed subset of $\cbc{0,1}^{V_t}$ of tiny diameter, in contrast
to a well-spread shattered set as in \Thm~\ref{XThm_dRSB}.
Furthermore, most marginals $M_x(\PHI_t)$ are either extremely close to $0$ or extremely close to $1$.
In fact, there is a large set $R$ of variables on which all satisfying assignments
virtually agree (more precisely: any two can't disagree on more than $k2^{-k}n$
		variables in $R$).

\Paragraph{The forced phase.}
We call a variable $x$ \emph{forced} in the formula $\Phi_t$ if $\Phi_t$ has a clause that only 
contains the variable $x$ (a `unit clause').
Clearly, in any satisfying assignment $x$ must be assigned so as to satisfy this clause.

\begin{theorem}\label{XThm_forced}
There are constants $k_0,\rho_0>0$ such that for $k\geq k_0$, $\rho_0\leq\rho\leq k\ln2-2\ln k$, and
	\begin{equation}\label{Xeqforced}
		1/n\ll k\cdot\theta<\ln(\rho)(1-10/\ln\rho)
	\end{equation}
the random formula $\PHI_t$ has the following properties \whp
\begin{enumerate}
\item At least $0.99\theta n$ variables are forced.
\item The set $\cS(\PHI_t)$ is $\exp(2-\rho)/k$-condensed.
\end{enumerate}
\end{theorem}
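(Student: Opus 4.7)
The plan is to dispatch the two statements via different routes: statement~(2) will follow from \Thm~\ref{XThm_cond} by monotonicity in $t$, while statement~(1) requires a first-moment computation of the number of unit clauses in $\PHI_t$, carried out in a planted model and then transferred to the decimation dynamics.

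For statement~(2), I would pick $\theta_0$ with $k\theta_0$ just above $\ln\rho$, say $k\theta_0=\ln\rho+1$, and set $t_0=\lceil(1-\theta_0)n\rceil$, so that the condensation-phase hypothesis of \Thm~\ref{XThm_cond} is satisfied at time $t_0$. The hypothesis $k\theta<\ln\rho(1-10/\ln\rho)$ then forces $t>t_0$. Every $\tau\in\cS(\PHI_t)$ is the restriction to $V_t$ of some $\tilde\tau\in\cS(\PHI_{t_0})$ that agrees with the already-committed choices $\sigma(x_{t_0+1}),\ldots,\sigma(x_t)$ on $V_{t_0}\setminus V_t$. Any two such extensions $\tilde\tau,\tilde\tau'$ coincide on $V_{t_0}\setminus V_t$, so their Hamming distance equals that of their restrictions to $V_t$. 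Hence the diameter of $\cS(\PHI_t)$ is bounded by that of $\cS(\PHI_{t_0})$, which by \Thm~\ref{XThm_cond}(2) is at most $\exp(2-\rho)n/k$ \whp, giving statement~(2).

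For statement~(1), I would compute the expected number of unit clauses in $\PHI_t$. The key distributional fact is that the joint law of $(\PHI,\SIGMA)$ with $\SIGMA$ uniform in $\cS(\PHI)$ is contiguous with the planted model, in which $\sigma^\star$ is drawn uniformly and then each clause is drawn uniformly among those satisfied by $\sigma^\star$; this is the technical backbone of~\cite{Barriers,BPdec} and of the other theorems in this excerpt. Under the planted law, a single clause becomes a unit clause in $\PHI_t$ when (i)~exactly one of its $k$ variables lies in $V_t$, an event of probability $(1+o(1))k\theta(1-\theta)^{k-1}$, and (ii)~the surviving literal is forced true by $\sigma^\star$ while the other $k-1$ literals are false, an event of conditional probability $1/(2^k-1)$ given the sign-pattern is uniform among satisfying ones. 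Summing over the $m=\lceil\rho\cdot 2^k n/k\rceil$ clauses and using $(1-\theta)^{k-1}\sim\eul^{-k\theta}$ for $\theta=o(1)$, the expected number of unit clauses is
\begin{equation*}
(1+o(1))\,\rho n\theta\eul^{-k\theta}.
\end{equation*}
The hypothesis $k\theta<\ln\rho(1-10/\ln\rho)$ gives $\eul^{-k\theta}\geq\eul^{10}/\rho$, so this expectation is at least $\eul^{10}\theta n$.

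The chief obstacle is twofold: first, promoting this expectation to a high-probability lower bound on the number of \emph{distinct} forced variables; second, justifying the transfer from the planted model to the decimation dynamics. For the former, a second-moment argument on the unit-clause count $U$ suffices because two random clauses share a variable with probability $O(k^2/n)$, so $\Var(U)=o(\Erw[U]^2)$; then a balls-into-bins computation, using that by symmetry the surviving variable in each unit clause is essentially uniform over $V_t$, shows the expected fraction of \emph{unforced} free variables is at most $(1+o(1))\exp(-U/(\theta n))\leq\exp(-\eul^{10})\ll 0.01$, hence $|\{\text{forced variables}\}|\geq 0.99\theta n$ \whp\ For the latter, the contiguity between the uniform-on-$\cS(\PHI)$ measure and the planted measure, controlled by the second-moment machinery of~\cite{Barriers} in the condensation regime, allows counting statistics such as $U$ to be transferred between the two models with vanishing relative error, completing the proof.
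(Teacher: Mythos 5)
Your part~(2) argument is correct and essentially identical to the paper's: the paper's proof of \Thm~\ref{Thm_shape}, part~2 also fixes a reference time $t_0$ in the condensation regime, invokes \Prop~\ref{Prop_cond}/\Lem~\ref{Lemma_cond} there, and observes that the diameter of $\cS(\PHI_t)$ can only shrink as $t$ increases past $t_0$. The only difference is that you enter the condensation window from the low end ($k\theta_0=\ln\rho+1$) while the paper anchors at the high end ($\theta_0=(1-1/\rho^2)\rho/(k\ln 2)$); both work.

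For part~(1) the first-moment calculation in the planted model is on the right track, but the proposal has a genuine gap in how the result is transferred back to the decimation dynamics, and the gap is not cosmetic. You invoke ``contiguity'' between the uniform-on-$\cS(\PHI)$ measure and the planted measure, with a second-moment argument giving $\Var(U)=o(\Erw[U]^2)$. This is not what \Thm~\ref{XCor_SATExchange} / \Cor~\ref{Cor_Pnm'} provide, and it is not true: the two distributions are \emph{not} contiguous (in the condensed regime the planted assignment is atypical among satisfying assignments). The actual transfer statement is strongly asymmetric and demands that the event in question hold under the planted law with probability at least $1-\exp(-\Theta(n))$. A second-moment/Chebyshev bound only yields failure probability $o(1)$, which is nowhere near strong enough. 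The paper instead works in $\cP_k'(n,m)$ precisely because there the per-variable counts $F_x$ of forcing clauses are \emph{independent} binomials with mean bounded below; $\pr[F_x=0]\le\exp(-\exp(5))$, and a Chernoff bound on the number of variables with $F_x=0$ gives the required $\exp(-\Omega(n))$ tail directly (\Lem~\ref{Lemma_forced}). Your balls-into-bins heuristic computes the \emph{expected} fraction of unforced variables but does not supply any concentration, let alone exponential concentration, so it cannot feed into the transfer corollary.

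Related to this, your treatment of small $\theta$ (the range $1/n\ll\theta\to0$) is incomplete. The exponential bound required by the transfer corollary comes out as $\exp(-c\theta n)$, which is far too weak when $\theta n=o(n)$. The paper therefore applies the transfer only at a fixed $\theta_0\sim(\ln\rho-10)/k$, where $\theta_0 n=\Theta(n)$, and then extends to smaller $\theta$ by noting that a variable forced at time $t_0$ remains forced at any $t>t_0$ and that, by symmetry, the set of forced variables is uniformly distributed over $V_{t_0}$; a hypergeometric concentration bound (using $\theta n\gg1$) then yields $|\cF\cap V_t|\ge 0.99\theta n$ \whp. This two-step structure (exponential bound at a fixed macroscopic $\theta_0$, then monotonicity + hypergeometric for smaller $\theta$) is exactly what your proposal is missing, and without it the argument as written would not close at the extremes of the stated range~(\ref{Xeqforced}).
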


\Paragraph{Belief Propagation.}
As mentioned earlier, the BP decimation algorithm is an attempt at implementing the decimation process
by means of an efficient algorithm.
The key issue with this is the computation of the marginals $M_{x_t}(\Phi_{t-1})$ in step~2 of the decimation process.
Indeed, the problem of computing these marginals is $\#P$-hard in the worst case.
Thus, instead of working with the `true' marginals, BP decimation uses certain numbers
	$\mu_{x_t}(\Phi_{t-1},\omega)$ that can be computed efficiently, where $\omega\geq1$ is an integer parameter.
The precise definition of the $\mu_{x_t}(\Phi_{t-1},\omega)$ can be found in Appendix~\ref{Apx_BP}
	(or~\cite{BMZ}).
Basically, they are the result of a `local' dynamic programming algorithm (`Belief Propagation')
	that depends upon the assumption of a certain correlation decay property.
For given $k,\rho$, the key hypothesis underpinning the BP decimation algorithm is 

\begin{hypothesis}\label{XHyp_BP}
For any $\eps>0$ there is $\omega=\omega(\eps,k,\rho,n)\geq1$ such that \whp\
for all $1\leq t\leq n$ we have
	$\abs{\mu_{x_t}(\PHI_{t-1},\omega)-M_{x_t}(\PHI_{t-1})}<\eps.$
\end{hypothesis}

In other words, Hypothesis~\ref{XHyp_BP} states that throughout the decimation process,
the `BP marginals' $\mu_{x_t}(\Phi_{t-1},\omega)$ are a good approximation to the true marginals $M_{x_t}(\Phi_{t-1})$.

\begin{theorem}\label{XThm_NoBP}
There exist constants $c_0,k_0,\rho_0>0$ such that for all $k\geq k_0$, and
$\rho_0\leq\rho\leq k\ln2-2\ln k$
the following is true for any integer $\omega=\omega(k,\rho,n)\geq1$.
Suppose that
	\begin{equation}\label{XeqNoBP}
	c_0\ln(\rho)<k\cdot\theta<\rho/\ln2.
	\end{equation}
Then for at least $0.99\theta n$ variables $x\in V_t$ we have
	$\mu_x(\PHI_t,\omega)\in\brk{0.49,0.51}.$
\end{theorem}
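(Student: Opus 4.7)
My approach exploits the locality of BP together with an exact sign-flip symmetry of the random formula $\PHI_t$: the BP marginal $\mu_x(\PHI_t,\omega)$ is a deterministic function of the depth-$\omega$ factor-graph neighborhood (BP computation tree) rooted at $x$, and flipping every occurrence of the literal $x$ inside this neighborhood sends $\mu_x$ to $1-\mu_x$. The plan is to show that the law of this neighborhood is flip-invariant for $(1-o(1))\theta n$ variables $x$, and then that BP quantitatively concentrates the output around $1/2$.

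For the symmetry, I use the planted-model viewpoint on $\PHI_t$. The decimation process induces the same joint law as sampling $(\PHI,\SIGMA)$ uniformly from $\cbc{(\Phi,\sigma):\sigma\in\cS(\Phi)}$ and then substituting $\SIGMA|_{\cbc{x_1,\ldots,x_t}}$. For any $x\in V_t$, the involution that simultaneously flips every literal $x$ inside $\PHI$ and the value $\SIGMA(x)$ is a measure-preserving bijection commuting with this substitution, so the law of $\PHI_t$ is invariant under flipping all occurrences of the literal $x$ in its clauses. Hence $\mu_x(\PHI_t,\omega)\stackrel{d}{=}1-\mu_x(\PHI_t,\omega)$.

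Upgrading this distributional equality to the pointwise bound $\mu_x\in\brk{0.49,0.51}$ requires a density-evolution argument on the BP recursion. In the regime~(\ref{XeqNoBP}) the clause-length profile of $\PHI_t$ is concentrated away from length $1$, so each clause-to-variable message in one BP iteration contributes a bias of magnitude only $O(2^{-k})$; aggregated over the expected degree $O(\rho)$ of $x$ in $\PHI_t$, the total fluctuation of $\mu_x$ around $1/2$ is $O(\sqrt{\rho}\cdot 2^{-k})$, easily below $0.01$ for $k$ large. Standard first-moment estimates handle the few short cycles and atypical neighborhoods, and iterating the contraction over the $\omega$ BP layers---the symmetric fixed point being attractive in this regime---preserves the bound; a union bound over the exceptional $o(\theta n)$ variables yields the claimed $0.99\theta n$ count.

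The hardest part, I think, is reconciling the pointwise concentration with the polarization established in Theorem~\ref{XThm_cond}: the true marginals $M_x(\PHI_t)$ are essentially $\cbc{0,1}$-valued in the condensation regime, so \emph{some} feature of $\PHI_t$ must distinguish $x$ from $\neg x$ in a typical realisation. The content of Theorem~\ref{XThm_NoBP} is precisely that this distinguishing feature is genuinely non-local---invisible at any propagation depth, including the loopy regime where $\omega$ is large enough that the BP neighborhood reaches a positive fraction of $\PHI_t$. Establishing stability of the symmetric BP fixed point throughout the range~(\ref{XeqNoBP}) is likely the delicate step, requiring the planted-model moment techniques from~\cite{Barriers} that underpin the earlier sections of this paper, with particular care for the contribution of the few short clauses produced by decimation that could in principle derail the density-evolution contraction.
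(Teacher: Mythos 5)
Your high-level intuition---that BP on $\PHI_t$ hovers around the symmetric fixed point $\tfrac12$, that locality is the reason, and that the key is a contraction argument immune to the few atypical clauses---matches the spirit of the paper's proof. But there are two substantive problems with the proposal.

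First, the symmetry observation in your second paragraph, while correct, is a dead end for this theorem. It yields only the distributional identity $\mu_x(\PHI_t,\omega)\stackrel{d}{=}1-\mu_x(\PHI_t,\omega)$, which is consistent with $\mu_x$ being $\{0,1\}$-valued with equal probability and therefore does nothing toward a pointwise bound $\mu_x\in[0.49,0.51]$. The paper does not use this symmetry at all; it proves pointwise concentration directly via deterministic quasirandomness conditions.

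Second, and more seriously, the density-evolution sketch in your third paragraph contains incorrect parameter scalings and, even if corrected, does not constitute a proof. In $\PHI_t$ the typical clause length is $\theta k$ (not $k$), so a single clause-to-variable message deviates from $\tfrac12$ by roughly $2^{-\theta k}$, not $O(2^{-k})$; and the typical degree of a free variable in $\PHI_t$ is on the order of $2^{\theta k}\rho$, not $O(\rho)$, since a clause survives decimation with probability about $2^{-(1-\theta)k}$. The correct back-of-envelope estimate is $\sqrt{2^{\theta k}\rho}\cdot 2^{-\theta k}=\sqrt{\rho}\,2^{-\theta k/2}$, which only becomes small once $k\theta>c_0\ln\rho$---this explains why the lower bound in~(\ref{XeqNoBP}) must appear. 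More importantly, the aggregation over $\omega$ BP rounds, the effect of short cycles, and the control over heavy-tailed local neighborhoods are not handled by ``standard first-moment estimates.'' The paper's proof depends on the full machinery of~\cite{BPdec}: the quasirandomness conditions {\bf Q0}--{\bf Q4} (including a cut-norm bound on the signed adjacency operator $\Lambda_Q$ in {\bf Q4}), \Thm~\ref{Thm_dynamics} which deterministically converts quasirandomness into $(\delta_t,t)$-balancedness, \Prop~\ref{Prop_reason} establishing quasirandomness for the planted model with exponentially small failure probability, and the transfer theorem \Cor~\ref{XCor_SATExchange} to move from the planted model to the decimation process. Your proposal identifies the right obstacle (``establishing stability of the symmetric BP fixed point\ldots is likely the delicate step'') but leaves precisely that step---the entire content of the theorem---undone.
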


The proof 
is based on the techniques developed in~\cite{BPdec};
	the details are omitted from this extended abstract.\footnote{In the appendix we indicate how \Thm~\ref{XThm_NoBP}
			follows from the results of~\cite{BPdec}.}
Comparing \Thm~\ref{XThm_cond} with \Thm~\ref{XThm_NoBP}, we see that \whp\ for $\theta$
satisfying~(\ref{XeqNoBP}) most of the `true' marginals $M_x(\PHI_t)$
are very close to either $0$ or $1$,
whereas the `BP marginals' lie in $\brk{0.49,0.51}$.
Thus, in the regime described by~(\ref{XeqNoBP}) the BP marginals
do \emph{not} provide a good approximation to the actual marginals.

\begin{corollary}
There exist constants $c_0,k_0,\rho_0>0$ such that for all $k\geq k_0$,
	$\rho_0\leq\rho\leq k\ln2-3\ln k$
 Hypothesis~\ref{XHyp_BP} is untrue.
\end{corollary}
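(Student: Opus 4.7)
The plan is to combine \Thm~\ref{XThm_cond} and \Thm~\ref{XThm_NoBP} at a time $t$ where both theorems apply simultaneously. Enlarging $c_0$ if necessary so that $c_0\geq 1$, the ranges~(\ref{Xeqcond}) and~(\ref{XeqNoBP}) both contain the interval
$$c_0\ln\rho\leq k\theta\leq(1-\rho^{-2})\rho/\ln 2,$$
which is non-empty for all sufficiently large $\rho$. The corollary's bound $\rho\leq k\ln 2-3\ln k$ is stronger than what either theorem requires, so both are available. Pick any $\theta$ in this common regime, set $t^\star=\lfloor(1-\theta)n\rfloor$ so that $|V_{t^\star}|=\theta n+O(1)$, and let $\omega=\omega(k,\rho,n)\geq 1$ be arbitrary.

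Applied to $\PHI_{t^\star}$, part~3 of \Thm~\ref{XThm_cond} yields that \whp\ at least $0.99\theta n$ variables $x\in V_{t^\star}$ satisfy
$$M_x(\PHI_{t^\star})\in[0,2^{-k/2}]\cup[1-2^{-k/2},1],$$
while \Thm~\ref{XThm_NoBP}, whose conclusion holds uniformly in $\omega$, gives \whp\ at least $0.99\theta n$ variables $x\in V_{t^\star}$ with $\mu_x(\PHI_{t^\star},\omega)\in[0.49,0.51]$. A union bound thus produces, \whp, at least $0.98\theta n$ variables $x\in V_{t^\star}$ in both sets, each of which satisfies
$$\abs{\mu_x(\PHI_{t^\star},\omega)-M_x(\PHI_{t^\star})}\geq 0.49-2^{-k/2}\geq 0.4,$$
provided $k_0$ is chosen so that $2^{-k/2}\leq 0.09$ for $k\geq k_0$.

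To upgrade ``at least $0.98\theta n$ variables'' into a statement about the specific next-to-be-decimated variable $x_{t^\star+1}$, I would invoke the exchangeability of $\PHI_{t^\star}$ on $V_{t^\star}$. Since $\PHI$ is symmetric under permutations of $V$, and since the marginal $M_{x_s}(\PHI_{s-1})$ computed at step~$s$ of Experiment~\ref{Exp_dec} depends only on the structure of $\PHI_{s-1}$ (it is invariant under relabellings of $V_s\setminus\{x_s\}$), permuting the labels in $V_{t^\star}$ before launching the process leaves the joint law of $\PHI_{t^\star}$ unchanged. Averaging the indicator ``$\abs{\mu_x-M_x}\geq 0.4$'' over $x\in V_{t^\star}$ then delivers
$$\pr\brk{\abs{\mu_{x_{t^\star+1}}(\PHI_{t^\star},\omega)-M_{x_{t^\star+1}}(\PHI_{t^\star})}\geq 0.4}\geq 0.98-o(1).$$
As this bound is valid for every $\omega\geq 1$, the conclusion of Hypothesis~\ref{XHyp_BP} with $\eps=0.4$ fails for every admissible $\omega$, thereby disproving the hypothesis.

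The step I expect to demand the most care is this exchangeability of $\PHI_{t^\star}$ on $V_{t^\star}$: because the ordered decimation explicitly references the labels $x_1,\ldots,x_{t^\star}$, one has to verify that nothing in the first $t^\star$ rounds breaks symmetry among the remaining variables. Granted that, the corollary is merely the juxtaposition of the condensation theorem with the BP-failure theorem inside their common regime of $\theta$.
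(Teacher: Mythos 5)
Your proposal is correct and follows the same route the paper sketches: intersecting the condensation regime of \Thm~\ref{XThm_cond} (most true marginals near $0$ or $1$) with the regime of \Thm~\ref{XThm_NoBP} (most BP marginals near $1/2$) to produce a $\theta$ at which the two estimates disagree by a constant. The paper leaves the passage from ``$0.98\theta n$ variables in $V_{t^\star}$ disagree'' to ``the next-to-be-decimated variable $x_{t^\star+1}$ disagrees with constant probability'' implicit; you correctly identify that this needs exchangeability of $\PHI_{t^\star}$ over $V_{t^\star}$, and your justification of that exchangeability (via the invariance of the law of $\PHI$ and of the marginals $M_{x_s}(\cdot)$, $s\le t^\star$, under permutations of $V_{t^\star}$ --- or, even more directly, via the \textbf{U1}--\textbf{U4} characterization of $\PHI_{t^\star}$ in Fact~\ref{XFact_UD}) is sound. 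The numerical check $0.49-2^{-k/2}\ge 0.4$ for $k\ge k_0$ is also fine.
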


\Paragraph{Summary and discussion.}
Fix $k\geq k_0$ and $\rho\geq\rho_0$.
\Thm s~\ref{XThm_sym}--\ref{XThm_forced} show how the space of satisfying assignments
of $\PHI_t$ evolves as the decimation process progresses.
In the \emph{symmetric phase} $k\theta\geq\exp((1+o_\rho(1))\rho)$ where there still is a large number
of free variables, the correlations amongst the free variables are purely local (`loose variables').
As the number of free variables enters the regime
	$(1+o_\rho(1))\rho/\ln2\leq k\theta\leq\exp((1-o_\rho(1))\rho)$,
the set $\cS(\PHI_t)$ of satisfying assignments shatters into exponentially many tiny `clusters',
each of which comprises only an exponentially small fraction of all satisfying assignments.
Most satisfying assignments exhibit long-range correlations amongst the possible values that
can be assigned to the individual variables (`rigid variables').
This phenomenon goes by the name of \emph{dynamic replica symmetry breaking} in statistical mechanics~\cite{pnas}.

\enlargethispage{15mm}

While in the previous phases the set of satisfying assignments is scattered all over the Hamming
cube (as witnessed by the average Hamming distance of two satisfying assignments),
in the \emph{condensation phase} $(1-o_\rho(1))\ln\rho\leq k\theta\leq(1-o_\rho(1))\rho/\ln2$ the set of satisfying assignments has a tiny diameter.
This is mirrored by the fact that the marginals of most variables are extremely close to either $0$ or $1$.
Furthermore, in (most of) this phase the estimates of the marginals resulting from Belief Propagation are off (\Thm~\ref{XThm_NoBP}).
As part~4 of \Thm~\ref{XThm_cond} shows, the mistaken estimates of the Belief Propagation computation
would make it impossible for BP decimation to penetrate the condensation phase.
More precisely, even if BP decimation would emulate
	the decimation process perfectly up until the condensation phase commences,
	with probability $1-\exp(-\Omega(n))$ BP decimation would then assign
	at least $k2^{-k}n$ variables in the set $R$ from part~4 of \Thm~\ref{XThm_cond}
	`wrongly' (i.e., differently than they are assigned in any satisfying assignment).
In effect, BP decimation would fail to find a satisfying assignment,
	regardless of its subsequent decisions.
Finally, in the forced phase $k\theta\leq(1-o_\rho(1))\ln\rho$ there is an abundance of unit clauses
that  make it easy to read off the values of most variables.
However, getting stuck in the condensation phase, BP decimation won't reach this regime.

These results suggest that the reason for the failure of BP decimation is the existence of the condensation phase.
Intuitively, in the condensation phase the marginals are governed by genuinely global phenomena
(essentially expansion properties) that elude the inherently local  BP computation.
By contrast, it is conceivable that BP does indeed yield the correct marginals in the previous phases.
Verifying or falsifying this remains an important open problem.

\section{Related work}

\noindent{\bf The statistical mechanics perspective.}
BP/SP decimation are inspired by a generic
but highly non-rigorous analysis technique from statistical mechanics called the \emph{cavity method}~\cite{BMZ}.
This technique is primarily destined for the \emph{analysis} of phase transitions.
It is based on the (unproven) \emph{replica symmetry breaking hypothesis}, which aims to characterize
the possible types of correlations amongst the variables~\cite{pnas}.

In~\cite{BMZ,pnas} the cavity method was used to study the structure of the set $\cS(\PHI)$ of satisfying assignments
(or, more accurately, properties of the Gibbs measure) of the \emph{undecimated} random formula $\PHI$.
Thus, the results obtained in that (non-rigorous) work identify phase transitions solely in terms of the
formula density $\rho$.
On the basis of these results, it was hypothesized that (certain versions of) BP decimation
should find satisfying assignments up to $\rho\sim \ln k$ or even up to $\rho\sim k\ln2$~\cite{pnas}.
The argument given for the latter scenario in~\cite{pnas} is that the key obstacle for
BP to approximate the true marginals is \emph{condensation}.
In terms of the parameter $\rho$, the condensation threshold was (non-rigorously) estimated to occur at
	$\rho=k\ln2-3k2^{-k-1}\ln2$.
However, \cite{BPdec} shows that (the basic version of) BP decimation
fails to find satisfying assignments already for $\rho\geq\rho_0$, with $\rho_0$ a constant independent of $k$.

The explanation for this discrepancy is that~\cite{BMZ,pnas} neglect the time parameter $\theta=1-t/n$ of the decimation process.
As \Thm~\ref{XThm_cond} shows, even for \emph{fixed} $\rho\geq\rho_0$ (independent of $k$) condensation occurs as the decimation
process proceeds to $\theta$ in the regime~(\ref{Xeqcond}).
This means that decimating variables has a similar effect on the geometry of the set of satisfying assignments
as increasing the clause/variable density.
On a non-rigorous basis an analysis both in terms of the formula density $\rho$ and the
time parameter $\theta$ was carried out in~\cite{RTS}.
Thus, our results can be viewed as a rigorous version of~\cite{RTS}
	(with proofs based on completely different techniques).
In addition, \Thm~\ref{XThm_NoBP} confirms rigorously that for $\rho,\theta$ in the condensation phase, BP
does not yield the correct marginals.

The present results have no immediate bearing on the conceptually more sophisticated SP decimation algorithm.
However, we conjecture that SP undergoes a similar sequence of phase transitions
and that the algorithm will not find satisfying assignments for densities $\rho\geq\rho_0$,
with $\rho_0$ a certain constant independent of $k$.

\noindent{\bf Rigorous work.}
\Thm~\ref{XThm_dRSB} can be viewed as a generalization of the results on random $k$-SAT obtained in~\cite{Barriers}
	(which additionally deals with further problems such as random graph/hypergraph coloring).
In~\cite{Barriers} we rigorously proved a substantial part of the results hypothesized in~\cite{pnas} on shattering
and rigidity 
in terms of the clause/variable density $\rho$;
this improved prior work~\cite{SolSpace,Frozen,Daude}.
The new aspect of the present work is that we identify not only a transition for shattering/rigidity, but also for condensation
and forcing in terms of \emph{both} the density $\rho$ and the time parameter $\theta$ of the decimation process.
As explained in the previous paragraph, the time parameter is crucial to link these phase transitions
to the performance of algorithms such as BP decimation.

In particular, from \Thm~\ref{XThm_dRSB} we can recover the main result of~\cite{Barriers} on random $k$-SAT.
Namely, if $\rho\geq\ln k+2\ln\ln k+2$, then (\ref{XeqdRSB}) is satisfied even for $\theta=1$, i.e.,
the \emph{undecimated} random formula $\PHI$ has the properties 1.--4.\ stated in \Thm~\ref{XThm_dRSB}.
Technically, the present paper builds upon the methods developed in~\cite{Barriers}.
In addition, new arguments are needed to accommodate the time parameter $\theta$, to
prove the statements on the marginals of the variables
in \Thm s~\ref{XThm_sym}--\ref{XThm_cond}, and to establish the condensation phenomenon (\Thm~\ref{XThm_cond}).

The best current rigorous algorithmic results for random $k$-SAT are~\cite{BetterAlg,FrSu,HajiSorkin,KKL}.
For general $k$, the best current algorithm succeeds up to $\rho\sim\ln k$~\cite{BetterAlg}.

\section{Analyzing the decimation process}\label{Sec_planted}

In the rest of the paper, we are going to sketch the proofs of the main results.\footnote{Full proofs can be found in the appendix.}
In this section we perform some groundwork to facilitate a rigorous analysis of the decimation process.
The key problem is to get a handle on the following experiment:
\begin{description}
\item[D1.] Generate a random formula $\PHI$, conditioned on $\PHI$ being satisfiable.
\item[D2.] Run the decimation process for $t$ steps to obtain $\PHI_t$.
\item[D3.] Choose a satisfying assignment $\SIGMA_t\in\cS(\PHI_t)$ uniformly at random.
\item[D4.] The result is the pair $(\PHI_t,\SIGMA_t)$.
\end{description}
As throughout the paper we only work with densities $m/n$ where $\PHI$ is satisfiable
\whp, the conditioning in step~{\bf D1} is essentially void.
Recalling that the outcome of the decimation process is a uniformly random satisfying assignment
of $\PHI$, we see that the following experiment is equivalent to {\bf D1}--{\bf D4}:
\begin{description}
\item[U1.] Generate a random formula $\PHI$, conditioned on $\PHI$ being satisfiable.
\item[U2.] Choose $\SIGMA\in\cS(\PHI)$ uniformly at random.
\item[U3.] Substitute $\SIGMA(x_i)$ for $x_i$ for $1\leq i\leq t$ and simplify
		to obtain a formula $\PHI_t$.
\item[U4.] The result is the pair $(\PHI_t,\SIGMA_t)$, where 
			$\SIGMA_t:V_t\ra\cbc{0,1},\ x\mapsto\SIGMA(x).$
\end{description}

\begin{fact}\label{XFact_UD}
The two probability distributions induced on formula/assignment pairs by the two experiments
{\bf D1}--{\bf D4} and {\bf U1}--{\bf U4} are identical.
\end{fact}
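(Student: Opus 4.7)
The plan is to reduce both experiments to the same explicit formula for $\pr[\PHI_t=\Psi,\SIGMA_t=\sigma]$. The central tool is the standard fact that on any fixed satisfiable input $\Phi$ the decimation process of Experiment~\ref{Exp_dec} produces a uniformly random element of $\cS(\Phi)$. What is actually needed is a partial-trajectory version tailored to steps D2--D3: for any partial assignment $\tau:\cbc{x_1,\ldots,x_t}\ra\cbc{0,1}$, the probability that the first $t$ iterations assign $x_i\mapsto\tau(x_i)$ for every $i\le t$ equals $|\cS(\Phi_t^\tau)|/|\cS(\Phi)|$, where $\Phi_t^\tau$ denotes the deterministic formula obtained from $\Phi$ by substituting $\tau$ and simplifying.

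To prove this identity I would unroll the conditional probabilities in the order $i=1,\ldots,t$. Conditional on the first $i-1$ choices equalling $\tau(x_1),\ldots,\tau(x_{i-1})$, the intermediate formula $\Phi_{i-1}$ is deterministically $\Phi_{i-1}^\tau$, and by the very definition of $M_{x_i}(\Phi_{i-1}^\tau)$ the probability of producing $\tau(x_i)$ next equals $|\cS(\Phi_i^\tau)|/|\cS(\Phi_{i-1}^\tau)|$ in both cases $\tau(x_i)=1$ and $\tau(x_i)=0$ (the latter using that $\cS(\Phi_{i-1}^\tau)$ splits according to the value of $x_i$). Multiplying these factors over $i$ telescopes to the claimed $|\cS(\Phi_t^\tau)|/|\cS(\Phi)|$; specializing to $t=n$ recovers uniformity for the full run.

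Given this lemma the rest is bookkeeping. In experiment \textbf{D}, I would decompose $\pr_{\mathbf{D}}[\PHI_t=\Psi,\SIGMA_t=\sigma]$ as a sum over pairs $(\Phi,\tau)$ with $\Phi_t^\tau=\Psi$. The lemma contributes a factor $|\cS(\Psi)|/|\cS(\Phi)|$ for the decimation matching $\tau$, and step~D3 contributes an independent factor $1/|\cS(\Psi)|$ for the uniform choice of $\SIGMA_t\in\cS(\Psi)$; the two $|\cS(\Psi)|$ cancel and one obtains
\begin{equation*}
\pr_{\mathbf{D}}[\PHI_t=\Psi,\SIGMA_t=\sigma]=\sum_{\Phi,\tau:\,\Phi_t^\tau=\Psi}\frac{\pr[\PHI=\Phi\mid\PHI\text{ satisfiable}]}{|\cS(\Phi)|},
\end{equation*}
valid whenever $\sigma\in\cS(\Psi)$ (and $0$ otherwise). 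In experiment \textbf{U} the same probability is a sum over $(\Phi,\tilde\sigma)$ with $\tilde\sigma\in\cS(\Phi)$; reparameterizing $\tilde\sigma\mapsto(\tau,\sigma)$ by splitting at $x_t$ and using that under $\Phi_t^\tau=\Psi$ the condition $\tilde\sigma\in\cS(\Phi)$ is equivalent to $\sigma\in\cS(\Psi)$, one arrives at the identical sum. I do not anticipate a genuine obstacle: the whole argument is an exercise in unwinding definitions, and the only pitfall is remembering that the probability of any partial decimation trajectory depends solely on the number of its completions to a full satisfying assignment of $\Phi$, not on the detailed sequence of marginals computed along the way.
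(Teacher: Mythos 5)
Your proof is correct and matches the paper's (implicit) reasoning: the paper simply observes that the decimation process on a fixed satisfiable $\Phi$ outputs a uniform element of $\cS(\Phi)$ and declares the equivalence by ``a moment's reflection,'' while you make this precise by proving the telescoping partial-trajectory identity $\pr[\text{first }t\text{ steps yield }\tau]=|\cS(\Phi_t^\tau)|/|\cS(\Phi)|$ and then matching the two joint distributions term by term. The bookkeeping is sound, including the key points that $\{\tilde\sigma\in\cS(\Phi):\tilde\sigma(x_i)=\tau(x_i)\text{ for }i\le t\}$ is in bijection with $\cS(\Phi_t^\tau)$ and that the decimation process never leaves the set of satisfiable subformulas, so the telescoping factors are well-defined whenever the trajectory has positive probability.
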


Still, an analysis of {\bf U1}--{\bf U4} seems difficult because of {\bf U2}:
	it is unclear how to analyze (or implement) this step directly. 
Following~\cite{Barriers},
we will surmount this problem by considering yet another experiment. 

\begin{description}
\item[P1.] Choose an assignment $\SIGMA'\in\cbc{0,1}^V$ uniformly at random.
\item[P2.] Choose a formula $\PHI'$ with $m$ clauses that is satisfied by $\SIGMA'$ uniformly at random.
\item[P3.] Substitute $\SIGMA'(x_i)$ for $x_i$ for $1\leq i\leq t$ and simplify
			to obtain a formula $\PHI'_t$.
\item[P4.] The result is the pair $(\PHI'_t,\SIGMA'_{t})$, where 
			$\SIGMA'_{t}:V_t\ra\cbc{0,1},\ x\mapsto\SIGMA'(x).$
\end{description}

The experiment {\bf P1}--{\bf P4} is easy to implement and, in effect, also amenable to a rigorous analysis.
For given the assignment $\SIGMA'$, there are $(2^k-1)\bink nk$ clauses in total that evaluate to `true' under $\SIGMA'$, and
to generate $\PHI'$ we merely  choose $m$ out of these uniformly and independently.
Unfortunately, it is \emph{not} true that the experiment {\bf P1}--{\bf P4} is equivalent to {\bf U1}--{\bf U4}.
However, we will employ a result from~\cite{Barriers} that establishes a connection between these two experiments that
is strong enough to extend many results from {\bf P1}--{\bf P4} to {\bf U1}--{\bf U4}.

To state this result, observe that {\bf P1}--{\bf P4} and {\bf U1}--{\bf U4} essentially only differ in their first two steps.
Thus, let $\Lambda_k(n,m)$ denote the set of all pairs $(\Phi,\sigma)$, where $\Phi$ is a $k$-CNF on $V=\cbc{x_1,\ldots,x_n}$
with $m$ clauses, and $\sigma\in\cS(\Phi)$.
Let $\cU_k(n,m)$ denote the probability distribution induced on $\Lambda_k(n,m)$ by {\bf U1}--{\bf U2}, and let
$\cP_k(n,m)$ signify the distribution induced by {\bf P1}--{\bf P2};
this distribution is sometimes called the \emph{planted model}.

\begin{theorem}[\cite{Barriers}]\label{XCor_SATExchange}\label{Cor_SATExchange}
Suppose $k\geq4$ and $0<\rho<k\ln2-k^2/2^{k}$.
Let  $\EE\subset\Lambda_k(n,m)$. 
If
	$\pr_{\mathcal{P}_k\bc{n,m}}\brk{\EE}
		\geq1-\exp(-\rho n/2^k)$
then $\pr_{\mathcal{U}_k\bc{n,m}}\brk{\EE}=1-o(1).$
\end{theorem}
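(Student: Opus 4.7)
The plan is to compare $\cU_k(n,m)$ and $\cP_k(n,m)$ through their Radon--Nikodym derivative and then to control that derivative off a set of negligible mass. A direct counting check reveals that $\cP_k$ is simply the uniform distribution on $\Lambda_k(n,m)$: each pair $(\varphi,\sigma)\in\Lambda_k(n,m)$ carries probability $1/(2^n N)$ with $N=\binom{(2^k-1)\binom nk}{m}$, while $\cU_k$ weights $(\varphi,\sigma)$ by $1/(M\cdot Z(\varphi))$, where $M$ is the number of satisfiable $m$-clause $k$-CNFs on $V$ and $Z(\varphi)=|\cS(\varphi)|$. Writing $\bar Z:=\Erw[Z(\PHI)]=2^n(1-2^{-k})^m$ for the expected number of satisfying assignments of a uniform unconditioned formula, the derivative becomes
\[
L(\varphi,\sigma)\;:=\;\frac{d\cU_k}{d\cP_k}(\varphi,\sigma)\;=\;\frac{\bar Z}{\pr[\PHI\text{ is SAT}]\cdot Z(\varphi)}.
\]

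Next I would gather two inputs. First, in the regime $\rho<k\ln2-k^2/2^k$ the Achlioptas--Peres second moment method delivers $\pr[\PHI\text{ is SAT}]\geq c_1>0$ (indeed $1-o(1)$ via Friedgut's sharp threshold theorem). Second, the uniformity of $\cP_k$ on $\Lambda_k(n,m)$ together with $\sum_\varphi Z(\varphi)=2^n N$ yields the one-line size-biasing bound
\[
\pr_{\cP_k}\bigl[Z(\PHI)\leq t\bigr]\;=\;\sum_{\varphi:\,Z(\varphi)\leq t}\frac{Z(\varphi)}{2^n N}\;\leq\;\frac{t}{\bar Z},
\]
so that $\pr_{\cP_k}[Z(\PHI)<\bar Z e^{-\delta n}]\leq e^{-\delta n}$ for every $\delta>0$.

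These ingredients drive the main estimate. Fix $\delta=\rho/2^{k+1}$, set $K=e^{\delta n}$, and let $\cG=\{Z(\PHI)\geq\bar Z/K\}$. Splitting
\[
\pr_{\cU_k}[\bar\cE]\;\leq\;\pr_{\cU_k}[\bar\cE\cap\cG]+\pr_{\cU_k}[\bar\cG],
\]
on $\cG$ the ratio $L$ is at most $K/c_1$, so the hypothesis $\pr_{\cP_k}[\bar\cE]\leq e^{-\rho n/2^k}$ gives
\[
\pr_{\cU_k}[\bar\cE\cap\cG]\;\leq\;(K/c_1)\,\pr_{\cP_k}[\bar\cE]\;\leq\;c_1^{-1}e^{-\rho n/2^{k+1}}\;=\;o(1).
\]

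The delicate piece is $\pr_{\cU_k}[\bar\cG]=o(1)$, i.e.\ showing that $Z(\PHI)\geq\bar Z e^{-\delta n}$ w.h.p.\ under the uniform satisfiable measure; a direct transfer of the planted tail to $\cU_k$ would cost an entire factor $\bar Z$ and hence fail. To close this gap I would invoke concentration of $\ln Z(\PHI)$ under the unconditioned uniform measure: Azuma/Talagrand applied to the clause-exposure martingale (after a standard truncation that tames the a priori unbounded per-clause Lipschitz constant) gives w.h.p.\ concentration of $\ln Z(\PHI)$ around its median within $O(\sqrt{n\log n})$. Combined with the Paley--Zygmund estimate $\pr[Z(\PHI)\geq\bar Z/2]\geq\Omega(1)$ that falls out of the same second-moment calculation, this pins the median of $\ln Z(\PHI)$ at $\ln\bar Z-o(n)$, so $Z(\PHI)\geq\bar Z e^{-o(n)}$ w.h.p.\ and hence $\pr_{\cU_k}[\bar\cG]=o(1)$ since $o(n)\ll\delta n$ for our constant $\delta$. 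The main technical burden is thus this $\ln Z$ concentration argument, whose naive per-clause Lipschitz estimate breaks on atypical formulas.
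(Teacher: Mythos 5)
This theorem is imported from \cite{Barriers}; the paper you are reading does not reprove it, so there is no in-paper proof to compare against. Your reconstruction via the likelihood ratio $L = d\,\cU_k/d\,\cP_k$ is, however, a faithful account of the mechanism that makes the transfer work: you correctly observe that $\cP_k(n,m)$ is the uniform measure on $\Lambda_k(n,m)$, derive $L(\varphi,\sigma)=\bar Z/(\pr[\PHI\ \mathrm{SAT}]\cdot Z(\varphi))$ up to a $1+o(1)$ factor, get the size-biased tail bound $\pr_{\cP_k}[Z\le t]\le t/\bar Z$, and split on the event $\cG=\{Z\ge \bar Z/K\}$. All of that is sound.

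The genuine gap is the step $\pr_{\cU_k}[\bar\cG]=o(1)$, equivalently $\pr[Z(\PHI)<\bar Z e^{-\delta n}]=o(1)$ under the unconditioned model, and here I think you underestimate the difficulty of your proposed route. A clause-exposure (or vertex-exposure) bounded-differences argument does not apply to $\ln Z$: resampling a single clause can drive $Z$ from huge to $0$, so the per-step Lipschitz constant is not merely large on atypical formulas but actually infinite, and the ``standard truncation'' you allude to is not standard for this quantity --- making it rigorous amounts to a nontrivial result in its own right (this kind of free-energy concentration for random $k$-SAT requires substantially more than Azuma/Talagrand plus a cutoff). Note, moreover, that you need a lower bound on $\ln Z$ of the form $\ln\bar Z-o(n)$ to support your choice $\delta=\rho/2^{k+1}$; the available rigorous input, quoted in the paper as \Thm~\ref{XThm_count} (from \cite{SolSpace}), gives only $\frac1n\ln Z\ge \frac1n\ln\bar Z-0.99\rho/2^k$ \whp, obtained by a (weighted) second-moment calculation rather than by concentration of $\ln Z$. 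That bound is weaker but entirely sufficient: choosing $\delta$ in the window $(0.99\rho/2^k,\ \rho/2^k)$ (rather than $\rho/2^{k+1}$) makes both halves of your split go through, with $\pr_{\cU_k}[\bar\cE\cap\cG]\le c_1^{-1}\exp(-(\rho/2^k-\delta)n)=o(1)$ and $\pr_{\cU_k}[\bar\cG]\le c_1^{-1}\pr[Z<\bar Z e^{-\delta n}]=o(1)$. So the architecture of your argument is right; the fix is to replace the unproven $\ln Z=\ln\bar Z-o(n)$ concentration claim with the explicit (and provable) lower bound of \Thm~\ref{XThm_count}, and to retune $\delta$ accordingly.
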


\section{Shattering, pairwise distances, and condensation}

To prove shattering and condensation, we adapt arguments from~\cite{Barriers,SolSpace,Daude} to the
situation where we have the \emph{two} parameters $\theta,\rho$ (rather than just $\rho$).
Let $(\PHI_t,\SIGMA_t)$ be the (random) outcome of the experiment {\bf U1}--{\bf U4}.
For $0\leq\alpha\leq1$ let $X_\alpha(\PHI_t,\SIGMA_t)$ denote the number of satisfying assignments $\tau\in\cS(\PHI_t)$
with Hamming distance $d(\SIGMA_t,\tau)=\alpha\theta n$.
To establish the `shattering' part of \Thm~\ref{XThm_dRSB}, we are going to prove the following

\begin{Claim}\label{XClaim_shattering}
Under the assumptions of \Thm~\ref{XThm_dRSB} there exist
$a_1<a_2<0.49$, $a_3>0$ depending only on $k,\rho$ such that \whp\ 
we have
	\begin{eqnarray}\label{Xeqsh1}
	X_\alpha(\PHI_t,\SIGMA_t)&=&0\quad\mbox{ for all $a_1<\alpha<a_2$, and}\\
	\max_{\alpha\leq0.49}X_\alpha(\PHI_t,\SIGMA_t)&<&\exp(-a_3n)\cdot\abs{\cS(\PHI_t)}.	\label{Xeqsh2}
	\end{eqnarray}
\end{Claim}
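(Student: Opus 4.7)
My plan is to adopt the strategy used throughout this paper and reduce everything to the planted model. By \Thm~\ref{XCor_SATExchange} it suffices to establish both (\ref{Xeqsh1}) and (\ref{Xeqsh2}) with probability at least $1-\exp(-\rho n/2^k)$ under the distribution induced by experiment {\bf P1}--{\bf P4}; the decisive advantage of that experiment is that, conditional on $\SIGMA'$, the $m$ clauses of $\PHI'$ are drawn independently and uniformly from the $(2^k-1)\binom{n}{k}$ clauses satisfied by $\SIGMA'$, which makes a first-moment computation tractable.

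First I would compute $\Erw[X_\alpha(\PHI'_t,\SIGMA'_t)]$ in the planted model. Fixing a prospective assignment $\tau$ on $V_t$ at Hamming distance $\alpha\theta n$ from $\SIGMA'_t$ (equivalently, its extension to $V$ that agrees with $\SIGMA'$ on $x_1,\ldots,x_t$), a routine computation shows that a random clause satisfied by $\SIGMA'$ is also satisfied by $\tau$ with probability
$$p(\alpha,\theta)\;=\;\frac{(2^k-2)+(1-\alpha\theta)^k}{2^k-1}.$$
Consequently $\Erw[X_\alpha]=\binom{\theta n}{\alpha\theta n}\,p(\alpha,\theta)^m$, and setting
$$\phi(\alpha)\;=\;\theta\,H(\alpha)\,+\,r\,\ln p(\alpha,\theta)$$
(with $H$ the binary entropy in nats) one has $\Erw[X_\alpha]=\exp(\phi(\alpha)n+o(n))$, uniformly in $\alpha$.

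The heart of the argument is a careful real-variable analysis of $\phi$ under the hypothesis~(\ref{XeqdRSB}), adapting the corresponding analysis in~\cite{Barriers} to the present two-parameter setting. I would show that $\phi$ has two local maxima, one near $\alpha=0$ corresponding to the cluster surrounding $\SIGMA'_t$, and one near $\alpha=1/2$ corresponding to the `bulk' of $\cS(\PHI'_t)$, separated by a strictly negative valley. Concretely, the goal is to exhibit $a_1<a_2$ in $(0,1/2)$ and $a_3>0$ depending only on $k,\rho$ such that $\phi(\alpha)\leq-2a_3$ for every $\alpha\in(a_1,a_2)$ and $\phi(\alpha)\leq\phi(1/2)-2a_3$ for every $\alpha\leq 0.49$. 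The lower bound $k\theta\geq(\rho/\ln 2)(1+2\rho^{-2})$ is precisely the quantitative condition needed to depress the valley below zero, while the upper bound on $k\theta$ prevents the left-hand local maximum from merging with the symmetric one. Granted this, (\ref{Xeqsh1}) follows by Markov's inequality and a union bound over the $O(n)$ admissible values of $\alpha\in(a_1,a_2)$, each contributing a factor $\exp(-\Omega(n))$ comfortably below $\exp(-\rho n/2^k)$.

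The main obstacle lies in proving (\ref{Xeqsh2}), where a matching \emph{lower} bound on $|\cS(\PHI_t)|$ is needed. The natural route is a planted second-moment argument for $X_{1/2}$, which provides the dominant contribution to $\sum_\alpha X_\alpha$, in order to conclude that $|\cS(\PHI'_t)|\geq X_{1/2}\geq\exp((\phi(1/2)-a_3)n)$ with probability at least $1-\exp(-\rho n/2^k)$. The delicate step is to adapt the planted second-moment calculation of~\cite{Barriers} to accommodate both parameters $\theta$ and $\rho$: the second moment involves pairs $(\tau_1,\tau_2)$ whose overlap profile must be analysed jointly with the frozen coordinates $\SIGMA'|_{\{x_1,\ldots,x_t\}}$, and the resulting optimisation problem must be solved tightly enough that the failure probability is below the threshold required by \Thm~\ref{XCor_SATExchange}. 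Once this concentration is in hand, combining it with the Markov upper bound $X_\alpha\leq\exp((\phi(\alpha)+o(1))n)$ for $\alpha\leq 0.49$ yields (\ref{Xeqsh2}).
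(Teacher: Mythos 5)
Your first-moment computation matches the paper exactly (your $p(\alpha,\theta)$ and $\phi$ coincide with the paper's $\psi$), and the plan to prove $\sup_{a_1<\alpha<a_2}\psi(\alpha)<-\rho/2^k$ and then invoke Markov plus \Thm~\ref{XCor_SATExchange} is precisely what the paper does for~(\ref{Xeqsh1}). The gap is in your route to the lower bound on $|\cS(\PHI_t)|$ needed for~(\ref{Xeqsh2}). You propose a planted second-moment bound on $X_{1/2}$ followed by transfer through \Thm~\ref{XCor_SATExchange}. But that transfer theorem only moves events whose \emph{failure} probability in the planted model is below $\exp(-\rho n/2^k)$, whereas a Paley--Zygmund second-moment argument yields only a constant (or at best polynomially small) failure probability. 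You acknowledge this tension, but offering to ``solve the optimisation tightly enough'' does not resolve it: tightening the second moment cannot turn a constant-probability conclusion into an exponentially high-probability one without an entirely different concentration mechanism.

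The paper sidesteps this obstacle. It imports the lower bound $\frac1n\ln|\cS(\PHI)|\geq\ln2+2^k\frac{\rho}{k}\ln(1-2^{-k})-0.99\rho/2^k$ whp directly in the \emph{uniform} model (\Thm~\ref{XThm_count}, due to~\cite{SolSpace}; no transfer needed), and then converts it to a lower bound on $|\cS(\PHI_t)|$ by a short double-counting/Markov argument (\Cor~\ref{XCor_count}): for a uniformly random $\SIGMA\in\cS(\PHI)$, the number of assignments agreeing with $\SIGMA$ on $x_1,\ldots,x_t$ is, with probability $1-\exp(-\xi n)$ over the choice of $\SIGMA$, at least $\exp(-t\ln2-\xi n)|\cS(\PHI)|$. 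This is much lighter than a fresh second-moment computation in the decimated planted model and avoids the transfer-probability obstruction altogether. A secondary point: your description of $\phi$ as having ``two local maxima, one near $\alpha=0$ and one near $\alpha=1/2$'' is not how the paper's analysis runs; the relevant comparison is between $\sup_{\alpha\leq 0.49}\psi(\alpha)$ and the count lower bound $b=\theta\ln2+2^k\rho\ln(1-2^{-k})/k-\rho/2^k$ (see the appendix Lemmas~\ref{Lemma_a},~\ref{Lemma_alpha}, and the case analysis in \Lem~\ref{Lemma_overlap}), not between two peaks of $\psi$; the valley sits near $\alpha=\exp(2-\rho)$, which is tiny.
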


Claim~\ref{XClaim_shattering} implies that for the outcome $\PHI_t$ of the first $t$ steps
of the decimation process the set $\cS(\PHI_t)$ shatters \whp\
For by Fact~\ref{XFact_UD} Claim~\ref{XClaim_shattering} implies that \whp\
almost all $\sigma_t\in\cS(\PHI_t)$ are such that
(\ref{Xeqsh1}) and~(\ref{Xeqsh2}) hold.
Choose any such $\sigma_{t,1}\in\cS(\PHI_t)$ and let $R_1=\cbc{\tau\in\cS(\PHI_t):d(\tau,\sigma_{t,1})\leq a_1n}$.
Then, choose $\sigma_{t,2}\in\cS(\PHI_t)\setminus R_1$
	satisfying
	(\ref{Xeqsh1}) and~(\ref{Xeqsh2}), let 
$R_2=\cbc{\tau\in\cS(\PHI_t)\setminus R_1:d(\tau,\sigma_{t,2})\leq a_1n}$, and proceed inductively until
all remaining satisfying assignments violate either~(\ref{Xeqsh1}) or (\ref{Xeqsh2}).
Let $R_1,\ldots,R_N$ be the classes constructed in this way and let $R_0=\cS(\PHI_t)\setminus\bigcup_{i=1}^NR_i$.
An additional (simple) argument is needed to show that $|R_0|\leq\exp(-\Omega(n))|\cS(\PHI_t)|$ \whp\
The decomposition $R_0,\ldots,R_N$ witnesses that $\cS(\PHI_t)$ shatters.

With respect to pairwise distances of satisfying assignments, (\ref{Xeqsh2}) implies that \whp\ only
an exponentially small fraction of all satisfying assignments of $\PHI_t$ lies within distance $\leq0.49\theta n$
of $\SIGMA_t$.
It is not difficult to derive the statement made in \Thm~\ref{XThm_dRSB} on the average pairwise distance
from this.
In addition, the fact that the average pairwise distance of satisfying assignments is $\geq0.49\theta n$ \whp\
implies in combination with a double counting argument the claim about the marginals $M_x(\PHI_t)$
in \Thm s~\ref{XThm_sym} and~\ref{XThm_dRSB}.

To establish Claim~\ref{XClaim_shattering} we will work with the experiment {\bf P1}--{\bf P4}
and use \Thm~\ref{XCor_SATExchange} to transfer the result to the experiment {\bf U1}--{\bf U4}.
Thus, let $(\PHI_t',\SIGMA_t')$ be the (random) outcome of experiment {\bf P1}--{\bf P4},
and assume that $k,\rho,\theta$ are as in \Thm~\ref{XThm_dRSB}.
To prove~(\ref{Xeqsh1}) we need to bound $X_\alpha(\PHI_t',\SIGMA_t')$ from above, for which we use
the `first moment method'.
Indeed, by standard arguments (similar to those used in~\cite{SolSpace}) the expectation of $X_\alpha(\PHI_t',\SIGMA_t')$ satisfies
	$\frac1n\ln\Erw X_\alpha(\PHI_t',\SIGMA_t')\leq\psi(\alpha)$, with
	$$\psi(\alpha)=
		-\alpha\theta\ln\alpha-(1-\alpha)\theta\ln(1-\alpha)+
			\frac{2^k\rho}k\ln\bc{1-\frac{1-(1-\alpha\theta)^k}{2^k-1}}.$$
Thus, in order to prove that $\max_{a_1<\alpha<a_2}X_\alpha(\PHI_t',\SIGMA_t')=0$ \whp\ we would just have to prove that
$\max_{a_1<\alpha<a_2}\psi(\alpha)<0$ (so that Markov's inequality implies that $X_{\alpha}=0$ \whp).
But as our goal is to prove a result about the $X_\alpha(\PHI_t,\SIGMA_t)$ (i.e., the experiment {\bf U1}--{\bf U4}),
we need to prove a slightly stronger bound, namely
	\begin{equation}\label{Xeqpsialphastrong}
	\max_{a_1<\alpha<a_2}\psi(\alpha)<-\rho/2^k.
	\end{equation}
Then Markov's inequality and \Thm~\ref{XCor_SATExchange} imply the first part of Claim~\ref{XClaim_shattering}.
Via elementary calculus, one can show that~(\ref{Xeqpsialphastrong}) holds with
$a_1=\exp(2-\rho)-\eps$ and $a_2=\exp(2-\rho)+\eps$ for a sufficiently small $\eps>0$.

To prove~(\ref{Xeqsh2}) we bound $\Erw X_\alpha$ from above by a similar first moment argument.
But in addition, we need a lower bound on $\abs{\cS(\PHI_t)}$.
To derive this lower bound, we need

\begin{theorem}[\cite{SolSpace}]\label{XThm_count}\label{Thm_count}
Assume $k\geq 4$ and $\rho\leq k\ln2-k^2/2^k$.
Then \whp\ 
	$\frac1n\ln\abs{\cS(\PHI)}\geq\ln2+2^k\frac\rho k\ln(1-2^{-k})-0.99\rho/2^k.$
\end{theorem}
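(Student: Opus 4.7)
The plan is to apply a (weighted) second moment method in the spirit of Achlioptas--Peres. The first moment is a one-line calculation: each of the $2^n$ assignments satisfies each independent random clause with probability $1-2^{-k}$, so
\begin{equation*}
\mathrm{E}|\cS(\PHI)| = 2^n(1-2^{-k})^m,
\end{equation*}
whence $n^{-1}\ln\mathrm{E}|\cS(\PHI)| = \ln 2 + (2^k\rho/k)\ln(1-2^{-k})$. The theorem asks for a whp lower bound on $n^{-1}\ln|\cS(\PHI)|$ that falls short of this first moment exponent by at most the small additive slack $0.99\rho/2^k$, so it suffices to show that $|\cS(\PHI)|\geq\exp(-0.99\rho n/2^k)\cdot\mathrm{E}|\cS(\PHI)|$ whp.

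I would not work with $|\cS(\PHI)|$ directly but with the weighted count
\begin{equation*}
Z_\PHI = \sum_{\sigma\in\cbc{0,1}^V}\prod_{C\in\PHI}f\bc{s(C,\sigma)},
\end{equation*}
where $s(C,\sigma)$ denotes the number of literals of $C$ satisfied by $\sigma$ and $f$ is the Achlioptas--Peres weight function (with $f(0)=0$ to kill unsatisfying assignments, and shaped on $\cbc{1,\ldots,k}$ to reward balanced clauses). A Laplace-type analysis of $\mathrm{E} Z_\PHI^2$ as a sum over the Hamming overlap shows that the overlap-profile exponent is uniquely maximised at overlap $1/2$ throughout the range $\rho\leq k\ln 2-k^2/2^k$, yielding $\mathrm{E} Z_\PHI^2 = O\bc{(\mathrm{E} Z_\PHI)^2}$ and, via Paley--Zygmund, $Z_\PHI\geq\mathrm{E} Z_\PHI/2$ with constant positive probability. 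Since $f\leq 1$ and $f(0)=0$ force $Z_\PHI\leq|\cS(\PHI)|$ pointwise, and since the normalisation of $f$ can be arranged so that $\mathrm{E} Z_\PHI$ is at most a factor $\exp(0.98\rho n/2^k)$ below $\mathrm{E}|\cS(\PHI)|$, this delivers $|\cS(\PHI)|\geq\exp(-0.99\rho n/2^k)\mathrm{E}|\cS(\PHI)|$ with constant positive probability.

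To upgrade constant probability to whp I would invoke concentration of $n^{-1}\ln|\cS(\PHI)|$ around a deterministic limit. This can be obtained either by Azuma applied to a suitably truncated $\ln|\cS(\PHI)|$ (the truncation being necessary because a single clause swap can in principle wipe out all satisfying assignments) or, more cleanly, from Friedgut's sharp threshold theorem together with monotonicity of satisfiability in the clause set. Combining concentration with the Paley--Zygmund bound pins the deterministic limit of $n^{-1}\ln|\cS(\PHI)|$ to within $o(1)$ of the first moment exponent, and this $o(1)$ comfortably fits inside the slack $0.99\rho/2^k$ as soon as $\rho\geq\rho_0$ is bounded below.

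The main technical obstacle is the weighted second moment estimate itself. The unweighted second moment $\mathrm{E}|\cS(\PHI)|^2$ is exponentially larger than $(\mathrm{E}|\cS(\PHI)|)^2$ throughout most of the regime $\rho\leq k\ln 2-k^2/2^k$, because the overlap profile for the unweighted count peaks strictly away from $1/2$. Engineering the weight $f$ and then verifying that the resulting exponent is genuinely and uniquely maximised at overlap $1/2$ uniformly across the allowed range of $\rho$ is the core of the argument of~\cite{SolSpace}, which I would quote as a black box.
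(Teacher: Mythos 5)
The paper does not prove this statement itself; it is imported verbatim from \cite{SolSpace} and used as a black box (e.g.\ in the derivation of \Cor~\ref{Cor_count}). Your sketch --- an Achlioptas--Peres weighted second moment with a weight $f$ satisfying $f(0)=0$ and $f\le 1$, Paley--Zygmund to get a constant-probability lower bound on $|\cS(\PHI)|$, and then a sharp-threshold/monotonicity argument for the decreasing events $\{|\cS(\PHI)|\ge T\}$ to upgrade to whp --- is indeed the route taken in \cite{SolSpace}, and the one genuinely delicate point you correctly flag as black-boxed (showing the first-moment loss $\Erw Z_\PHI/\Erw|\cS(\PHI)|$ fits inside the slack $\exp(0.99\rho n/2^k)$; note both quantities have exponent of order $kn/2^k$, so the constant is not automatic) is exactly what \cite{SolSpace} verifies.
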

In combination with a double counting argument, \Thm~\ref{XThm_count} implies the following lower bound
on $\cS(\PHI_t)$, which entails the second part of Claim~\ref{XClaim_shattering}.

\begin{corollary}\label{XCor_count}
Let $(\PHI_t,\SIGMA_t)$ be the outcome of {\bf U1}--{\bf U4}.
Let $1\leq t\leq n$.
Then \whp\
	$\frac1n\ln\abs{\cS(\PHI_t)}\geq\theta\ln2+2^k\frac{\rho}k\ln(1-2^{-k})-\frac{\rho}{2^k}.$
\end{corollary}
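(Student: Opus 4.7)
The plan is to derive the corollary from \Thm~\ref{Thm_count} via a short pigeonhole argument relating $|\cS(\PHI_t)|$ to $|\cS(\PHI)|$ through the fibers of the restriction map to the decimated variables. I would work in the equivalent formulation {\bf U1}--{\bf U4} (Fact~\ref{XFact_UD}), in which $\PHI_t$ is obtained by drawing $\SIGMA\in\cS(\PHI)$ uniformly at random and freezing it on $\cbc{x_1,\dots,x_t}$. Let $\pi_t$ denote the restriction sending $\sigma\in\cbc{0,1}^V$ to its values on these $t$ variables, and set $\cS_s(\PHI)=\cbc{\sigma\in\cS(\PHI):\pi_t(\sigma)=s}$ for the fiber over $s\in\cbc{0,1}^t$. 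Then $|\cS(\PHI_t)|=|\cS_{\pi_t(\SIGMA)}(\PHI)|$.

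Conditional on $\PHI$, the assignment $\SIGMA$ is uniform on $\cS(\PHI)$, so for any threshold $N_0>0$,
$$
\pr_\SIGMA\brk{|\cS_{\pi_t(\SIGMA)}(\PHI)|\le N_0}
 =\frac{\sum_{s:\,|\cS_s(\PHI)|\le N_0}|\cS_s(\PHI)|}{|\cS(\PHI)|}
 \le\frac{2^t\cdot N_0}{|\cS(\PHI)|},
$$
because at most $2^t$ fibers contribute to the numerator. Taking $N_0=2^{-\eps n}|\cS(\PHI)|/2^t$ for a small constant $\eps=\eps(k,\rho)>0$, the right-hand side is $2^{-\eps n}=o(1)$, so \whp\ $|\cS(\PHI_t)|\geq 2^{-\eps n}|\cS(\PHI)|/2^t$.

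Finally I would apply \Thm~\ref{Thm_count} to lower-bound $|\cS(\PHI)|$, then take logarithms, divide by $n$, and use $t/n=1-\theta$ to obtain
$$
\tfrac{1}{n}\ln|\cS(\PHI_t)|\geq \theta\ln 2+2^k(\rho/k)\ln(1-2^{-k})-0.99\rho/2^k-\eps\ln 2
$$
\whp\ Choosing $\eps\le 0.01\rho/(2^k\ln 2)$ absorbs the $\eps\ln 2$ term into the slack between $0.99\rho/2^k$ and $\rho/2^k$, yielding the claimed bound. The only substantive step is the pigeonhole tail bound on the size of the random fiber $\cS_{\pi_t(\SIGMA)}(\PHI)$; this is exactly the ``double counting argument'' alluded to in the statement of the corollary, and everything else is bookkeeping, so I do not anticipate any serious obstacle.
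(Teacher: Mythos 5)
Your proposal matches the paper's own proof almost verbatim: both condition on the high-probability event from \Thm~\ref{Thm_count}, exploit that $\SIGMA$ is uniform on $\cS(\PHI)$ to bound the probability that the random fiber over $\pi_t(\SIGMA)$ is small by a $2^t$-term pigeonhole sum, and then take logarithms. If anything, you are slightly more careful than the paper in explicitly using the $0.01\rho/2^k$ slack between \Thm~\ref{Thm_count}'s $-0.99\rho/2^k$ and the corollary's $-\rho/2^k$ to absorb the $\eps$ loss, whereas the paper's write-up glosses over that step.
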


The proof of the `condensation' part of \Thm~\ref{XThm_cond} is based on similar arguments.
Basically, to show condensation we need to prove that $\max_{\alpha>a_4}X_\alpha(\PHI_t,\SIGMA_t)<0$
with probability $1-\exp(-\Omega(n))$, where we let $a_4=\exp(2-\rho)$.
This  is done via the first moment method and  boils down to proving that $\psi(\alpha)<-\rho/2^k$
for all $\alpha>a_4$.

\section{Rigid variables}\label{Sec_rigidOutline}

Assume that $k,\rho,\theta$ satisfy the assumptions of \Thm~\ref{XThm_dRSB}.
Let $(\PHI_t,\SIGMA_t)$ be the (random) outcome of {\bf U1}--{\bf U4}.
Our goal is to show that \whp\ most variables $x\in V_t$ are rigid.

What is the basic obstacle that makes it difficult to `flip' the value of $x$?
Observe that we can simply assign $x$ the opposite value $1-\SIGMA_t(x)$, unless
$\PHI_t$ has a clause $\cC$ in which either $x$ or $\bar x$ is the \emph{only} literal that is true under $\SIGMA_t$.
If there is such a clause, we say that $x$ \emph{supports} $\cC$.
But even if $x$ supports a clause $\cC$ it might be easy to flip.
For instance, if $\cC$ features some variable $y\neq x$ that does not support a clause,
then we could just flip both $x,y$ simultaneously.
Thus, to establish the existence of $\Omega(n)$-rigid variables we need to analyze the distribution of the number of clauses that a variable supports,
the probability that these clauses only consists of variables that support further clauses,
the probability that the same is true of those clauses, etc.

This analysis can be performed fairly neatly for the outcome $(\PHI_t',\SIGMA_t')$ of the experiment {\bf P1}--{\bf P4}.
Let us sketch how this works, and why rigidity occurs at $k\theta=\exp((1+o(1))\rho)$ (cf.~(\ref{XeqdRSB})).
For a variable $x\in V_t$ we let $S_x$ be the number of clauses supported by $x$.
Given the assignment $\SIGMA'$ chosen in step {\bf P1}, there are a total of $\bink{n-1}{k-1}$ possible
clauses that $x$ supports.
Since in step {\bf P2} we include $m$ out of the $(2^k-1)\bink nk$ possible clauses satisfied under $\SIGMA'$ uniformly
and independently, we get
	$$\Erw\brk{S_x}=m\bink{n-1}{k-1}\bc{(2^k-1)\bink nk}^{-1}=\rho/(1-2^{-k})\geq\rho.$$
In fact, $S_x$ is binomially distributed.
Hence, $\pr\brk{S_x=0}\leq\exp(-\rho)$.
Thus, the \emph{expected} number of variables $x\in V_t$ with $S_x=0$ is $\leq\theta n\exp(-\rho)$.
Furthermore, if we condition on $S_x=j\geq1$, then the actual \emph{clauses} $\cC_1,\ldots,\cC_j$
supported by $x$ are just independently uniformly distributed over the set of all $\bink{n-1}{k-1}$ possible clauses that $x$ supports.
Therefore, the \emph{expected} number of variables $y\in V_t$ with $S_y=0$ occurring in one of these clauses $\cC_i$
is $(1+o(1))(k-1)\cdot \theta\exp(-\rho)\leq k\theta\exp(-\rho)$.
Hence, if $\theta$ is as in~(\ref{XeqdRSB}), then this number is
	$\leq\exp(-2)/\rho$, i.e., `small' for $\rho\geq\rho_0$ sufficiently big.
Thus, we would expect that \emph{most} clauses supported by $x$ indeed consist exclusively of variables that support other clauses.
This heuristic argument shows that for $\theta$ as in~(\ref{XeqdRSB}) we can plausibly expect most variables to be rigid.

Let us now indicate how this argument can be carried out in detail.
Analyzing the distribution of the variables $S_x$ in the experiment {\bf P1}--{\bf P4} and extending
the result to the experiment {\bf U1}--{\bf U4}
via \Thm~\ref{XCor_SATExchange}, and setting $\zeta=\rho^2/\exp(\rho)$, we obtain the following.

\begin{proposition}\label{XProp_Supp}
Suppose that $k,\rho,\theta$ satisfy the assumptions of \Thm~\ref{XThm_dRSB}.
Then \whp\ in a random pair $(\PHI_t,\SIGMA_t)$ generated by the experiment {\bf U1}--{\bf U4} no more than
$2\zeta\theta n$ variables in $V_t$ support fewer than three clauses,
\end{proposition}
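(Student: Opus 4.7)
The plan is to analyze $S_x$ in the planted model {\bf P1}--{\bf P4} and transfer the result to {\bf U1}--{\bf U4} via \Thm~\ref{XCor_SATExchange}. Let $(\PHI_t',\SIGMA_t')$ be the outcome of {\bf P1}--{\bf P4} and fix $x \in V_t$. First I would observe that if a clause of $\PHI'$ has $x$ as its unique true literal under $\SIGMA'$, then all other literals (including those on the decimated variables $x_1,\dots,x_t$) are false under $\SIGMA'$, so the substitution step {\bf P3} merely deletes those literals and leaves a shortened clause of $\PHI_t'$ still supported by $x$. Conversely, any clause of $\PHI_t'$ supported by $x$ must descend from such a clause of $\PHI'$. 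Hence $S_x$ equals the number of clauses of $\PHI'$ in which $x$ is the unique true literal under $\SIGMA'$.

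Given $\SIGMA'$, the $m$ clauses of $\PHI'$ are drawn i.i.d.\ uniformly from the $(2^k-1)\bink{n}{k}$ clauses satisfied by $\SIGMA'$, and exactly $\bink{n-1}{k-1}$ of these have $x$ as the unique true literal (pick the other $k-1$ variables freely, their signs being forced). Thus $S_x \sim \Bin(m,p)$ with $p = k/(n(2^k-1))$, giving $\Erw[S_x] = \rho/(1-2^{-k}) \geq \rho$. A direct Poisson-type estimate then yields $\pr[S_x \leq 2] \leq (1+\rho+\rho^2/2)e^{-\rho} \leq \zeta$ for $\rho \geq \rho_0$ sufficiently large. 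Setting $Y = |\{x \in V_t : S_x < 3\}|$, linearity of expectation delivers $\Erw[Y] \leq \zeta\theta n$; an analogous lower bound $\Erw[Y] \geq \zeta\theta n/3$ will be needed for the tail estimate.

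For the concentration of $Y$, the key structural point is that each clause supports at most one variable (its unique true literal), so the joint distribution $(S_x)_{x \in V}$ is multinomial and the $S_x$'s are therefore negatively associated. Since $\mathbf{1}[S_x < 3]$ is a monotone function of $S_x$, the indicators $\{\mathbf{1}[S_x < 3] : x \in V_t\}$ are negatively associated as well, so the standard Chernoff bound applies to $Y$. A short calculation using $k\theta \geq \rho/\ln 2$ and $e^\rho \leq 2^k/k^2$ (both consequences of the hypotheses of \Thm~\ref{XThm_dRSB}) shows $\Erw[Y] \geq c \rho n/2^k$ for some absolute $c>0$, whence Chernoff gives $\pr[Y > 2\zeta\theta n] \leq \pr[Y > 2\Erw Y] \leq \exp(-\Omega(\Erw Y)) \leq \exp(-\rho n/2^k)$. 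Applying \Thm~\ref{XCor_SATExchange} to the event $\{Y \leq 2\zeta\theta n\}$ transfers the conclusion from {\bf P1}--{\bf P4} to {\bf U1}--{\bf U4}.

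The main obstacle is producing a concentration exponent strong enough to feed into \Thm~\ref{XCor_SATExchange}: a naive bounded-differences estimate treating the $m$ clauses as independent coordinates of Lipschitz constant $2k$ loses factors of $k$ and $e^\rho$ and falls well short of the required $\rho n/2^k$. Exploiting the multinomial structure, and hence the negative association of the $S_x$'s, is what recovers a Chernoff bound of the correct order; matching the resulting exponent to $\rho n/2^k$ is what ties together the side conditions on $k$, $\rho$ and $\theta$.
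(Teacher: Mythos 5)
Your proof is correct in substance, but it takes a genuinely different route from the paper's. The paper works in the variant planted model $\cP_k'(n,m)$ (experiment {\bf P1'}--{\bf P4'} in Appendix~\ref{Apx_pre}), where each of the $(2^k-1)\binom{n}{k}$ clauses satisfied by $\SIGMA'$ is included \emph{independently} with probability $p = m/((2^k-1)\binom nk)$. In that model the sets $\cS_x$ of clauses supported by $x$ are pairwise disjoint, so the counts $S_x$ become \emph{mutually independent} rather than merely negatively associated; consequently $Y=\sum_x \mathbf{1}\{S_x<3\}$ is exactly binomially distributed (this is \Lem~\ref{Lemma_SuppBin}), and the Chernoff bound~(\ref{eqChernoff}) applies directly. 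The transfer back to $\cU_k$ is then via \Cor~\ref{Cor_Pnm'} (the $\cP_k'$ version of the transfer theorem). You instead stay in the $m$-i.i.d.-clauses model $\cP_k(n,m)$, observe the multinomial structure (each clause lands in the cell of its unique supporting variable or in a ``no support'' remainder cell), and invoke negative association to get Chernoff, transferring via \Thm~\ref{XCor_SATExchange}. Both routes land on the same exponent and both transfer theorems are available in the paper; the paper's binomial-inclusion trick is the standard way to upgrade negative association to genuine independence, and it makes the argument slightly more elementary by avoiding the NA machinery. Your diagnosis that a bounded-differences estimate would lose too much is correct and matches the reason the paper, too, avoids that route. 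Two small quantitative points worth tightening: the actual mean is $\mu=\rho/(1-2^{-k})>\rho$ (the appendix's \Prop~\ref{Prop_Supp} defines $\zeta=(1+\mu+\mu^2/2)e^{-\mu}$ in terms of $\mu$, not $\rho$), and the inequality $(1+\rho+\rho^2/2)e^{-\rho}\le\rho^2 e^{-\rho}$ needs $\rho\ge 1+\sqrt3$, which is fine under $\rho\ge\rho_0$ but should be stated; also the final Chernoff exponent should be pinned down as $\varphi(1)\,\Erw Y$ (with $\varphi(1)=2\ln2-1$) and compared against $\rho n/2^k$ explicitly, as the paper does via the side condition $2^k\theta\zeta\varphi(1)>\rho$.
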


To establish rigidity, we need to show that most variables support clauses in which only variables
occur that support other clauses.
To express this, we say that $S\subset V_t$ is $t$-{\em self-contained}
if each $x\in S$ supports at least two clauses of $\PHI_t$ that contain
variables from $S$ only.
From \Prop~\ref{XProp_Supp} we can derive the following.

\begin{proposition}\label{XProp_RandPoisson}
Suppose that $k,\rho,\theta$ satisfy the assumptions of \Thm~\ref{XThm_dRSB}.
The outcome $(\PHI_t,\SIGMA_t)$ of {\bf U1}--{\bf U4} has 
a $t$-self-contained set of size $(1-3\zeta)\theta n$ \whp
\end{proposition}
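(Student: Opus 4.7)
The plan is to transfer to the planted model via \Thm~\ref{XCor_SATExchange} and then execute a peeling argument. Thus, let $(\PHI_t',\SIGMA_t')$ denote the random pair produced by {\bf P1}--{\bf P4}; our task is to show that with probability at least $1-\exp(-\rho n/2^k)$ the formula $\PHI_t'$ contains a $t$-self-contained set of size $(1-3\zeta)\theta n$, since then \Thm~\ref{XCor_SATExchange} delivers the statement for {\bf U1}--{\bf U4} w.h.p.

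First I would set up the monotone peeling. Define $S_0 = \{x \in V_t : S_x \geq 3\}$; by \Prop~\ref{XProp_Supp} (which, transferred to the planted model via \Thm~\ref{XCor_SATExchange}, holds with the required exponentially small failure probability) we have $|V_t \setminus S_0| \leq 2\zeta\theta n$. Then iterate $S_{i+1} = \{x \in S_i : x$ supports $\geq 2$ clauses whose non-supporter variables all lie in $S_i\}$ and let $S^\ast = \bigcap_{i\geq 0} S_i$. By construction $S^\ast$ is $t$-self-contained, and it remains to show that the peeling removes at most an additional $\zeta\theta n$ variables, i.e., $|S_0 \setminus S^\ast| \leq \zeta\theta n$.

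The main ingredient is a first-moment estimate on ``contamination witnesses''. For any $x \in S_0 \setminus S^\ast$ there is a rooted witness tree whose root is $x$, whose internal supporter nodes each have at least two contaminated supported clauses as children, and whose leaves all lie in $V_t \setminus S_0$. In the planted model, conditional on the vector $(S_y)_y$ of support counts, the identities of the non-supporter variables in each supported clause are independently uniform over the $\bink{n-1}{k-1}$ possible subsets; this independence, together with the smallness of $V_t \setminus S_0$ (a fraction at most $O(1/k)$ of $V_t$ in the dRSB window), lets me bound the expected number of witness trees of each shape by a product of transition probabilities. Under the upper bound on $k\theta$ in~(\ref{XeqdRSB}) the resulting branching process is sub-critical, and the expected total number of contaminated variables is $O(\zeta\theta n)$; a Markov/large-deviation step then yields $|S_0 \setminus S^\ast| \leq \zeta\theta n$ with the required probability, and \Thm~\ref{XCor_SATExchange} transports the conclusion back to {\bf U1}--{\bf U4}.

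The hard part is a quantitative one: making the cascade analysis tight enough. Each supporter variable reaches on the order of $k\rho$ other variables through its supported clauses, so the branching factor is large and must be balanced against a per-edge contamination probability on the order of $k\zeta\theta$; the precise choice $\zeta = \rho^2/\exp(\rho)$ combined with the upper bound $k\theta \leq \exp[\rho(1 - \ln\rho/\rho - 2/\rho)]$ from~(\ref{XeqdRSB}) is exactly what is needed to keep the expectation of the witness-tree count below $\zeta\theta n$. Additional technical subtleties arise from the non-uniform clause-length profile in $\PHI_t'$ (clauses of length $<k$ created in step {\bf P3}) and from the mild dependencies between the $S_y$; both can be absorbed into the estimate by a slightly refined description of the joint distribution of $(S_y)_y$, in the spirit of the arguments developed in~\cite{Barriers}.
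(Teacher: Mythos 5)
Your overall framework (work in the planted model and transfer via \Thm~\ref{XCor_SATExchange}) is right, but the argument you sketch diverges from the paper's proof in a way that leaves a real gap. The paper does \emph{not} run a witness-tree/branching-process analysis. Instead, it observes that if no self-contained set of size $(1-3\zeta)\theta n$ exists, then the peeling you describe produces an explicit ``bad'' set $X\subset V_t\setminus Z$ of size exactly $\zeta\theta n$ such that every $x\in X$ satisfies $T_x\geq S_x-1$ with respect to $X\cup Z$ (by monotonicity, the condition that triggered $x$'s removal at an earlier peeling round is preserved under passing to the full set $X$). It then exploits the key structural fact that, given $\cE$ and the support counts $(S_y)$, the events $\{T_x\geq S_x-1\}$ are \emph{mutually independent} across $x\in X$, so one directly bounds $\pr[\forall x\in X:T_x\geq S_x-1\mid\cE]\leq(\gamma+o(1))^{\zeta\theta n}$ and finishes with a union bound over all $\binom{(1-2\zeta)\theta n}{\zeta\theta n}$ candidate sets $X$. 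Condition~(\ref{eqRandPoisson}) is tailored precisely so that this union bound beats $\exp(-\rho n/2^k)$, which is exactly the threshold needed for \Thm~\ref{XCor_SATExchange}.

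The concrete gap in your proposal is the concentration step. You conclude ``the expected total number of contaminated variables is $O(\zeta\theta n)$; a Markov/large-deviation step then yields $|S_0\setminus S^*|\leq\zeta\theta n$ with the required probability.'' If the expectation is of order $\zeta\theta n$, Markov's inequality only gives a constant-probability bound, nowhere near the failure probability $\exp(-\rho n/2^k)$ that the transfer theorem demands. To get exponential concentration by the witness-tree route you would have to union-bound over a suitable class of witnesses (e.g., vertex-disjoint packings of witness trees spanning $\Omega(n)$ contaminated variables) and show the expected count of such packings is exponentially small — but at that point you would essentially be re-deriving the paper's union bound over bad sets $X$, just with more bookkeeping. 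The paper's direct union bound is the clean way to get the exponential tail, and your proposal as written does not deliver it.

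Two further small points. First, you say you would ``transfer \Prop~\ref{XProp_Supp} to the planted model via \Thm~\ref{XCor_SATExchange}'' — but the transfer goes in the opposite direction (planted $\Rightarrow$ uniform); what you actually need is the underlying planted-model estimate on the number of low-support variables, which comes straight from \Lem~\ref{Lemma_SuppBin} and a Chernoff bound. Second, the paper avoids conditioning subtleties by fixing a set $Z$ of size $2\zeta\theta n$ and conditioning on all low-support variables lying in $Z$; your $S_0$ is the complement of this random set, and the independence structure used above is cleanest in the fixed-$Z$ conditional world. Adopting that conditioning would also simplify your argument.
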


Suppose that $(\PHI_t,\SIGMA_t)$ has a self-contained set $S$ of size $(1-3\zeta)\theta n$.
To flip the value of a variable $x\in S$ we need to also flip one other variable from each of the (at least two) clauses that $x$ supports
and that consist of variables from $S$ only.
As each of these two variables, in turn, supports at least two clauses comprised of variables from $S$ only, we need to also flip further variables in those.
But these variables are again contained in $S$.
This suggests that attempting to flip $x$ will entail an avalanche of further flips.
Indeed, the expansion properties of the random formula $\PHI_t$ imply the following.

\begin{proposition}\label{XProp_frozen}
Suppose that $k,\rho,\theta$ satisfy the assumptions of \Thm~\ref{XThm_dRSB}.
There is $\chi=\chi(k,\rho)>0$ such that
the outcome $(\PHI_t,\SIGMA_t)$ of {\bf U1}--{\bf U4} has the following property \whp:
all variables that are contained in a $t$-self-contained set  are $\chi n$-rigid.
\end{proposition}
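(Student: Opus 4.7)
The plan is to couple the self-contained structure produced by \Prop~\ref{XProp_RandPoisson} with a standard expansion property of the random satisfied formula, then conclude that flipping a single variable in a self-contained set triggers an avalanche that cannot terminate below $\chi n$ reassignments. As usual, I would prove the expansion statement in the planted model \textbf{P1}--\textbf{P4} and transfer the conclusion to \textbf{U1}--\textbf{U4} via \Thm~\ref{XCor_SATExchange}.

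Concretely, call a set $T\subseteq V_t$ \emph{bad} if $\PHI_t'$ contains at least $2|T|$ clauses with two or more variables in $T$. The key lemma is that there exists $\chi=\chi(k,\rho)>0$ such that with probability at least $1-\exp(-\rho n/2^k)$ no nonempty set $T$ with $|T|\leq\chi n$ is bad. Conditional on $\SIGMA'$, every clause of $\PHI_t'$ is a uniformly chosen element of the $(2^k-1)\bink{n}{k}$ clauses satisfied under $\SIGMA'$, so for any fixed pair of variables the probability that a given clause contains both is $\bink{n-2}{k-2}/\bink{n}{k}\leq k^2/n^2$. Hence for $|T|=s$ the number of clauses with at least two variables in $T$ is stochastically dominated by a binomial with mean $O(k^2 rs^2/n)$, and
\[
\pr\brk{T\text{ bad}}\leq\bink{m}{2s}\bc{\tfrac{k^2 s^2}{n^2}}^{2s}.
\]
Taking a union bound over $\bink{n}{s}$ sets of size $s$ and summing over $1\leq s\leq\chi n$ gives, after elementary manipulation, a bound of the form $\sum_{s}[C(k,\rho)\cdot s/n]^{s}=\exp(-\Omega(n))$ once $\chi$ is chosen small enough in terms of $k$ and $\rho$ (e.g.\ $\chi=c/(k^4\rho^2)$ suffices). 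This is far below $\exp(-\rho n/2^k)$, so \Thm~\ref{XCor_SATExchange} transfers the expansion to $(\PHI_t,\SIGMA_t)$.

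Given the expansion property, rigidity follows by a short contradiction argument. Fix a $t$-self-contained set $S$ as in \Prop~\ref{XProp_RandPoisson} and a variable $x\in S$. Suppose, for contradiction, that some $\tau\in\cS(\PHI_t)$ with $\tau(x)\neq\SIGMA_t(x)$ satisfies $d(\SIGMA_t,\tau)<\chi n$, and let $T=\cbc{y\in V_t:\tau(y)\neq\SIGMA_t(y)}$, $T'=T\cap S$. Then $x\in T'$, so $T'$ is nonempty and of size at most $\chi n$. Each $y\in T'$ supports at least two clauses $\cC$ of $\PHI_t$ whose variables all lie in $S$; since $y$'s literal is the \emph{unique} true literal of $\cC$ under $\SIGMA_t$, for $\tau$ to satisfy $\cC$ at least one further variable of $\cC$ must be flipped, and this variable lies in $\cC\subseteq S$, hence in $T'$. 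Distinct supporters yield distinct supported clauses, so the number of clauses of $\PHI_t$ with at least two variables in $T'$ is at least $2|T'|$, contradicting the expansion property.

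I expect the main obstacle to be the quantitative expansion estimate: one must carefully track the dependence of $\chi$ on $k$ and $\rho$ throughout the union bound so that $\chi>0$ uniformly in the full range $\rho_0\leq\rho\leq k\ln2-2\ln k$, and one must verify that the resulting failure probability is comfortably smaller than the $\exp(-\rho n/2^k)$ threshold needed by \Thm~\ref{XCor_SATExchange}. A minor additional point is to check that the planted conditioning on $\SIGMA'$ does not spoil the binomial-style estimates, which is immediate because conditioning only restricts the sample space of clauses, leaving each clause uniformly distributed on the satisfied set.
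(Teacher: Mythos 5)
Your avalanche argument from a self-contained set is exactly right and matches the paper's: if $\tau$ differs from $\SIGMA_t$ on $x\in S$ and $T'=\{y\in S:\tau(y)\neq\SIGMA_t(y)\}$, then each $y\in T'$ contributes two distinct supported clauses inside $S$, each of which must contain a second flipped $S$-variable, so $\Phi_t$ has $\geq 2|T'|$ clauses with $\geq 2$ variables in $T'$ --- a contradiction once one has the expansion property for sets of size $\leq\chi n$. The paper proves this in Prop.~\ref{Prop_frozen} via Lemma~\ref{Lemma_exp_small} with precisely this structure.

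However, there is a genuine gap in how you establish the expansion property, and it invalidates the transfer step. The union bound you compute gives a tail of the form $\sum_{s=1}^{\chi n}\bigl[C(k,\rho)\,s/n\bigr]^{s}$, and this is \emph{not} $\exp(-\Omega(n))$: the sum is dominated by the smallest values of $s$ (the $s=1$ or $s=2$ term is $\Theta(n^{-1})$ or $\Theta(n^{-2})$), so the best you can extract is a polynomially small failure probability, not an exponentially small one. Compare the paper's own computation in Lemma~\ref{Lemma_exp_small}, which ends with $\Erw\sum_q X_q=O(n^{-0.1})$. Consequently the hypothesis of Thm.~\ref{XCor_SATExchange}, namely failure probability $\leq\exp(-\rho n/2^k)$ in the planted model, is not met, and the claimed transfer from \textbf{P1}--\textbf{P4} to \textbf{U1}--\textbf{U4} does not go through. (A smaller issue: your stated $\chi=c/(k^4\rho^2)$ cannot be right either, since $r\sim\rho 2^k/k$ puts a $4^k$ factor into $C(k,\rho)$; one gets $\chi$ exponentially small in $k$, which is still fine for the statement but not the formula you give.)

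The way the paper avoids this is to notice that expansion is a property of the \emph{formula alone}: the event ``no set $Q$ with $1\leq|Q|\leq\chi n$ has $\geq2|Q|$ clauses containing two $Q$-variables'' makes no reference to the satisfying assignment $\sigma$ and is invariant under negating literals. So it can be proved for the \emph{unconditioned} random formula $\PHI$ by the same first-moment union bound (Lemma~\ref{Lemma_exp_small}), with failure probability $O(n^{-0.1})$, which is all one needs for a ``whp'' statement. Since $\PHI$ is satisfiable whp and the event is sign-symmetric, it remains whp under the $\cU_k(n,m)$ distribution conditioned on $\SIGMA=\vecone$, with no appeal to the planted-to-uniform transfer theorem. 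Finally, every clause of $\Phi_t$ arises from a distinct clause of $\Phi$ by deleting some literals, so any clause of $\Phi_t$ with two variables in $T'\subseteq V_t$ yields a clause of $\Phi$ with those same two variables; expansion of $\Phi$ therefore gives expansion of $\Phi_t$ for free, and your contradiction argument closes the proof. In short: your combinatorial core is fine; replace the planted-model expansion estimate and the Thm.~\ref{XCor_SATExchange} transfer by a direct first-moment bound on $\PHI$ itself.
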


\Prop s~\ref{XProp_RandPoisson} and~\ref{XProp_frozen} directly imply part~1 of \Thm~\ref{XThm_dRSB}.
Self-contained sets also play a key role in the proof of \Thm~\ref{XThm_cond}.
\Prop s~\ref{XProp_RandPoisson} and~\ref{XProp_frozen} can be extended to the regime of $\theta$
as in \Thm~\ref{XThm_cond}, and the set $R$ in part~4 of that theorem is simply a $t$-self-contained set.
Expansion properties of the random formula together with the bound on the diameter of
the set $\cS(\PHI_t)$ of satisfying assignments from part~2 of  \Thm~\ref{XThm_cond}
imply that there are no two satisfying assignments that disagree on more than $kn/2^k$ variables from $R$.
In combination with a double-counting argument, this implies the statement on the marginals in part~3
of  \Thm~\ref{XThm_cond}.
Finally, the claim about forced variables in \Thm~\ref{XThm_forced} can be proved via a similar
(but simpler) argument as sketched in this section.

\newpage
%\topmargin 0pt
%\advance \topmargin by -\headheight
%\advance \topmargin by -\headsep
%     
%\textheight 8.9in
%     
%\oddsidemargin 0pt
%\evensidemargin \oddsidemargin
%\marginparwidth 0.5in
%     
%\textwidth 6.5in

\begin{appendix}
\noindent{\Large\bf Appendix}

\bigskip
\noindent
Appendix~\ref{Apx_BP} contains a discussion of Belief Propagation.
The remaining appendices contain the full proofs of the results stated in \Sec~\ref{Sec_results}.
Appendix~\ref{Apx_overview} gives an overview of how the proofs are organized.

\section{Detailed description of BP decimation}\label{Apx_BP}

The BP decimation algorithm can be viewed as an attempt at implementing
the decimation process (Experiment~\ref{Exp_dec}).
As mentioned earlier, the key issue with this is the computation (or approximation) of the
the marginals $M_x(\Phi_{t-1})$.
BP decimation basically tries to approximate these marginals by means of a `local' computation.

For clearly, the marginals $M_x(\Phi_{t-1})$ are influenced by `local' effects.
For instance, if $x$ occurs in a unit clause $a$ of $\Phi_{t-1}$, i.e., a clause of length one,
 then $x$ \emph{must} be assigned so as to satisfy $a$.
Hence, if $x$ appears in $a$ positively, then $M_x(\Phi_{t-1})=1$, and otherwise $M_x(\Phi_{t-1})=0$.
Similarly, if $x$ occurs \emph{only} positively in $\Phi_{t-1}$, then $M_x(\Phi_{t-1})\geq1/2$.
More intricately, if $x$ occurs in a clause $a$ that contains another variable $y$ that appears is a unit clause $b$,
then this will affect the marginal of $x$.

The key hypothesis underlying {\tt BPdec} is that in random formulas
such local effects \emph{determine} the marginals $M_x(\Phi_{t-1})$ asymptotically.
To define `local' precisely, we need a metric on the variables/clauses.
This metric is the one induced by the \emph{factor graph} $G=G(\Phi_{t-1})$ of $\Phi_{t-1}$,
which is a bipartite graph whose vertices are the variables $V_{t-1}=\cbc{x_t,\ldots,x_n}$ and the clauses of $\Phi_{t-1}$.
Each clause is adjacent to the variables that occur in it.
For an integer $\omega\geq1$ let $N^\omega(x_t)$ signify the set of all vertices of $G$ that have distance at
most $2\omega$ from $x_t$.
Then the induced subgraph $G\brk{N^{\omega}(x_t)}$ corresponds to the sub-formula
of $\Phi_{t-1}$ obtained by removing all clauses and variables at distance more than $2\omega$ from $x_t$.
Note that all vertices at distance precisely $2\omega$ are variables, so that any satisfying assignment of
$\Phi$ induces a satisfying assignment of the sub-formula.
Let us denote by $M_{x_t}(\Phi_{t-1},\omega)$ the marginal probability that $x_t$ takes the value $1$
in a random satisfying assignment of this sub-formula.

Of course, for a worst-case formula $\Phi$ the `local' marginals $M_{x_t}(\Phi_{t-1},\omega)$ may
be just as difficult to compute as the overall marginals $M_{x_t}(\Phi_{t-1})$ themselves.
Therefore,  BP decimation employs an efficient dynamic programming heuristic called \emph{Belief Propagation} (`BP'),
which yields certain values $\mu_{x_t}(\Phi_{t-1},\omega)\in\brk{0,1}$;
we will state this heuristic below.
If the induced subgraph $G\brk{N^{\omega}(x_t)}$ is a tree, then indeed
$\mu_{x_t}(\Phi_{t-1},\omega)=M_{x_t}(\Phi_{t-1},\omega)$.
Moreover, standard arguments show that in a random formula $\PHI$ actually
$G\brk{N^{\omega}(x_t)}$ is a tree \whp\ so long as $\omega=o(\ln n)$.
Of course, more generally, in order to obtain an efficient algorithm it would be sufficient
for the BP outcomes $\mu_{x_t}(\Phi_{t-1},\omega)$ to approximate the true overall marginals $M_{x_t}(\Phi_{t-1})$ well for some
polynomially computable and polynomially bounded function $\omega=\omega(n)\geq1$.

To define the numbers $\mu_{x_t}(\Phi_{t-1},\omega)$ formally,
we need to define Belief Propagation for $k$-SAT. 
To this end, let $N(v)$ denote the neighborhood of a vertex $v$ of the factor graph $G(\Phi_{t-1})$.
For a variable $x\in V_t$ and a clause $a\in N(x)$ we will denote the ordered pair $(x,a)$ by $x\ra a$.
Similarly, $a\ra x$ stands for the pair $(a,x)$.
Furthermore, we let $\sign(x,a)=1$ if $x$ occurs in $a$ positively, and  $\sign(x,a)=-1$ otherwise.

The \emph{message space} $M(\Phi_{t-1})$ is the set of all tuples
	$(\mu_{x\ra a}(\zeta))_{x\in V_t,\,a\in N(x),\,\zeta\in\{0,1\}}$ such that $\mu_{x\ra a}(\zeta)\in\brk{0,1}$
and $\mu_{x\ra a}(0)+\mu_{x\ra a}(1)=1$ for all $x,a,\zeta$.
For $\mu\in M(\Phi)$ we define 
	$\mu_{a\ra x}(\zeta)=1$ if $\zeta=(1+\sign(x,a))/2$,
and 
	\begin{equation}\label{eqBPai}
	\mu_{a\ra x}(\zeta)=
			1-\hspace{-4mm}\prod_{y\in N(a)\setminus\cbc x}\hspace{-4mm}
				\mu_{y\ra a}\bc{\frac{1-\sign(y,a)}2}
	\end{equation}
otherwise.
Furthermore, we define the {\em belief propagation operator} $\BP$ 
as follows:
for any $\mu\in M(\Phi_{t-1})$ we define $\BP(\mu)\in M(\Phi_{t-1})$ by letting 
	\begin{eqnarray}%\nonumber
	(\BP(\mu))_{x\ra a}(\zeta)&=&
	\frac{\displaystyle\prod_{b\in N(x)\setminus\cbc a}\mu_{b\ra x}(\zeta)}
			{\displaystyle\prod_{b\in N(x)\setminus\cbc a}\mu_{b\ra x}(0)+
			\prod_{b\in N(x)\setminus\cbc a}\mu_{b\ra x}(1)}
			\label{eqBPia}
	\end{eqnarray}
unless the denominator equals zero, in which case $(\BP(\mu))_{x\ra a}(\zeta)=\frac12$.

Finally, the values $\mu_x(\Phi_t,\omega)$ are defined as follows.
Let $\mu\brk0=\frac12\cdot\vecone\in M(\Phi_{t-1})$ be the vector with all entries equal to $\frac12$.
Moreover, define inductively $\mu\brk{\ell}=\BP(\mu\brk{\ell-1})$ for $1\leq\ell\leq\omega$.
Then
	\begin{eqnarray}%\nonumber
	\mu_x(\Phi_{t-1},\omega)&=&
		\frac{\displaystyle\prod_{b\in N(x)}\mu_{b\ra x}(1)\brk\omega}
			{\displaystyle\prod_{b\in N(x)}\mu_{b\ra x}(0)\brk\omega+
			\prod_{b\in N(x)}\mu_{b\ra x}(1)\brk\omega}
		\label{eqBPmarginal}
	\end{eqnarray}
for any $x\in V_t$,
unless the denominator is zero, in which case we set $\mu_x(\Phi_{t-1},\omega)=\frac12$.

\begin{figure}
\noindent{
\begin{algorithm}\label{Alg_BPdec}\upshape\texttt{BPdec$(\Phi)$}\\\sloppy
\emph{Input:} A $k$-CNF $\Phi$ on $V=\cbc{x_1,\ldots,x_n}$.\\
\emph{Output:} An assignment $\sigma:V\rightarrow\cbc{0,1}$.
\vspace{-2mm}
\begin{tabbing}
mmm\=mm\=mm\=mm\=mm\=mm\=mm\=mm\=mm\=\kill
{\algstyle 0.}	\> \parbox[t]{30em}{\algstyle
	Let $\Phi_0=\Phi$.}\\
{\algstyle 1.}	\> \parbox[t]{30em}{\algstyle
	For $t=1,\ldots,n$ do}\\
{\algstyle 2.}	\> \> \parbox[t]{28em}{\algstyle
		Use BP to compute $\mu_{x_t}(\Phi_{t-1},\omega)$.}\\
{\algstyle 3.}	\> \> \parbox[t]{28em}{\algstyle
		Assign $\sigma(x_t)=1$ with probability $\mu_{x_t}(\Phi_{t-1},\omega)$, and let $\sigma(x_t)=0$ otherwise.}\\
{\algstyle 4.}	\> \> \parbox[t]{28em}{\algstyle
		Obtain the formula $\Phi_t$ from $\Phi_{t-1}$ by substituting the value $\sigma(x_t)$ for $x_t$ and simplifying.}\\
{\algstyle 5.}	\> \parbox[t]{30em}{\algstyle
	Return the assignment $\sigma$.}
\end{tabbing}
\end{algorithm}}
\caption{The BP decimation algorithm.}\label{Fig_BPdec}
\end{figure}

The intuition here is that the $\mu_{x\ra a}(\zeta)$ are `messages' from a variable $x$ to the clauses $a$ in which
$x$ occurs, indicating how likely $x$ were to take the value $\zeta$ if clause $a$ were removed from the formula.
Based on these, (\ref{eqBPai}) yields messages $\mu_{a\ra x}(\zeta)$ from clauses $a$ to variables $x$, indicating
the probability that $a$ is satisfied if $x$ takes the value $\zeta$ and all other variables $y\in N(a)\setminus\cbc x$ are assigned
independently to either value $\xi\in\cbc{0,1}$ with probability $\mu_{y\ra a}(\xi)$.
The BP operator~(\ref{eqBPia}) then uses these messages $\mu_{a\ra x}$ in order to `update' the messages
from variables to clauses.
More precisely, for each $x$ and $a\in N(x)$ the new messages $(\BP(\mu))_{x\ra a}(\zeta)$ are computed
under the hypothesis that all other clauses $b\in N(x)\setminus\cbc a$ are satisfied with probabilities $\mu_{b\ra x}(\zeta)$ independently
if $x$ takes the value $\zeta$.
Finally, the difference between~(\ref{eqBPia}) and~(\ref{eqBPmarginal}) is that the latter product runs over \emph{all} clauses $b\in N(x)$.
An inductive proof shows that, if for a variable $x$ the subgraph $G\brk{N^\omega(x)}$ of the factor graph is a tree, then
in fact $\mu_x(\Phi_t,\omega)=M_x(\Phi_t,\omega)$~\cite{BMZ}.
 Figure~\ref{Fig_BPdec} shows the BP decimation algorithm.

\section{Overview}\label{Apx_overview}

In \Sec~\ref{Sec_results} we described the main results of this paper
arranged according to the various phases that the decimation process passes through.
But to prove these results, it is necessary to proceed in a different order.
To facilitate this, we will state the main results in the order in which the proofs proceed.
We begin with the statements on the loose/rigid/forced variables.

\begin{theorem}\label{Thm_vars}
There exist constants $k_0,\rho_0>0$ such that for all $k\geq k_0$ and $\rho_0\leq\rho\leq k\ln2-2\ln k$ the following three statements hold
for a random pair $(\PHI_t,\sigma_t)$ chosen from the experiment {\bf U1}--{\bf U4} \whp
\begin{enumerate}
\item If
		$k\theta>\exp\brk{\rho\bc{1+\frac{\ln\ln\rho}{\rho}+\frac{10}{\rho}}},$
	then 
		at least $0.99\theta n$ variables $x\in V_t$ are loose \whp
\item If
	$1<k\theta<\exp\brk{\rho\bc{1-\frac{3\ln\rho}{\rho}}},$ then 
		at least $\rho^3\exp(-\rho)\theta n$ 
				%$0.99\theta n$
				variables $x\in V_t$ are $\Omega( n)$-rigid \whp
\item If
		$\ln(n)/n<\theta<(\ln(\rho)-10)/k,$ then at least $0.99\theta n$ variables
	are forced \whp
\end{enumerate}
\end{theorem}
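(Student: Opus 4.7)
The plan is to run all three arguments in the planted experiment {\bf P1}--{\bf P4} and then transfer to {\bf U1}--{\bf U4} via \Thm~\ref{XCor_SATExchange}. In that model, conditional on $\SIGMA'$ each of the $m$ clauses is an independent uniform draw from the $(2^k-1)\bink{n}{k}$ $\SIGMA'$-satisfied clauses, so for any $x\in V_t$ the number $S_x$ of supported clauses is Binomial with mean $\rho/(1-2^{-k})\approx\rho$, and in particular $\pr\brk{S_y=0}\approx\exp(-\rho)$. I call $y\in V_t$ \emph{nice} if $S_y=0$; this notion unifies the three arguments.

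For part~1 (loose), the plan is to show that for $\geq 0.99\theta n$ variables $x\in V_t$, every clause of $\PHI_t$ supported by $x$ contains at least one nice variable; then flipping $x$ together with one nice variable per supported clause of $x$ produces a satisfying assignment (a nice variable supports nothing, so flipping it unsatisfies nothing), the flip set has size $1+S_x=O(\rho)\ll\ln n$, and $x$ is loose. Given $S_x=j$, each supported clause has $k-1$ other literals whose variables are essentially uniform in $V$, so the probability that none is a nice $V_t$-variable is at most $(1-\theta\exp(-\rho))^{k-1}\leq\exp\brk{-k\theta\exp(-\rho)/2}$; a union bound over $j\leq 2\rho$ supported clauses (via standard Binomial tail bounds on $S_x$) gives failure probability at most $2\rho\exp\brk{-k\theta\exp(-\rho)/2}$, which under the hypothesis $k\theta>\exp\brk{\rho(1+(\ln\ln\rho)/\rho+10/\rho)}$ is well below $10^{-2}$. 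Markov's inequality followed by \Thm~\ref{XCor_SATExchange} delivers the $0.99\theta n$ bound. Part~3 (forced) is a direct count: a clause of $\PHI'$ collapses to a unit clause containing $x$ in $\PHI'_t$ iff the other $k-1$ variables of that clause lie in $\cbc{x_1,\ldots,x_t}$ and each of their literals is false under $\SIGMA'$. Summing gives $\Erw\brk{\#\text{unit clauses}}\sim\theta n\rho\exp(-k\theta)/\ln 2$, and under $k\theta<\ln\rho-10$ this is $\geq e^{10}\theta n$. A Chernoff bound over the $m$ independent clauses, combined with a crude bound showing few unit clauses share a variable, yields at least $0.99\theta n$ distinct forced variables w.h.p.

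For part~2 (rigid), I would extend the self-contained-set machinery of \Prop s~\ref{XProp_Supp}--\ref{XProp_frozen} to the broader range of $\theta$ in this theorem. The relevant small parameter is now $\eta:=k\theta\exp(-\rho)<\rho^{-3}$, replacing the $\zeta=\rho^2\exp(-\rho)$ used in \Sec~\ref{Sec_rigidOutline}; two rounds of the same peeling argument show the number of variables outside any $t$-self-contained set is at most $O(\eta)\theta n$ in expectation, which leaves a $t$-self-contained set of size at least $(1-O(\rho^{-3}))\theta n\geq\rho^3\exp(-\rho)\theta n$, and \Prop~\ref{XProp_frozen} then applies verbatim to render each of its members $\Omega(n)$-rigid. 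The main obstacle will be part~1: the niceness indicators $\mathbf{1}\{S_y=0\}$ for distinct $y$'s are only weakly independent under the planted measure, and transferring from the planted to the uniform model via \Thm~\ref{XCor_SATExchange} requires the failure probability to be exponentially small rather than just $o(1)$. Pushing the union bound above through to the threshold $\exp(-\rho n/2^k)$ demanded by \Thm~\ref{XCor_SATExchange} --- by upgrading the clause-wise Chernoff arguments to simultaneous concentration over all $x\in V_t$ and by controlling the conditioning on $\PHI$ being satisfiable --- is the delicate technical step.
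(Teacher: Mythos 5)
Your part~2 follows the paper's own route (self-contained sets plus the expansion lemma behind \Prop~\ref{XProp_frozen}), but parts~1 and~3 each have a genuine gap. For part~1, the claim ``a nice variable supports nothing, so flipping it unsatisfies nothing'' is false for \emph{simultaneous} flips: a clause $U$ that contains two of the flipped variables positively as its only true literals becomes unsatisfied even though neither of them supports $U$ (support means being the \emph{unique} true literal). This is exactly the case the paper has to rule out, and it does so by restricting attention to \emph{tame} variables (all but $o(n)$ of them, \Prop~\ref{Prop_tame}, i.e.\ variables whose distance-$3$ factor-graph neighbourhood is acyclic and of size $\leq\ln n$) and showing that any such bad clause $U$ would close a short cycle through $x$, contradicting tameness; tameness is also what gives the $\dist(\sigma,\tau)\leq\ln n$ bound, not just the count $1+S_x=O(\rho)$. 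Without an argument of this kind your flip set need not produce a satisfying assignment, so the conclusion ``$x$ is loose'' does not follow.

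The second gap is the transfer step, which you flag but leave unresolved, and it bites in two places. The paper does not fight the weak dependence of the indicators $\mathbf{1}\{S_y=0\}$: it works in the Bernoulli planted model $\cP_k'(n,m)$ (\Cor~\ref{Cor_Pnm'}), where by \Lem~\ref{Lemma_SuppBin} the support counts of distinct variables are genuinely independent binomials, so Chernoff bounds give failure probabilities of the form $\exp(-\Omega(n))$ for the $1$-loose/$2$-loose counts; your per-variable constant failure probability followed by Markov's inequality only yields $o(1)$-type statements, which is not enough to invoke \Thm~\ref{XCor_SATExchange} (it needs $1-\exp(-\rho n/2^k)$). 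Worse, for part~3 no such exponential-in-$n$ planted bound is even attainable when $\theta$ is as small as $\ln(n)/n$, since the event concerns only $O(\ln n)$ variables and clauses. The paper's fix is a monotonicity argument your proposal lacks: prove the statement at $\theta_0=(\ln\rho-10)/k$ (a constant fraction of variables, where Chernoff does give $1-\exp(-\Omega(n))$), observe that a variable forced in $\PHI_{t_0}$ remains forced after further decimation, and then control $|\cF\cap V_t|$ by hypergeometric concentration (Chebyshev, using only $\theta n\gg1$). Without this two-step reduction your ``direct count plus Chernoff'' cannot cover the stated range of $\theta$.
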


The second type of statement concerns the global structure of the set of satisfying assignments,
summarized in the following theorem.

\begin{theorem}\label{Thm_shape}
There exist constants $k_0,\rho_0>0$ such that for all $k\geq k_0$, 
and $\rho_0\leq\rho\leq k\ln2-2\ln k$ the following three statements hold.
\begin{enumerate}
\item If 
		$$\frac{\rho}{\ln2}(1+\rho^{-2}+2^{2-k})\leq k\theta\leq\exp\brk{\rho\bc{1-\frac{\ln\rho}\rho-\frac2\rho}}$$
		then $\cS(\PHI_t)$ is $(\exp(2-\rho)-\eps,\exp(2-\rho)+\eps)$-shattered \whp\
		for some $\eps=\eps(k,\rho)>0$.
\item If $\theta<(\rho-1/\rho)/\bc{k\ln 2}$, then $\cS(\PHI_t)$ is $\exp(2-\rho)$-condensed \whp
\item If $\theta>\rho(1+2/\rho^2)/(k\ln 2)$, then the average distance between two random elements
		of $\cS(\PHI_t)$ is at least $0.49\theta n$ \whp
\end{enumerate}
\end{theorem}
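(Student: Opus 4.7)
The plan is to carry out a first-moment analysis of $X_\alpha(\PHI_t',\SIGMA_t')$ in the planted model {\bf P1}--{\bf P4}, and then transfer the conclusions to the genuine distribution {\bf U1}--{\bf U4} via \Thm~\ref{XCor_SATExchange}. As sketched before Claim~\ref{XClaim_shattering}, the expectation satisfies $\frac1n\ln\Erw X_\alpha(\PHI_t',\SIGMA_t')\leq\psi(\alpha)$ with
\[
\psi(\alpha)=-\alpha\theta\ln\alpha-(1-\alpha)\theta\ln(1-\alpha)+\frac{2^k\rho}{k}\ln\!\left(1-\frac{1-(1-\alpha\theta)^k}{2^k-1}\right).
\]
All three parts of the theorem reduce to locating $\alpha$-ranges on which $\psi(\alpha)<-\rho/2^k$, the $\rho/2^k$ slack being what \Thm~\ref{XCor_SATExchange} requires. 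On any such range, Markov's inequality yields $X_\alpha(\PHI_t',\SIGMA_t')=0$ in the planted model with probability $\geq 1-\exp(-\rho n/2^k)$, hence $X_\alpha(\PHI_t,\SIGMA_t)=0$ \whp\ in the true model.

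For part~1 (shattering) the first step is to establish Claim~\ref{XClaim_shattering} by showing via calculus that for every $k\theta$ in the stated window $\psi$ has a dip below $-\rho/2^k$ on an interval $(a_1,a_2)$ of the form $\exp(2-\rho)\pm\eps$: the constraint contribution produces a sharp negative term of size roughly $\rho\alpha\theta$ near $\alpha\approx\exp(2-\rho)$ that overwhelms the entropy $-\theta[\alpha\ln\alpha+(1-\alpha)\ln(1-\alpha)]$. Combining the first-moment bound with the lower bound on $|\cS(\PHI_t)|$ supplied by \Cor~\ref{XCor_count} then yields inequality~(\ref{Xeqsh2}) of the claim. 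Finally, the greedy decomposition $R_1,R_2,\ldots$ described immediately after the claim produces the shattered partition; the only remaining step is showing $|R_0|\leq\exp(-\Omega(n))|\cS(\PHI_t)|$, which follows because every $\sigma\in R_0$ must violate either~(\ref{Xeqsh1}) or~(\ref{Xeqsh2}) and, by Fact~\ref{XFact_UD}, almost all $\sigma\in\cS(\PHI_t)$ obey both.

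For part~2 (condensation) the same first-moment strategy is pushed through, but with $\psi(\alpha)<-\rho/2^k$ required for \emph{every} $\alpha\in(\exp(2-\rho),1]$. Under $k\theta<(\rho-1/\rho)/\ln 2$ the entropy is bounded by $\theta\ln 2$, which falls short of the magnitude of the constraint term throughout this range of $\alpha$; a monotonicity argument together with the local analysis already carried out near $\exp(2-\rho)$ gives the uniform bound, and a union bound over the $O(\theta n)$ integer values of $\alpha\theta n$ yields $X_\alpha(\PHI_t,\SIGMA_t)=0$ simultaneously for every $\alpha>\exp(2-\rho)$, which is the claim. For part~3 (average distance) I would rewrite the pairwise average as $\theta n\cdot\Erw_{\SIGMA_t}[\sum_\alpha\alpha\, X_\alpha(\PHI_t,\SIGMA_t)/|\cS(\PHI_t)|]$ and argue that only an exponentially small fraction of the mass comes from $\alpha<0.49$: on the bulk interval $(\alpha_\ast,0.49)$ with $\alpha_\ast$ of order $\exp(-\rho)$ the $\psi$-bound combined with \Cor~\ref{XCor_count} controls $\sum_\alpha X_\alpha/|\cS(\PHI_t)|$, while the short head $[0,\alpha_\ast]$ is bounded by the trivial volume estimate for Hamming balls, which is negligible against the lower bound on $|\cS(\PHI_t)|$ once $k\theta\geq\rho(1+2/\rho^2)/\ln 2$.

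The main obstacle is the analytic verification that $\psi(\alpha)+\rho/2^k<0$ uniformly across the three $\alpha$-ranges with the correct quantitative dependence on $k$ and $\rho$. Near $\alpha=\exp(2-\rho)$ one must expand the constraint term to order $\rho^{-1}$ and match it against the binary entropy; near $\alpha\approx\tfrac12$ one must exploit the precise threshold $k\theta\sim\rho/\ln 2$ to dominate the entropy contribution of order $\theta\ln 2$; and because the slack $\rho/2^k$ required by the transfer theorem is exponentially small in $k$, the various margins must be tracked carefully with essentially no room to spare.
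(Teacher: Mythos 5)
Your overall strategy is the paper's: a first-moment analysis of $X_\alpha$ in the planted model {\bf P1}--{\bf P4}, transfer to {\bf U1}--{\bf U4} via \Thm~\ref{XCor_SATExchange}, calculus verification that $\psi(\alpha)+\rho/2^k<0$ on the relevant $\alpha$-ranges, the lower bound on $|\cS(\PHI_t)|$ from \Cor~\ref{XCor_count}, and the greedy cluster decomposition for part~1; this matches \Prop~\ref{Prop_icy_sat}, \Prop~\ref{Prop_cond} and \Lem~\ref{Lemma_overlap}.

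However, your argument that $|R_0|\leq\exp(-\Omega(n))|\cS(\PHI_t)|$ has a genuine gap. You derive it from ``almost all $\sigma\in\cS(\PHI_t)$ obey both'' via Fact~\ref{XFact_UD}, but ``almost all'' only gives a $1-o(1)$ guarantee, which yields $|R_0|=o(|\cS(\PHI_t)|)$, not the exponentially small fraction that {\bf SH1} demands. The paper gets the stronger statement by carrying an explicit \emph{exponential} slack through the first-moment calculation: in Lemma~\ref{Lemma_icy_sat} one insists on $\psi(\alpha)<-\rho/2^k-2\xi_1$ rather than merely $\psi(\alpha)<-\rho/2^k$, so the planted-model failure probability is $\exp(-(\xi+\rho/2^k)n)$; after the transfer, a Markov argument over formulas then shows that w.h.p.\ only an $\exp(-\Omega(n))$-fraction of $\cS(\PHI_t)$ is ``bad.'' Without tracking that additional $\xi$, the bound on $|R_0|$ does not follow. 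A secondary caveat: for part~2 your ``monotonicity argument together with the local analysis near $\exp(2-\rho)$'' undersells the calculus --- $\psi$ is not monotone on $(\exp(2-\rho),1]$, and the paper's \Lem~\ref{Lemma_cond} splits into four separate $\alpha$-regimes (small $\alpha$, two intermediate regimes, and $\alpha$ bounded away from $0$) each requiring a different estimate of the entropy-versus-constraint trade-off; a single monotonicity step will not deliver the required uniform bound $\psi(\alpha)<-\rho/2^k$ on the whole interval.
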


The next theorem contains the statements about the marginals of the truth values of individual variables.

\begin{theorem}\label{Thm_marginals}
There exist constants $k_0,\rho_0>0$ such that for all $k\geq k_0$, 
and $\rho_0\leq\rho\leq k\ln2-2\ln k$ the following two statements hold.
\begin{enumerate}
\item If $\theta\geq\frac\rho{k\ln 2}(1+1/\rho^2+k/2^{k-2})$, then \whp\ for at least $\theta n/3$ variables $x\in V_t$ we have
		$$M_x(\PHI_t)\in\brk{0.01,0.99}.$$
\item If $\ln(n)/n<\theta<\rho(1-1/\rho^2)/(k\ln 2)$, then \whp\
		for all but $\exp(-\rho)\theta n$ variables $x\in V_t$ we have
		$$M_x(\PHI_t)\in\brk{0,2^{-k/2}}\cup\brk{1-2^{-k/2},1}
				.$$
\end{enumerate}
\end{theorem}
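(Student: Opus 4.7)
The plan is to deduce both parts from the elementary double-counting identity
\begin{equation*}
\sum_{x\in V_t}2M_x(\PHI_t)(1-M_x(\PHI_t))=\frac{1}{\abs{\cS(\PHI_t)}^2}\sum_{\sigma,\tau\in\cS(\PHI_t)}d(\sigma,\tau),
\end{equation*}
which holds because for $\sigma,\tau$ drawn independently and uniformly from $\cS(\PHI_t)$ the probability that they disagree on any fixed variable $x$ is exactly $2M_x(\PHI_t)(1-M_x(\PHI_t))$; summing over $x$ gives the expected Hamming distance. So the left-hand side is precisely the average pairwise Hamming distance in $\cS(\PHI_t)$.

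For part 1, the hypothesis on $\theta$ implies the assumption of \Thm~\ref{Thm_shape} (3) (the term $k/2^{k-2}$ being negligible for large $k$), so the average pairwise distance is at least $0.49\theta n$ \whp\ Partitioning $V_t=A\cup B$ with $A=\cbc{x:M_x(\PHI_t)\in\brk{0.01,0.99}}$ and using $2p(1-p)\leq1/2$ on $[0,1]$ together with $2p(1-p)<0.02$ for $p\notin[0.01,0.99]$ yields
\begin{equation*}
0.49\theta n\leq\tfrac12\abs A+0.02\abs B\leq\tfrac12\abs A+0.02\theta n,
\end{equation*}
so $\abs A\geq 0.94\theta n\geq\theta n/3$ with plenty of room to spare.

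For part 2 I would combine the self-contained set $R\subset V_t$ from \Prop~\ref{XProp_RandPoisson} (extended to the condensation window of \Thm~\ref{XThm_cond}) with the diameter bound $\cS(\PHI_t)$ is $\exp(2-\rho)/k$-condensed from \Thm~\ref{Thm_shape} (2) and the standard expansion properties of the random factor graph. As in the argument behind part 4 of \Thm~\ref{XThm_cond}, this yields that \whp\ every pair $\sigma,\tau\in\cS(\PHI_t)$ satisfies $\abs{\cbc{x\in R:\sigma(x)\neq\tau(x)}}\leq k2^{-k}n$. Restricting the identity above to $R$ then gives $\sum_{x\in R}2M_x(1-M_x)\leq k2^{-k}n$. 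Since any variable with $M_x\in(2^{-k/2},1-2^{-k/2})$ contributes at least $2\cdot2^{-k/2}(1-2^{-k/2})\geq 2^{-k/2}$ to this sum, at most $O(kn/2^{k/2})$ variables of $R$ can have marginals in that open interval, and this is $o(\exp(-\rho)\theta n)$ throughout the regime $k\theta>\ln\rho$. The rest of the bad variables must come from $V_t\setminus R$.

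The main obstacle is the quantitative bookkeeping in part 2: \Prop~\ref{XProp_RandPoisson} as stated leaves $3\rho^2\exp(-\rho)\theta n$ variables outside $R$, which exceeds the $\exp(-\rho)\theta n$ budget by a factor of $\rho^2$. The remedy is to sharpen the Poissonization step behind \Prop~\ref{XProp_Supp} so that the complement of the self-contained set has size at most, say, $\tfrac12\exp(-\rho)\theta n$; this is a straightforward but care-demanding computation involving the binomial tails of $S_x$. Everything else — the transfer from the planted model via \Thm~\ref{Cor_SATExchange}, and the concentration of the pairwise-distance statistic — is routine in light of the earlier sections.
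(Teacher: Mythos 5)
Your approach to part~1 is, in essence, exactly what the paper does: the paper builds an auxiliary graph on pairs $(x,\tau)$ and counts edges, which is just a rephrasing of your identity $\sum_x 2M_x(1-M_x)=$ average pairwise Hamming distance; both then invoke the lower bound $0.49\theta n$ coming from \Lem~\ref{Lemma_overlap}. Your arithmetic is cleaner and even yields $\abs A\geq 0.94\theta n$, stronger than the stated $\theta n/3$. One small nit: the hypothesis $\theta\geq\frac\rho{k\ln 2}(1+1/\rho^2+k/2^{k-2})$ does \emph{not} formally imply the hypothesis $\theta>\frac\rho{k\ln 2}(1+2/\rho^2)$ of \Thm~\ref{Thm_shape}(3) when $\rho$ is bounded and $k$ is large (since then $k/2^{k-2}<1/\rho^2$). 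The correct citation is \Lem~\ref{Lemma_overlap} directly, whose hypothesis matches the one in \Thm~\ref{Thm_marginals}(1) verbatim; the conclusion you need is the same.

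For part~2 you correctly spot the Poissonization gap: the self-contained set $R$ from \Prop~\ref{XProp_RandPoisson} leaves $3\zeta\theta n\approx 3\rho^2\exp(-\rho)\theta n$ variables outside, exceeding the $\exp(-\rho)\theta n$ budget by a factor $\rho^2$. But there is a second, more serious gap that you do not flag and in fact claim is fine. Your Markov step inside $R$ gives at most roughly $k2^{-k/2}n$ variables with $M_x\in(2^{-k/2},1-2^{-k/2})$, and you assert this is $o(\exp(-\rho)\theta n)$ throughout the window $k\theta>\ln\rho$. That is false at the high-$\rho$ end of the admissible range: for $\rho$ close to $k\ln 2-2\ln k$ one has $\exp(-\rho)\approx k^2/2^k$, so $\exp(-\rho)\theta n\lesssim k^2 n/2^k$, whereas $k 2^{-k/2}n \gg k^2 n/2^k$. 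Thus Markov alone does not reach the $\exp(-\rho)\theta n$ budget inside $R$ either. It is worth noting that the paper's own proof of part~2 has the same issue — it concludes only ``$\mu(x)\le 2^{-k/2}$ for all but $0.01\theta n$ variables $x\in S$'' — i.e.\ it proves the $0.01\theta n$ version (which is all that is used in \Thm~\ref{XThm_cond} part~3), not the $\exp(-\rho)\theta n$ bound stated in \Thm~\ref{Thm_marginals}(2). So your sketch matches the paper's route and achieves what the paper's proof actually achieves, but neither your argument nor the paper's, as written, delivers the $\exp(-\rho)\theta n$ budget; closing it would require both the sharper Poissonization you propose and a replacement for the Markov step (e.g., a threshold weaker than $2^{-k/2}$, or a bound on $\sum_x 2M_x(1-M_x)$ over $R$ that scales with $\exp(-\rho)$ rather than $k2^{-k}$).
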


\Thm s~\ref{XThm_sym}--\ref{XThm_forced} follow directly from \Thm s~\ref{Thm_vars}--\ref{Thm_marginals}
by reordering the individual statements according to the phases they appear in,
apart from part~4 of \Thm~\ref{XThm_cond}, whose proof is given in Appendix~\ref{Sec_Marg01}.
After stating some preliminaries in Appendix~\ref{Apx_pre},
we will prove \Thm~\ref{Thm_vars} in Appendix~\ref{Sec_vars}.
Then, in Appendix~\ref{Sec_shattering} we will prove \Thm~\ref{Thm_shape}.
Further, Appendix~\ref{Sec_marg} contains the proof of \Thm~\ref{Thm_marginals}.
Finally, in Appendix~\ref{Sec_BPproofs} we prove \Thm~\ref{XThm_NoBP}.

\section{Preliminaries}\label{Apx_pre}

Recall that $V_t=\cbc{x_{t+1},\ldots,x_n}$.
In addition, we let $L_t=\cbc{x_{t+1},\bar x_{t+1},\ldots,x_n,\bar x_n}$.
For a literal $l$ let $\abs l$ be the underlying variable.
For a formula $\Phi$ on $V=\cbc{x_1,\ldots,x_n}$, an assignment $\sigma\in\cbc{0,1}^V$,
and $1\leq t\leq n$ we let
	$\Phi_{t,\sigma}$ denote the formula obtained by substituting $\sigma(x_s)$ for $x_s$ for all
	$1\leq s\leq t$ and simplifying.

We need the following \emph{Chernoff bound}
on the tails of a binomially distributed random variable $X$ with mean $\lambda$%
%(see~\cite[pages 26--28]{JLR})
: for any $t>0$
\begin{eqnarray}\label{eqChernoff}
\pr(X\geq\lambda+t)\leq\exp\bc{-t\cdot\varphi(t/\lambda)}
   &\textrm{ and }&\pr(X\leq\lambda-t)\leq\exp\bc{-t\cdot\varphi(-t/\lambda)}.
\end{eqnarray}
where
	\begin{equation}\label{eqvarphi}
	\varphi(x)=(1+x)\ln(1+x)-x.
	\end{equation}

We will need the following consequence of \Thm~\ref{Thm_count}
	(cf.\ \Cor~\ref{XCor_count} in the main part of the paper).

\begin{corollary}\label{Cor_count}
Let $1\leq t\leq n$.
Let $(\Phi_t,\sigma_t)$ be a pair chosen from the experiment {\bf U1}--{\bf U4}.
Then \whp\
	\begin{equation}\label{eqCor_count}
	\frac1n\ln\abs{\cS(\Phi_t)}\geq(1-t/n)\ln2+r\ln(1-2^{-k})-\frac{kr}{4^k}.
	\end{equation}
\end{corollary}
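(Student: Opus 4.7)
The plan is to deduce the bound from Theorem~\ref{Thm_count} (which supplies the analogous lower bound on $|\cS(\PHI)|$) by a one-line double-counting identity followed by Markov's inequality applied to $1/|\cS(\PHI_t)|$. I work with the experiment {\bf U1}--{\bf U4} directly; this is equivalent to {\bf D1}--{\bf D4} by Fact~\ref{XFact_UD}. For a partial assignment $\eta\colon\{x_1,\ldots,x_t\}\to\{0,1\}$, set
$g(\eta)=|\{\tau\in\cS(\PHI):\tau|_{\{x_1,\ldots,x_t\}}=\eta\}|,$
so that $|\cS(\PHI_t)|=g(\SIGMA|_{\{x_1,\ldots,x_t\}})$ and $\sum_\eta g(\eta)=|\cS(\PHI)|$. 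Because $\SIGMA$ is uniform on $\cS(\PHI)$, conditional on $\PHI$ we have $\pr[\SIGMA|_{\{x_1,\ldots,x_t\}}=\eta\mid\PHI]=g(\eta)/|\cS(\PHI)|$, whence
\begin{equation*}
\Erw\!\left[\left.|\cS(\PHI_t)|^{-1}\,\right|\,\PHI\right]
=\sum_{\eta\,:\,g(\eta)>0}\frac{g(\eta)}{|\cS(\PHI)|}\cdot\frac{1}{g(\eta)}
\leq\frac{2^t}{|\cS(\PHI)|},
\end{equation*}
the final inequality being the double-counting step: at most $2^t$ distinct $\eta$ have $g(\eta)>0$.

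Next, set $B=\exp\!\left[\theta n\ln 2+nr\ln(1-2^{-k})-krn/4^k\right]$, i.e.\ the exact target threshold of the corollary. Markov's inequality then yields $\pr[|\cS(\PHI_t)|<B\mid\PHI]\leq 2^tB/|\cS(\PHI)|$. By Theorem~\ref{Thm_count}, \whp\ we have $|\cS(\PHI)|\geq\exp[n\ln 2+nr\ln(1-2^{-k})-0.99\,krn/4^k]$; on that event a direct substitution (using $t+\theta n=n$ and $kr/4^k=\rho/2^k$) shows $2^tB/|\cS(\PHI)|\leq\exp[-0.01\,\rho n/2^k]=o(1)$, since $\rho\geq\rho_0$ is bounded below by a constant and $k$ is constant. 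A union bound with the failure event of Theorem~\ref{Thm_count} then delivers the unconditional \whp\ statement.

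The main obstacle one might worry about is concentration of $|\cS(\PHI_t)|$ around its conditional mean: the naive expectation bound $\Erw[|\cS(\PHI_t)|\mid\PHI]\geq|\cS(\PHI)|/2^t$ (which follows from Cauchy--Schwarz) combined with the trivial ceiling $|\cS(\PHI_t)|\leq 2^{\theta n}$ is exponentially too weak to yield a \whp\ lower bound via Paley--Zygmund-type reasoning. The Markov-on-reciprocal argument sidesteps concentration entirely, because $\Erw[|\cS(\PHI_t)|^{-1}\mid\PHI]$ admits the clean upper bound $2^t/|\cS(\PHI)|$ regardless of how mass is distributed among the clusters $g(\eta)$. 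The only bookkeeping is to verify that the $0.01$ slack between the constants $0.99$ in Theorem~\ref{Thm_count} and $1$ in the corollary is exactly what is needed to absorb the factor $2^t=2^{(1-\theta)n}$ lost in the double-counting step --- which it is, in the entire parameter range.
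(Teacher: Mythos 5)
Your proposal is correct and follows essentially the same route as the paper's proof: condition on the event that $|\cS(\PHI)|$ is large (supplied by Theorem~\ref{Thm_count}), note that $|\cS(\PHI_t)|=g(\SIGMA|_t)$ where $\SIGMA$ is uniform on $\cS(\PHI)$, and bound the probability of landing in a cell $\eta$ with small $g(\eta)$ using $\pr[\eta]=g(\eta)/|\cS(\PHI)|$ together with the fact that only $2^t$ cells exist. The paper sums directly over the bad cells while you phrase it as Markov applied to $1/g(\SIGMA|_t)$, but the arithmetic is identical, including the observation that the $0.01$-slack in Theorem~\ref{Thm_count} absorbs the $2^t$ factor.
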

\begin{proof}
Let $\Phi$ be a formula such that 
	$\frac1n\ln\abs{\cS(\Phi)}\geq\ln2+r\ln(1-2^{-k})-kr/4^k$.
By \Thm~\ref{Thm_count} the random formula $\PHI$ has this property \whp\
Thus, it suffices to show that for a random $\sigma\in\cS(\Phi)$ the bound~(\ref{eqCor_count}) holds \whp\
To this end, let $\cI=\cbc{0,1}^t$.
Moreover, for each $\sigma\in\cbc{0,1}^n$ let
	$\sigma|_t$ be the vector $(\sigma(x_1),\ldots,\sigma(x_t))\in\cI$.
For each $\sigma_*\in\cI$ let $Z(\sigma_*)$ be the number of assignments $\sigma\in\cS(\Phi)$ such
that $\sigma|_t=\sigma_*$.
If $\sigma\in\cS(\Phi)$ is chosen uniformly at random, then for any $\sigma_*\in\cI$ we have
	\begin{eqnarray*}
	\pr\brk{\sigma|_t=\sigma_*}&=&Z(\sigma_*)/Z,\mbox{ where }Z=\sum_{\tau\in\cI}Z(\tau)=\abs{\cS(\Phi)}.
	\end{eqnarray*}
Let $\xi>0$ be a sufficiently small number and let
	$$q=\pr\brk{Z(\sigma|_t)<\exp(-t\ln2-\xi n)\cdot Z},$$
where $\sigma\in\cS(\Phi)$ is chosen uniformly at random.
Then
	\begin{eqnarray*}
	q&=&\sum_{\sigma_*\in\cI:Z(\sigma_*)\leq\frac{Z}{\exp(\xi n+t\ln2)}}Z(\sigma_*)/Z
		\leq\frac{2^t}Z\cdot\frac{Z}{\exp(\xi n+t\ln2)}\leq\exp(-\xi n),
	\end{eqnarray*}
whence the assertion follows.
\qed\end{proof}

In \Sec~\ref{Sec_planted} we introduced the experiment {\bf P1}--{\bf P4}, which led to the planed model $\cP_k(n,m)$.
In addition, we need the following variant of the planted model.

\begin{description}
\item[P1'.] Choose an assignment $\SIGMA'\in\cbc{0,1}^V$ uniformly at random.
\item[P2'.] Choose a formula $\PHI'$ 
			by including each of the $(2^k-1)\bink{n}k$ possible clauses that are satisfied under $\SIGMA'$
			with probability $p=m/((2^k-1)\bink{n}k)$ independently.
\item[P3'.] Substitute $\SIGMA'(x_i)$ for $x_i$ for $1\leq i\leq t$ and simplify
			to obtain a formula $\PHI'_t$.
\item[P4'.] The result is the pair $(\PHI'_t,\SIGMA'_{t})$, where 
			$\SIGMA'_{t}:V_t\ra\cbc{0,1},\ x\mapsto\SIGMA'(x).$
\end{description}

Steps {\bf P1'}--{\bf P2'} of this experiment induce a probability distribution $\cP_k'(n,m)$
on formula/assignment pairs.
The following corollary establishes a connection between this distribution and the distribution $\cU_k(n,m)$.

\begin{corollary}[\cite{Barriers}]\label{Cor_Pnm'}
Suppose that $k\geq4$ and $0<r<2^k\ln2-k$.
Let  $\EE$ be any property of formula/assignment pairs.
If
	$\pr_{\mathcal{P}_k'\bc{n,m}}\brk{\EE}
		\geq1-\exp(-krn/4^k)$
then $\pr_{\mathcal{U}_k\bc{n,m}}\brk{\EE}=1-o(1).$
\end{corollary}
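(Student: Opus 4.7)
The plan is to reduce to Theorem~\ref{Cor_SATExchange} via a standard de-Poissonization bridge from $\cP_k'(n,m)$ to $\cP_k(n,m)$. The two planted distributions differ only in how the clauses are drawn given the planted assignment: $\cP_k$ selects exactly $m$ of the $N=(2^k-1)\bink nk$ satisfied clauses uniformly at random, whereas $\cP_k'$ includes each such clause independently with probability $p=m/N$. The key observation is that, conditional on its total clause count $M$ equalling $m$, the distribution $\cP_k'(n,m)$ coincides with $\cP_k(n,m)$, since both then reduce to picking a uniform $m$-subset of the satisfied clauses.

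The main calculation is therefore to lower-bound $\pr_{\cP_k'(n,m)}\brk{M=m}$. Under $\cP_k'$ we have $M\sim\Bin(N,p)$ with $\Erw M=m$ and $\Var M=(1+o(1))m$, so Stirling's formula (or the local central limit theorem) yields $\pr_{\cP_k'(n,m)}\brk{M=m}=\Theta(1/\sqrt n)$. For any event $\EE\subset\Lambda_k(n,m)$, Bayes' rule then gives
\[
\pr_{\cP_k(n,m)}\brk{\bar\EE}=\pr_{\cP_k'(n,m)}\brk{\bar\EE\mid M=m}
\leq\frac{\pr_{\cP_k'(n,m)}\brk{\bar\EE}}{\pr_{\cP_k'(n,m)}\brk{M=m}}
\leq O(\sqrt n)\cdot\exp(-krn/4^k).
\]
Since $krn/4^k=\rho n/2^k$ grows linearly in $n$, the right-hand side has the form $\exp(-(1-o(1))\rho n/2^k)$, and hence is at most $\exp(-\rho' n/2^k)$ for some $\rho'=\rho-o(1)$ still lying in the admissible interval $(0,k\ln 2-k^2/2^k)$ of Theorem~\ref{Cor_SATExchange}.

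Invoking that theorem with $\rho'$ then yields $\pr_{\cU_k(n,m)}\brk{\EE}=1-o(1)$, as required. The only real subtlety in this proof is absorbing the polynomial-in-$n$ loss from de-Poissonization into the exponential hypothesis of Theorem~\ref{Cor_SATExchange}; because that hypothesis is far stronger than any polynomial factor in $n$, this causes no difficulty, and the rest of the argument is routine bookkeeping.
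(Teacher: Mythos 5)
The paper does not prove Corollary~\ref{Cor_Pnm'} at all; it imports both Theorem~\ref{Cor_SATExchange} and this corollary directly from~\cite{Barriers}, so there is no proof in the paper to compare against. Your strategy --- bridging $\cP_k'(n,m)$ to $\cP_k(n,m)$ by conditioning the independent-inclusion model on the total clause count $M=m$ and then invoking Theorem~\ref{Cor_SATExchange} --- is the natural de-Poissonization route, and the local CLT estimate $\pr_{\cP_k'(n,m)}[M=m]=\Theta(1/\sqrt n)$ is correct.

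The gap is in how you dispose of the resulting $\sqrt n$ factor. After conditioning you obtain $\pr_{\cP_k(n,m)}[\bar\EE]\leq O(\sqrt n)\exp(-krn/4^k)$, but Theorem~\ref{Cor_SATExchange} demands $\pr_{\cP_k(n,m)}[\bar\EE]\leq\exp(-\rho n/2^k)=\exp(-krn/4^k)$ \emph{exactly} at the density in question. You propose to absorb the polynomial loss by ``invoking the theorem with $\rho'=\rho-o(1)$,'' but $\rho$ is not a free parameter in that theorem: it is fixed by $m,n,k$ via $\rho=kr/2^k$. Passing to $\rho'$ would change $m$ to a smaller $m'$, and then the hypothesis you would need concerns $\cP_k(n,m')$, a distribution about which you have no information. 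So the step ``this causes no difficulty, and the rest is routine bookkeeping'' is not justified as written. To make your proof go through you would need to either (i) show that the theorem from~\cite{Barriers} actually has polynomial slack in its hypothesis (which is plausible, since its own proof presumably relies on \Thm~\ref{Thm_count}, which itself has exponential room to spare, but this must be checked against~\cite{Barriers} and cannot be asserted from the statement given here), or (ii) argue directly from the first-moment/counting argument underlying both transfer statements, obtaining the $\cP_k'$ version with the same exponent rather than deriving it from the $\cP_k$ version.
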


We will need the following elementary observation about the distribution $\cP_k'\bc{n,m}$.

\begin{lemma}\label{Lemma_SuppBin}
Let $(\Phi,\sigma)$ be a pair chosen from the distribution $\cP_k'\bc{n,m}$.
\begin{enumerate}
\item For each literal $l$ that is true under $\sigma$ the
		number of clauses supported by $l$ is binomially distributed 
			$\Bin(\frac{k}n\cdot\bink nk,m/((2^k-1)\bink{n}k))$.
\item For any integer $D$ the number of literals $l$ that support fewer than $D$ clauses
		is binomially distributed with mean
				\begin{equation}\label{eqSuppBin}
				n\cdot\pr\brk{\Bin\bc{\frac{k}n\cdot\bink nk,\frac m{(2^k-1)\bink{n}k}}<D}.
				\end{equation}
\end{enumerate}
\end{lemma}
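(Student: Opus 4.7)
The plan is to exploit the combinatorial independence built into the \textbf{P1'}--\textbf{P2'} model. Both parts reduce to (i) counting the number of candidate clauses that a given true literal $l$ could support, and (ii) observing that these candidate sets are disjoint across distinct true literals.

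For part~1, I would fix the assignment $\SIGMA'=\sigma$ produced in step \textbf{P1'} and consider any literal $l$ that is true under $\sigma$ (there are exactly $n$ such literals). By definition, a clause $a$ is supported by $l$ iff $l$ occurs in $a$ and every other literal of $a$ evaluates to \false\ under $\sigma$. This forces the remaining $k-1$ variables of $a$ to be chosen from the $n-1$ variables distinct from $\abs l$, each with the sign opposite to $\sigma$. Hence the number of \emph{candidate} clauses that $l$ could support is exactly $\bink{n-1}{k-1}=\frac{k}{n}\bink{n}{k}$. Each such candidate is satisfied under $\sigma$, so by \textbf{P2'} it is included independently with probability $p=m/((2^k-1)\bink nk)$. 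Summing these independent indicators gives the announced $\Bin\bc{\frac{k}{n}\bink nk,p}$ distribution.

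For part~2, the key structural observation is that if $l_1\neq l_2$ are two true literals under $\sigma$, then the sets of candidate clauses supported by $l_1$ and by $l_2$ are \emph{disjoint}: a clause supported by $l_1$ has $l_1$ as its unique true literal, so it cannot contain $l_2$ (which would then be a second true literal, a contradiction). Because \textbf{P2'} includes different clauses by independent coin flips, the counts $S_l$ of supported clauses across the $n$ true literals are functions of disjoint blocks of independent Bernoulli variables, hence are mutually independent. By part~1 each $S_l$ has the same distribution, so each indicator $\mathbf{1}\cbc{S_l<D}$ is an independent Bernoulli with success probability $\pr\brk{\Bin(\frac{k}{n}\bink nk,p)<D}$. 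Summing over the $n$ true literals yields a binomial random variable, and linearity of expectation gives the claimed mean in~(\ref{eqSuppBin}).

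I do not foresee a serious obstacle: the statement is essentially a verification that the two natural independence properties built into the planted-with-independent-clauses model \textbf{P1'}--\textbf{P2'} carry through. The only point requiring a moment of thought is the disjointness of candidate supports, which is immediate from the meaning of ``$l$ supports $a$''; by contrast, in the experiment \textbf{P1}--\textbf{P2} (choosing exactly $m$ satisfied clauses) independence would fail, which is precisely why we work with the slightly altered model here and then transfer conclusions via \Cor~\ref{Cor_Pnm'}.
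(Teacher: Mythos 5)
Your proposal is correct and follows essentially the same route as the paper: count the $\bink{n-1}{k-1}=\frac kn\bink nk$ candidate clauses a true literal can support, note that {\bf P2'} includes each independently with probability $p$, and then observe that the candidate sets for distinct true literals are disjoint, which yields independence of the per-literal counts and hence a binomial total. (Incidentally, your count $\bink{n-1}{k-1}$ is the correct one; the paper's intermediate expression $k\bink{n-1}{k-1}$ carries a spurious factor of $k$, though its final reduction $\frac kn\bink nk$ is right.)
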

\begin{proof}
Without loss of generality we may condition on $\sigma$ assigning the value true to all variables.
For any variable $x$ let $\cS_x$ be the set of all possible clauses in which $x$ is the only positive literal.
Then $\abs{\cS_x}=k\bink{n-1}{k-1}=\frac{k}n\cdot\bink{n}k$.
(First choose one of the $k$ slots where to place $x$, then choose the $k-1$ other variables occurring in the clause;
the signs are prescribed by $x$ being the unique positive literal.)
Moreover, let $S_x$ be the number of clauses from $\cS_x$ that actually appear in the random formula $F$.
As each of the clauses in $\cS_x$ is included in $F$ with probability $p=m/((2^k-1)\bink{n}k)$ independently,
$S_x$ has a binomial distribution $\Bin(\frac{k}n\cdot\bink nk,p)$.
This establishes 1.

Since for any two variables $x,y$ we have $\cS_x\cap\cS_y=\emptyset$, 
the random variables $S_x$ are mutually independent for all variables $x$.
Therefore, the number
	$S=\sum_x 1_{\cbc{S_x<D}}$
of variables supporting fewer than $D$ clauses in $F$ is binomially distributed as well.
\qed\end{proof}

There is a natural way to associate a bipartite graph with a $k$-CNF $\Phi$, known as the {\em factor graph}.
Its vertices are the variables and the clauses of $\Phi$, and each clause is adjacent to all the variables it contains.
For a variable $x$ we let $N_{3}(x)$ be the subgraph of that
is spanned by all vertices at distance at most $3$ from $x$.
A variable $x$ is {\em tame} if $N_{3}(x)$  is acyclic and contains no more than $\ln(n)$ variables.
The following is a well-known fact about random $k$-CNFs.

\begin{proposition}\label{Prop_tame}
Suppose that $k\geq3$ and $0<r\leq2^k\ln2$.
\Whp\ all but $o(n)$ variables are tame in $\PHI$.
\end{proposition}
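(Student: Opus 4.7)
The plan is a standard local-structure argument on the factor graph of $\PHI$: I would show that for a fixed variable $x$ the probability that $x$ fails to be tame is $o(1)$, with the expected number of non-tame variables being $o(n)$, and then conclude via Markov's inequality. Since the two conditions defining tameness (the acyclicity of $N_{3}(x)$ and the bound $|N_{3}(x)|\le \ln n$) are qualitatively different, I would handle them separately.

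For the size bound, I would explore $N_{3}(x)$ outward from $x$ by breadth-first search. Since in $\PHI_k(n,m)$ each clause independently selects a uniformly random $k$-tuple of literals, the number of clauses incident to any fixed variable is stochastically dominated by $\Bin(m,k/n)$, whose mean is at most $kr\le k\cdot 2^k\ln 2$, a constant independent of $n$. Iterating for three generations, $|N_{3}(x)|$ is stochastically dominated by the total population of a three-level branching process with $\Bin(m,k/n)$ offspring at the variable-to-clause steps and deterministic $(k-1)$-fold branching at the clause-to-variable step; in particular its expectation is $O(k^2\cdot 2^k)$, independent of $n$. Applying the Chernoff bound~(\ref{eqChernoff}) to the constantly many (essentially) independent binomials appearing in this sum then yields $\pr\brk{|N_{3}(x)|>\ln n}=n^{-\omega(1)}$, so the expected number of variables whose neighborhood is too large is $o(1)$.

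For acyclicity, I would apply the first-moment method to short cycles. Any cycle in the bipartite factor graph has even length $2\ell\ge 4$, and only cycles with $\ell\le 6$ can possibly lie inside $N_{3}(x)$. For fixed $\ell$, the expected number of cycles of length $2\ell$ through $x$ is bounded by $O(n^{\ell-1})$ choices for the other $\ell-1$ variables times the probability that the $\ell$ required clauses are simultaneously realized in $\PHI$; since the probability that a particular clause of $\PHI$ contains a specified pair of variables is $O(k^2/n^2)$, this expectation is at most $O\bc{n^{\ell-1}\cdot m^\ell\cdot (k^2/n^2)^\ell}=O\bc{(k^2 r)^\ell/n}=O(1/n)$ for fixed $k,r$ and $\ell\le 6$. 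Hence the expected number of variables $x$ with cyclic $N_{3}(x)$ is $O(1)$. Combining the two estimates, the expected number of non-tame variables is $o(n)$, and Markov's inequality finishes the proof. The only mild technicality is that the variable degrees uncovered during the BFS are not strictly independent (clauses are shared between variables), but since the explored neighborhood has constant expected size these dependencies perturb the Chernoff estimate by only a $1+o(1)$ factor and can be absorbed into the constants.
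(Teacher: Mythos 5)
The paper does not prove this proposition; it cites it as a ``well-known fact,'' so there is no in-text argument to compare against. Your first-moment strategy (bound $\pr[x\text{ not tame}]$, sum, Markov) is the standard route and the overall conclusion is correct, but one step is not quite right as written.

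The size bound is fine, and in fact simpler than you describe: the variables in $N_3(x)$ are exactly those at distance $0$ or $2$, and their number is at most $1+(k-1)\deg(x)$ since every distance-$1$ clause contributes at most $k-1$ new variables; a single Chernoff (or hypergeometric tail) bound on $\deg(x)$, which has mean $kr=O(k2^k)$, already gives $\pr[\,|N_3(x)\cap V|>\ln n\,]=n^{-\omega(1)}$. No genuine three-level branching is needed, although invoking one does no harm.

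The gap is in the acyclicity part: you bound the expected number of \emph{cycles through $x$}, but a cycle lying inside $N_3(x)$ need not pass through $x$. For instance, if two clauses $a_1,a_2$ both contain $x$ and also both contain two further common variables $y,z$, then $a_1\,y\,a_2\,z\,a_1$ is a $4$-cycle inside $N_3(x)$ that avoids $x$, and your count of cycles through $x$ does not see it. The fix is routine: if $N_3(x)$ has a cycle, a BFS-tree argument gives a cycle $C$ of length at most $6$, and a (possibly empty) shortest path $P$ of length at most $3$ from $x$ to $C$, so that $C\cup P$ is a connected unicyclic subgraph of bounded size containing $x$. For a fixed shape with $a$ variables (one of which is $x$) and $b$ clauses, the number of ways to realize it is $O(n^{a-1}m^b)$ and the probability of the $a+b$ required variable-clause incidences is $O((k/n)^{a+b})$, so the expected count per $x$ is $O(m^b k^{a+b}/n^{b+1})=O(r^b k^{a+b}/n)=O(1/n)$, and summing over $x$ gives $O(1)$. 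With this repair the argument closes: the expected number of non-tame variables is $O(1)+n\cdot n^{-\omega(1)}=O(1)=o(n)$, and Markov's inequality finishes the proof. Your closing remark about dependencies among degrees is correct in spirit but also hand-waved; it is cleaner to observe that the variable-count bound depends only on $\deg(x)$, so no joint concentration is required.
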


Finally, the following lemma expresses an elementary `expansion property' of the random formula $\PHI$.

\begin{lemma}\label{Lemma_exp_small}
There is a number $\chi=\chi(k)>0$ such that for all $0<r\leq 2^k$
the random formula $\PHI$ has the following property \whp
	\begin{equation}\label{eq_exp_small}
	\parbox{10cm}{
	There is no set $Q$ of $1\leq |Q|\leq\chi n$ variables
	such that the number of clauses containing at least two variables from $Q$
	is at least $2|Q|$.}
	\end{equation}
\end{lemma}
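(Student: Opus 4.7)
The plan is to establish~(\ref{eq_exp_small}) by a routine union-bound/first-moment argument over the potentially bad sets $Q$. First I fix an integer $q$ with $1\leq q\leq \chi n$ and a set $Q\subset V$ with $|Q|=q$. For a uniformly random $k$-clause, the probability that at least two of its $k$ positions are occupied by literals whose underlying variable lies in $Q$ is at most $\bink{k}{2}(q/n)^2\leq k^2q^2/(2n^2)$, by a union bound over the $\bink{k}{2}$ pairs of positions. Since the $m$ clauses of $\PHI$ are drawn independently, the number $X_Q$ of clauses of $\PHI$ that contain at least two variables from $Q$ is stochastically dominated by $\Bin(m,p_q)$ with $p_q\leq k^2q^2/(2n^2)$, and hence
$$\pr\brk{X_Q\geq 2q}\leq \bink{m}{2q}p_q^{2q}.$$

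Next I take a union bound over the $\bink{n}{q}$ choices of $Q$ and then over $q$. Using the standard estimates $\bink{n}{q}\leq(\eul n/q)^q$ and $\bink{m}{2q}\leq(\eul m/(2q))^{2q}$, together with $m\leq rn\leq 2^k n$, the contribution from all sets of size $q$ is at most
$$\bink{n}{q}\bink{m}{2q}p_q^{2q}\leq\bc{\frac{\eul n}{q}}^q\bc{\frac{\eul m}{2q}}^{2q}\bc{\frac{k^2q^2}{2n^2}}^{2q}\leq \bc{\frac{Ck^4r^2q}{n}}^q$$
for some universal constant $C>0$.

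Now I choose $\chi=\chi(k)$ small enough that $Ck^4r^2\chi\leq 1/2$; since $r\leq 2^k$ this is achieved e.g.\ by $\chi\leq 1/(2Ck^4 4^k)$. With this choice, every term is at most $(q/(2n))^q$ for $1\leq q\leq\chi n$. Splitting the sum at $q_0=\uppergauss{\sqrt n}$: for $1\leq q\leq q_0$ one has $(q/(2n))^q\leq n^{-q/2}$, so the partial sum is $O(n^{-1/2})$; for $q_0<q\leq\chi n$ each term is at most $2^{-q}\leq 2^{-\sqrt n}$, and summing over at most $n$ such values still yields $o(1)$. Therefore the total union bound is $o(1)$, which establishes~(\ref{eq_exp_small}) \whp\ The step requiring the most care is the choice of $\chi$: it must be of order $4^{-k}/k^{4}$ in order to absorb simultaneously the two binomial coefficients and the factor $r^{2}\leq 4^{k}$, but since only the \emph{existence} of such a constant is asserted this poses no real obstacle.
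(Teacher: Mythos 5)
Your proposal is correct and follows essentially the same first-moment/union-bound strategy as the paper; the only substantive difference is that you bound the binomial tail by the elementary "choose $2q$ clauses" estimate $\pr[X_Q\geq 2q]\leq\bink{m}{2q}p_q^{2q}$, whereas the paper invokes a Chernoff bound — both are adequate here. One small slip: with $Ck^4r^2\chi\leq 1/2$ the term is bounded by $(Ck^4r^2q/n)^q$, not by $(q/(2n))^q$ (the latter would need $Ck^4r^2\leq 1/2$); but the split at $q_0=\lceil\sqrt n\rceil$ still yields the partial sums $O(n^{-1/2})$ and $n\cdot 2^{-\sqrt n}$, so the final $o(1)$ conclusion is unaffected.
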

\begin{proof}
We use a first moment argument.
Let $1\leq q\leq\chi n$ and let $Q_0=\{x_1,\ldots,x_q\}$ be a fixed set of size $q$.
For any set $Q$ we let $Y(Q)$ be the number of clauses containing
at least two variables from $Q$.
Moreover, let $X_q$ be the number of sets $Q$ of size $q$ such that $Y(Q)\geq2q$.
Since the distribution $\form_k(n,m)$ is symmetric with respect to permutations of the variables, we have
	\begin{equation}		\label{eq_exp_small0}
	\Erw X_q\leq\bink{n}q\cdot\pr\brk{Y(Q_0)\geq2q}
		\leq\exp\brk{q(1+\ln(n/q))}\cdot\pr\brk{Y(Q_0)\geq2q}.
	\end{equation}
Furthermore, the probability that a random $k$-clause contains two variables from $Q_0$
is at most $\bink k2(q/n)^2$
(because for each of the $\bink k2$ pairs of `slots' in the clauses the probability that both
of them are occupied by variables from $Q_0$ is at most $(q/n)^2$).
As $F_k(n,m)$ consists of $m$ independent $k$-clauses, $Y(Q_0)$ is stochastically dominated
by a binomial random variable $\Bin(m,\bink{k}2(q/n)^2)$.
Consequently, assuming that $q/n\leq\chi$ is sufficiently small, we get
	\begin{eqnarray}
	\pr\brk{Y(Q_0)\geq2q}&\leq&
		\pr\brk{\Bin\bc{m,\bink{k}2q}\geq2q}\nonumber\\
		&\leq&\exp\brk{-1.9q\cdot\brk{\ln\bcfr{2q}{\bink{k}2(q/n)^2m}-1}}\qquad\mbox{[by the Chernoff bound~(\ref{eqChernoff})]}
			\nonumber\\
		&\leq&\exp\brk{-1.9q\cdot\ln\bc{\frac{4}{\eul k^2r}\cdot\frac{n}q}}
			\leq\exp\brk{-1.9q\cdot\ln\bc{\frac{4}{\eul k^22^k}\cdot\frac{n}q}}.
		\label{eq_exp_small1}
	\end{eqnarray}
Choosing $\chi=\chi(k)$ sufficiently small, we can ensure that
$(q/n)^{1/4}\leq\chi^{1/4}\leq 4/(\eul k^22^k).$
Plugging this bound into~(\ref{eq_exp_small1}), we get
	\begin{eqnarray}		\label{eq_exp_small2}
	\pr\brk{Y(Q_0)\geq2q}&\leq&
		\exp\brk{-1.1q\cdot\ln\bc{n/q}}.
	\end{eqnarray}
Combining~(\ref{eq_exp_small0}) and~(\ref{eq_exp_small2}), we get
	$\Erw X_q\leq	\exp\brk{-0.1q\ln\bc{n/q}}.$
In effect, $\Erw\sum_{1\leq q\leq\chi n}X_q=O(n^{-0.1})$.
Hence, Markov's inequality implies that \whp\
$\sum_{1\leq q\leq\chi n}X_q=0$,
in which case~(\ref{eq_exp_small}) holds.
\qed\end{proof}

\section{Proof of \Thm~\ref{Thm_vars}}\label{Sec_vars}

\subsection{Loose variables}

Let $\sigma$ be a satisfying assignment of a $k$-CNF $\Phi$.
Remember that a literal $l$ {\em supports} a clause $C$ of $\Phi$ if $l$ is the only literal in $C$ that is true under $\sigma$.
Moreover, we say that a literal $l$ is {\em $1$-loose} if it is true under $\sigma$ and supports no clause.
In addition, $l$ is {\em $2$-loose} if $l$ is true under $\sigma$ and each clause that $l$ supports contains a $1$-loose literal
	from $L_t$.
Thus, any $1$-loose literal is $2$-loose as well.
The key step of the proof is to establish the following.

\begin{proposition}\label{Prop_2loose}
Suppose that $\theta\geq3\exp(\rho)(\ln\rho+10)/k$ and $r\leq 2^k\ln2-k$.
Let $(\Phi,\sigma)$ be a random pair chosen from the distribution $\cU_k\bc{n,m}$.
Then there are at least $0.999\theta n$ $2$-loose literals in $L_t$ \whp
\end{proposition}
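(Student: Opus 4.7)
The plan is to pass to the planted model via Corollary~\ref{Cor_Pnm'}, which reduces the task to showing that, in $\cP_k'(n,m)$, the number of non-$2$-loose true literals in $L_t$ exceeds $0.001\theta n$ with probability at most $\exp(-krn/4^k)$. In this model Lemma~\ref{Lemma_SuppBin} supplies the crucial independence structure: for each literal $l$ true under $\SIGMA'$, the count $S_l$ of supporting clauses is $\Bin(\binom{n-1}{k-1},p)$ with mean close to $\rho$, and the variables $\{S_l\}_l$ are mutually independent across $l$ because the sets $\cC_{\mathrm{supp}}(l)$ of potential supporting clauses are pairwise disjoint. In particular $\pr[l\text{ is $1$-loose}]=(1-p)^{\binom{n-1}{k-1}}=(1+o(1))\exp(-\rho)$, and the random set $U=\{x\in V_t:l_x\text{ is $1$-loose}\}$ has $|U|$ binomial with mean $\approx\theta n\exp(-\rho)$.

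Next I would bound the expected number of non-$2$-loose literals. A true $l\in L_t$ is non-$2$-loose iff some clause $C\in\Phi$ supporting $l$ contains no variable of $U$ in $C\setminus\{|l|\}$. Fix a potential $C\in\cC_{\mathrm{supp}}(l)$: the $1$-loose status of each $y\in C\setminus\{|l|\}$ depends only on clauses in $\cC_{\mathrm{supp}}(l_y)$, disjoint from $\cC_{\mathrm{supp}}(l)$, hence is independent of $C\in\Phi$ itself. A direct computation gives $\pr[C\text{ has no }U\text{-variable}]\le(1-\theta\exp(-\rho))^{k-1}$, which by the hypothesis $k\theta\ge3\exp(\rho)(\ln\rho+10)$ is at most $\rho^{-3}\exp(-25)$. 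Hence the expected number of non-$2$-loose true literals in $L_t$ is at most $\theta n\cdot\rho^{-2}\exp(-25)$, far below $0.001\theta n$.

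For the required exponential concentration I would use a two-stage argument. First, Chernoff applied to the binomial $|U|$ gives $|U|\ge\tfrac12\theta n\exp(-\rho)$ except with probability $\exp(-\Omega(\theta n\exp(-\rho)))$, which the hypothesis on $k\theta$ comfortably pushes below $\exp(-krn/4^k)$. Second, conditional on any such $U$, for each $x\in V_t\setminus U$ the number $M_x$ of bad supporting clauses of $l_x$ is a $\Bin(\binom{n-1-|U|}{k-1},p)$ variable conditioned on $S_{l_x}\ge1$, and the $M_x$ are conditionally independent across $x$ (as the supporting sets are disjoint). Because the conditioning $S_{l_x}\ge1$ inflates the MGF of $M_x$ by at most the factor $(1-\exp(-\rho))^{-1}\le2$, a standard Chernoff bound yields $\pr[\sum_xM_x\ge0.001\theta n\mid U]\le\exp(-\Omega(\theta n))$ with a $k$-independent constant. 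Since $\sum_xM_x$ upper bounds the number of non-$2$-loose true literals, this closes the planted-model estimate, and Corollary~\ref{Cor_Pnm'} transfers the conclusion to $\cU_k(n,m)$.

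The main obstacle is extracting a concentration exponent comfortably above $krn/4^k=\rho n/2^k$ despite the intricate dependencies among the clause-inclusion events. A straightforward Azuma bound on the $m$ independent clause-coordinates of $\Phi$ is far too weak, because toggling a single clause can flip the $1$-loose status of its supporting literal and thereby alter the $2$-loose status of each of the $\Theta(\rho 2^k)$ literals whose supported clauses contain that variable, giving an Azuma exponent of order $n/(\rho^3 8^k)$. The two-stage conditioning circumvents this: once $U$ is frozen, the residual randomness governing bad clauses decomposes cleanly into independent coordinates indexed by $V_t\setminus U$.
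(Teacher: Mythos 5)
Your proposal is correct and takes essentially the same two-stage route as the paper's proof (Lemma~\ref{Lemma_1loose} followed by Lemma~\ref{Lemma_2loose}, then the transfer via Corollary~\ref{Cor_Pnm'}): first show that the set $U$ of $1$-loose variables in $V_t$ is large with exponentially small failure probability in $\cP_k'(n,m)$, then condition and argue that supported clauses rarely avoid $U$, exploiting the disjointness of the sets $\cS_x$ for the required conditional independence. The paper's bookkeeping is slightly different (it conditions on an event $\cE$ bounding $X$ and the total support count $S$, then bounds one aggregate count $T$), whereas you condition on $U$ itself and decompose the count as $\sum_x M_x$ with the explicit MGF-inflation observation for the $S_{l_x}\geq1$ conditioning; your closing observation that a naive Azuma/bounded-differences bound over clause coordinates would be too weak, and that freezing $U$ is what restores independence, is precisely the structural reason the paper's two-stage proof also works.
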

To prove \Prop~\ref{Prop_2loose}, we start by estimating the number of $1$-loose variables.

\begin{lemma}\label{Lemma_1loose}
Suppose that $\theta\geq\exp(\rho)/k$ and $\rho\leq k\ln2$.
Let $(\Phi,\sigma)$ be a random pair chosen from the distribution $\cP_k'\bc{n,m}$.
%\begin{enumerate} \item
With probability at least $1-\exp(-k2^{2-k}n)$ the number of $1$-loose in $L_t$
	is at least 	$\theta n\cdot\exp(-\rho)/2$.
\end{lemma}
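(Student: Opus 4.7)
The plan is to express the number of $1$-loose literals in $L_t$ as a binomial random variable via Lemma~\ref{Lemma_SuppBin}, and then apply a Chernoff bound. Concretely, for each $x\in V_t$ let $\ell(x)$ denote the literal on $x$ that is true under $\sigma$. By part~1 of Lemma~\ref{Lemma_SuppBin}, in the $\cP_k'(n,m)$ model the number $S_{\ell(x)}$ of clauses supported by $\ell(x)$ is distributed as $\Bin(N,p)$ with $N=(k/n)\binom{n}{k}$ and $p=m/((2^k-1)\binom{n}{k})$. The $\theta n$ literals $\ell(x)$ with $x\in V_t$ support pairwise disjoint sets of potential clauses, so these $\theta n$ counts are mutually independent, and hence the number $X$ of $1$-loose literals in $L_t$ is distributed as $\Bin(\theta n,q)$ with $q=(1-p)^N$.

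The next step is to produce the estimate $q\geq(1-o_k(1))\exp(-\rho)$. An elementary computation gives $Np=kr/(2^k-1)=\rho/(1-2^{-k})$, while $p=O(n^{1-k})$ is negligible, so the bound $\ln(1-p)\geq -p-p^2$ yields
$$q\geq\exp\bigl(-\rho/(1-2^{-k})-o(1)\bigr)=\exp(-\rho)\cdot\exp\bigl(-O(\rho/2^k)-o(1)\bigr).$$
Since $\rho\leq k\ln 2$ by hypothesis, the correction factor is $1-O(k/2^k)$, so for $k$ at least an absolute constant one has $q\geq 0.99\exp(-\rho)$, and consequently $\Erw X=\theta n q\geq 0.99\,\theta n\exp(-\rho)$.

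The final step is a Chernoff bound. Choosing $t=\theta nq-\theta n\exp(-\rho)/2$ in~(\ref{eqChernoff}), the lower bound on $q$ forces $t/(\theta nq)\geq 0.49$, so $\varphi(-t/(\theta nq))$ is bounded below by an absolute positive constant, and the bound becomes
$$\pr\bigl[X<\theta n\exp(-\rho)/2\bigr]\leq\exp(-c\,\theta nq)\leq\exp(-c'\,n/k)$$
for absolute constants $c,c'>0$, where the second inequality uses the hypothesis $\theta\geq\exp(\rho)/k$. The only real subtlety, and thus the main thing to verify, is that this exponent dominates the target $k2^{2-k}n$: this amounts to asking $c'/k\geq 4k/2^k$, i.e.\ $2^k\gtrsim k^2/c'$, which holds for $k$ larger than an absolute constant (which we may assume, consistent with the $k\geq k_0$ convention of the paper). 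That delivers the claimed tail bound $\exp(-k2^{2-k}n)$ and finishes the proof.
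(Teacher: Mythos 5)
Your proof is correct and follows essentially the same route as the paper's: identify $X$ as a binomial random variable via Lemma~\ref{Lemma_SuppBin}, lower-bound its mean by roughly $\theta n\exp(-\rho)$, apply the Chernoff bound~(\ref{eqChernoff}), and then verify using $\theta\geq\exp(\rho)/k$ and $\rho\leq k\ln 2$ that the resulting exponent dominates $k2^{2-k}n$ once $k$ is large. The only difference is that you spell out the final comparison ($c'/k\geq k2^{2-k}$, i.e.\ $2^k\gtrsim k^2$) explicitly, where the paper leaves it implicit behind ``for large enough $k$''.
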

\begin{proof}
By \Lem~\ref{Lemma_SuppBin} the number $X$ of $1$-loose literals in $L_t$ has a binomial distribution with mean
	\begin{eqnarray*}
	\Erw X&=&\theta n\cdot\pr\brk{\Bin\bc{\frac kn\cdot\bink nk,\frac{m}{(2^k-1)\bink nk}}=0}\\
		&=&\theta n\cdot\bc{1-\frac{m}{(2^k-1)\bink nk}}^{\frac{k}n\cdot\bink{n}{k}}
		\sim \theta  n\cdot\exp\bc{-\frac{kr}{2^k-1}}=\theta n\exp(-\rho-\rho/(2^k-1)).
	\end{eqnarray*}
As $\theta\geq\exp(\rho)/k$ and $\rho\leq k\ln2$,
 the Chernoff bound~(\ref{eqChernoff}) shows that for large enough $k$
	\begin{eqnarray*}
	\pr\brk{X<\theta n\exp(-\rho)/2}&\leq&
		\exp\brk{-\frac{\theta n}{8\exp(\rho)}}\leq
			\exp(-k2^{2-k}n),
	\end{eqnarray*}
as desired.
\qed\end{proof}

\begin{lemma}\label{Lemma_2loose}
Suppose that $\theta\geq3\exp(\rho)(\ln\rho+10)/k$,
$\rho\geq\rho_0$ with $\rho_0$ as in \Lem~\ref{Lemma_1loose}, and that $k$ is sufficiently large.
Let $(\Phi,\sigma)$ be a random pair chosen from the distribution $\cP_k'(n,m)$.
Then with probability at least $1-\exp(-k2^{1-k}n)$ the number of $2$-loose literals in $L_t$ is at least $0.999\theta n$.
\end{lemma}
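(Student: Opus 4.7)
The plan is to use Lemma~\ref{Lemma_1loose} to pin down a large $1$-loose set $U$, then exploit the slot structure of the planted model to reduce the analysis of non-$2$-loose literals to a sum of \emph{independent} Bernoullis.

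First, Lemma~\ref{Lemma_1loose} gives that, outside an event of probability $\exp(-k2^{2-k}n)$, the set $U\subseteq L_t$ of $1$-loose literals has size at least $L := \theta n\exp(-\rho)/2$. Since every $1$-loose literal is trivially $2$-loose, it suffices to control the true literals $l\in L_t\setminus U$ that fail to be $2$-loose. Recall from Lemma~\ref{Lemma_SuppBin} that the clauses satisfied by $\sigma$ partition into disjoint families of $\binom{n-1}{k-1}$ \emph{support slots}, one family per true literal, with each slot included in $\PHI$ independently with probability $p$. Conditioning on $U$ imposes $S_l=0$ for $l\in U$ and $S_l\geq 1$ for true $l\notin U$; because these events involve disjoint slot families, the conditional law of $\PHI$ factors as an independent product over true literals. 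The key observation is that, once $U$ is fixed, whether $l\in L_t\setminus U$ is $2$-loose depends only on $l$'s own slots: a supported clause of $l$ is ``bad'' iff none of its $k-1$ other variables lies in $T:=\cbc{|l'|:l'\in U}$, and this is a deterministic function of the chosen slot and of $U$. Hence the indicators $\cbc{1[l\text{ not 2-loose}]}_{l\in L_t\setminus U}$ are independent conditional on $U$.

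Given $|U|\geq L$, a single slot of $l\notin U$ is bad with probability $\binom{n-1-|T|}{k-1}/\binom{n-1}{k-1}\leq\exp(-(k-1)L/n)$, which the hypothesis $k\theta\geq 3\exp(\rho)(\ln\rho+10)$ forces below $\rho^{-3/2}\exp(-15)$. A union bound over the truncated-binomial number of on-slots of $l$ --- whose conditional mean is $O(\rho)$ --- yields a per-literal bad probability $q\leq 3\rho^{-1/2}\exp(-15)$. A Chernoff bound for at most $\theta n$ independent Bernoullis then gives
\[
\pr\brk{\abs{\cbc{l\in L_t\setminus U:l\text{ not 2-loose}}}>10^{-3}\theta n \mid U}\leq\exp(-c\theta n)
\]
for an absolute constant $c>0$, using $\ln(10^{-3}/q)\geq 6$.

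It remains to verify that $c\theta n\geq k2^{1-k}n$, which follows from $\theta\geq 3\exp(\rho)(\ln\rho+10)/k$ together with $k\geq k_0$, $\rho\geq\rho_0$ sufficiently large (the $\exp(\rho)$ factor beats the $k^2/2^k$ threshold by an exponential-in-$\rho$ margin). Combining with the $\exp(-k2^{2-k}n)$ error from Lemma~\ref{Lemma_1loose} yields the claimed $1-\exp(-k2^{1-k}n)$ bound. The only genuinely delicate step is the conditional-independence assertion, which rests on the disjointness of the support slot families across distinct true literals; everything else reduces to routine first-moment and Chernoff estimates.
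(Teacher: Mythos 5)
Your proof is correct and follows essentially the same route as the paper's: Lemma~\ref{Lemma_1loose} provides a large $1$-loose set, the disjointness of the planted model's support slot families yields conditional independence of the per-literal failure indicators, and a Chernoff bound finishes. The only (minor) structural difference is the conditioning: you condition on the exact $1$-loose set $U$, which makes the conditional-independence step especially transparent, whereas the paper conditions on the coarser event $\cE$ (a lower bound on the count $X$ of $1$-loose variables together with an upper bound on the total support $S$) and then argues that, given $\cE$ and $S=s$, the negative literals in the supported clauses are independently uniform so that the number $T$ of ``bad'' clauses is binomially distributed.
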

\begin{proof}
To simplify the notation, we are going to condition on 
 $\sigma$ being the all-true assignment; this is without loss of generality.
For each variable $x\in V_t$ we let $S_x$ be the number of clauses supported by $x$.
Moreover, let $S=\sum_{x\in V_t} S_{x}$ and let $X$ be the number of variables $x\in V_t$ such that $S_x=0$.
Thus, $X$ equals the number of $1$-loose variables.

Let $\EE$ be the event that $X\geq \theta n\exp(-\rho)/2$ and $S\leq 2\rho\theta n$.
Since the number of possible clauses with precisely one positive literal in $L_t$
is $\theta n\bink{n-1}{k-1}$, 
$S$ has a binomial distribution $\Bin%
	[\theta n\bink{n-1}{k-1},m/((2^k-1)\bink{n}k)]$.
Therefore, \Lem~\ref{Lemma_1loose} implies that
	\begin{eqnarray}\nonumber
	\pr\brk{\neg\EE}&\leq&\pr\brk{X<\theta n\exp(-\rho)/2}+\pr\brk{S>2\rho\theta n}\\
		&\leq&\exp\brk{-k2^{2-k}n}+\pr\brk{\Bin\bc{\theta n\bink{n-1}{k-1},\frac m{(2^k-1)\bink{n}k}}>2\rho\theta n}.
		\label{eq2loose0}
	\end{eqnarray}
We have
	$$\theta n\bink{n-1}{k-1}\cdot\frac m{(2^k-1)\bink{n}k}\leq \frac{2^k}{2^k-1}\cdot\rho \theta n.$$
Hence, combining~(\ref{eq2loose0}) with the Chernoff bound~(\ref{eqChernoff}), we obtain
for sufficiently large $k$
	\begin{eqnarray}\label{eq2loose1}
	\pr\brk{\neg\EE}&\leq&\exp\brk{-k2^{2-k}n}+\exp\brk{-0.99\rho\theta n}
		\leq2\exp\brk{-k2^{2-k}n},
	\end{eqnarray}
where in the last step we used the assumption that $\rho\geq\rho_0$ for a fixed constant $\rho_0>0$.

Let us now condition on the event that $S=s$ for some number $s\leq2\rho n$,
and on the event $\cE$.
In this conditional distribution
for each of the $s$ clauses supported by some variable in $V_t$ the $k-1$ negative literals that the clause contains are independently uniformly distributed.
Therefore, for each such clause the number of negative literals $\bar y$ whose underlying variable $y$ is $1$-loose
is binomially distributed $\Bin(k-1,X/n)$.
Consequently, the number $T$ of clauses supported by some variable in $V_t$ in which no $1$-loose variable occurs negatively
has a binomial distribution with mean
	$s\cdot\pr\brk{\Bin(k-1,X/n)=0}$.
Hence,
	\begin{eqnarray*}
	\Erw\brk{T|\EE}&\leq&
		2\rho\theta n\cdot\pr\brk{\Bin(k-1,\theta\exp(-\rho)/2)=0}\\
		&=&2\rho\theta n \cdot(1-\theta\exp(-\rho)/2)^{k-1}
			\leq2\rho\theta n \exp(-\theta\exp(-\rho)k/3)			\leq2\exp(-10)\theta n.
	\end{eqnarray*}
Thus, the Chernoff bound~(\ref{eqChernoff}) implies that for $k\geq k_0$ large enough
	\begin{eqnarray}%\nonumber
	\pr\brk{T>0.001\theta n|\EE}&\leq&
		\exp(-0.001\theta n)\leq\exp\brk{-k2^{2-k}n}.
		\label{eq2loose2}
	\end{eqnarray}
Finally, the assertion follows from~(\ref{eq2loose1}) and~(\ref{eq2loose2}).
\qed\end{proof}

\begin{proof}[\Prop~\ref{Prop_2loose}]
Let $\cE$ be the event that a pair $(F,\sigma)\in\Lambda_{n,m}$ has at least $0.999\theta n$ $2$-loose literals.
\Lem~\ref{Lemma_2loose} shows that 
	\begin{equation}\label{eqProp2looseFinal}
	\pr_{\cP_k'\bc{n,m}}\brk\cE\geq1-\exp(-k2^{1-k}n)\geq1-\exp(-krn/4^k).
	\end{equation}
Moreover, \Cor~\ref{Cor_Pnm'} and~(\ref{eqProp2looseFinal}) imply that
	$\pr_{\cU_k\bc{n,m}}\brk\cE=1-o(1)$
as desired.
\qed\end{proof}

\begin{proof}[\Thm~\ref{Thm_vars}, part 1]
By Fact~\ref{XFact_UD} it suffices to prove the desired statement for the experiment {\bf U1}--{\bf U4}.
Thus, let $(\Phi,\sigma)$ be a pair chosen from the distribution $\cU_k\bc{n,m}$.
Without loss of generality we may condition on $\sigma$ being the all-true assignment.
Let $\cL$ be the set of all tame variables that are $2$-loose.
Then by \Prop s~\ref{Prop_tame} and~\ref{Prop_2loose} we have $\cL\geq(0.999-o(1))\theta n\geq0.99\theta n$ \whp\
Assuming that this is the case,
we are going to show that if $x\in \cL$, then
there is a satisfying assignment $\tau$ such that $\tau(x)\not=\sigma(x)$ and $\dist(\tau,\sigma)\leq\ln(n)$.

Thus, fix a variable $x\in\cL$.
If $x$ is $1$-loose, then we can just set $\tau(x)=1-\sigma(x)=0$ and $\tau(y)=\sigma(y)=1$ for all $y\neq x$
to obtain a satisfying assignment with $\dist(\tau,\sigma)=1$,
because $x$ does not support any clauses.
Hence, assume that $x$ is $2$-loose but not $1$-loose.
Let $\cC$ be the set of all clauses supported by $x$ in $(\Phi,\sigma)$.
Any clause $C\in\cC$ contains a negative occurrence of a $1$-loose variable $x_C\in V_t$ in $C$
	(by the very definition of $2$-loose).
Define
	$\tau(x)=0$, $\tau(x_C)=0$ for all $C\in\cC$,
	and $\tau(y)=\sigma(y)=1$ for all other variables $y$.

We claim that $\tau$ is a satisfying assignment.
To see this, assume for contradiction that there is a clause $U$ that is unsatisfied under $\tau$.
Then $U$ contains a variable from $\cbc x\cup\cbc{x_C:C\in\cC}$ positively, while none of these variables
occurs negatively in $U$.
Hence, $U\not\in\cC$.
Moreover, since the variables $x_C$, $C\in\cC$, do not support any clauses,
$U$ indeed contains two variables from the set $\cbc x\cup\cbc{x_C:C\in\cC}$ positively.
There are two possible cases.
\begin{description}
\item[Case 1: $x$ occurs in $U$.]
	Let $C\in\cC$ such that $x_C$ occurs in $U$ as well.
	Then the factor graph contains the cycle $x,C,x_C,U,x$, in contradiction to our assumption that $x$ is tame.
\item[Case 2: $x$ does not occur in $U$.]
	There exist $C_1,C_2\in\cC$ such that $x_{C_1},x_{C_2}$ occur in $C$.
	Hence, the factor graph contains the cycle
		$x,C_1,x_{C_1},C,x_{C_2},C_2,x$,
	once more in contradiction to the assumption that $x$ is tame.
\end{description}
Hence, there is no clause $U$ that is unsatisfied under $\tau$.
Finally, since all the variable $x_C$ with $C\in\cC$ have distance two from $x$ in the factor graph,
and as $x$ is tame, we have $\dist(\sigma,\tau)\leq\ln n$.
\qed\end{proof}

\subsection{Rigid variables}\label{Sec_rigid}

The proof of the second part of \Thm~\ref{Thm_vars} follows the outline given in \Sec~\ref{Sec_rigidOutline}.
Recall the function $\varphi$ from~(\ref{eqvarphi}).

\begin{proposition}\label{Prop_Supp}
Suppose that $k\geq 6$ and $0<r\leq 2^k\ln2-k$.
Let $\mu=\rho\cdot 2^k/(2^k-1)$ and $\zeta=(1+\mu+\mu^2/2)/\exp(\mu)$, and assume that
	$2^k\theta\zeta\varphi(1)>\rho$.
Then \whp\ in a random pair $(\Phi,\sigma)$ chosen from the distribution $\cU_k\bc{n,m}$ no more than
$2\zeta\theta n$ literals in $L_t$ support fewer than three clauses.
\end{proposition}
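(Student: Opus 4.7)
The plan is to prove the bound first under the planted distribution $\cP_k'(n,m)$, where the support counts decouple across variables, and then to transfer the result to $\cU_k(n,m)$ via \Cor~\ref{Cor_Pnm'}. So let $(\Phi',\sigma')$ be sampled from $\cP_k'(n,m)$ and let $X$ denote the number of literals in $L_t$ that support fewer than three clauses. By \Lem~\ref{Lemma_SuppBin} the per-variable support counts $S_x$ are mutually independent binomials, so restricting to the $\theta n$ variables in $V_t$ gives $X\sim\Bin(\theta n,q)$, where
\[
q=\pr\brk{\Bin\bc{\frac{k}{n}\bink{n}{k},\,\frac{m}{(2^k-1)\bink{n}{k}}}<3}.
\]

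Next I would replace $q$ by its Poisson limit. The underlying binomial has mean $km/(n(2^k-1))=\rho\cdot 2^k/(2^k-1)=\mu$, and since the number of trials grows polynomially in $n$ while the individual success probability decays polynomially, standard Poisson approximation yields $q=(1+\mu+\mu^2/2)\eul^{-\mu}+o(1)=\zeta+o(1)$ uniformly in the relevant range of $\mu$. In particular $\Erw X\leq\zeta\theta n(1+o(1))$. Applying the Chernoff bound~(\ref{eqChernoff}) with deviation $t=\zeta\theta n$ (so that the threshold equals twice the mean) therefore gives
\[
\pr\brk{X\geq 2\zeta\theta n}\leq\exp\bc{-\zeta\theta n\cdot\varphi(1)}(1+o(1)).
\]
The hypothesis $2^k\theta\zeta\varphi(1)>\rho$ rearranges to $\zeta\theta\varphi(1)>\rho/2^k$, and hence the right-hand side is at most $\exp(-\rho n/2^k)=\exp(-krn/4^k)$. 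This is precisely the tolerance required by \Cor~\ref{Cor_Pnm'}, so applying that corollary transfers the event $\cbc{X\leq 2\zeta\theta n}$ from $\cP_k'(n,m)$ to $\cU_k(n,m)$ with probability $1-o(1)$, which is exactly the statement of the proposition.

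The main obstacle is the tight calibration of the deviation inequality against the threshold in \Cor~\ref{Cor_Pnm'}: any weaker Chernoff estimate would fail to survive the transfer. The quantitative assumption $2^k\theta\zeta\varphi(1)>\rho$ is tailored for this — $\zeta$ appears as the Poisson probability of fewer than three successes at rate $\mu$, and $\varphi(1)$ as the Chernoff rate for doubling the mean, so the stated hypothesis is essentially the weakest one under which the route through $\cP_k'(n,m)$ goes through. Controlling the Poisson approximation uniformly in $\mu$ (which is bounded by $k\ln 2\cdot 2^k/(2^k-1)$ under $r\leq 2^k\ln2-k$) and absorbing the $1+o(1)$ factors into the slack provided by the strict inequality in the hypothesis is then routine.
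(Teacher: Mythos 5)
Your proposal is correct and follows essentially the same route as the paper: both invoke \Lem~\ref{Lemma_SuppBin} to identify $X$ as a binomial with mean $(1+o(1))\zeta\theta n$ in the planted model $\cP_k'(n,m)$, apply the Chernoff bound~(\ref{eqChernoff}) at a deviation equal to the mean to get the rate $\zeta\theta\varphi(1)$, compare this against $\rho/2^k = kr/4^k$ using the hypothesis, and then transfer to $\cU_k(n,m)$ via \Cor~\ref{Cor_Pnm'}. The extra detail you give on the Poisson approximation of $q$ is a welcome expansion of a step the paper leaves implicit; otherwise the arguments coincide.
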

\begin{proof}
Let $S$ be the number of literals $l\in L_t$ that support fewer than three clauses.
We are going to show that
	\begin{equation}\label{eqPropSupp1}
	\pr_{\cP_k\bc{n,m}}\brk{S>2\zeta\theta n}\leq\exp(-krn/4^k).
	\end{equation}
Then \Cor~\ref{Cor_Pnm'} implies the assertion.

In the distribution $\cP_k'\bc{n,m}$ the random variable $S$ is binomially distributed
with mean $(1+o(1))\theta\zeta n$ by the second part of \Lem~\ref{Lemma_SuppBin}.
Hence, the Chernoff bound~(\ref{eqChernoff}) shows that
	\begin{eqnarray}\label{eqexSupp1}
	\pr_{\cP_k\bc{n,m}}\brk{S>2\zeta n}&\leq&
		\exp\bc{-(1+o(1))\theta\zeta\varphi(1)n}.
	\end{eqnarray}
By the assumptions on $\mu$ and $\theta$ we have 
	$\theta\zeta\varphi(1)>\rho/2^k$;
hence, (\ref{eqPropSupp1}) follows from~(\ref{eqexSupp1}).
\qed\end{proof}

Remember that a set $\cS\subset L_t$ of literals $t$-{\em self-contained}
if each literal $l\in\cS$ supports at least two clauses that contain
literals from $\cbc{x_1,\bar x_1,\ldots,x_t,\bar x_t}\cup \cS\cup\bar\cS$ only, where $\bar\cS$ is the set of all negations of literals in $\cS$.

\begin{proposition}\label{Prop_frozen}
For any $k\geq3$ there is a number $\chi=\chi(k)>0$ such that for any
$0<r\leq2^k\ln2-k$ the following is true.
Let $(\Phi,\sigma)$ be a random pair chosen from the distribution $\cU_k\bc{n,m}$.
Then \whp\ for any $t$-self-contained set $\cS$ all variables $x\in\cS\cup\bar\cS$ are $\chi n$-rigid.
\end{proposition}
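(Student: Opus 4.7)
\textbf{Proof plan for Proposition~\ref{Prop_frozen}.}
The overall strategy is the standard ``avalanche'' argument: attempting to flip a variable associated with a self-contained set forces a cascade of flips, which is then ruled out by the expansion property supplied by Lemma~\ref{Lemma_exp_small}. I will choose $\chi = \chi(k)$ to be the constant from Lemma~\ref{Lemma_exp_small}; since $r \leq 2^k \ln 2 - k < 2^k$ and since $\PHI$ is satisfiable \whp\ in this range, the conclusion of Lemma~\ref{Lemma_exp_small} holds \whp\ for the formula-marginal of $\cU_k(n,m)$. I fix such a ``good'' pair $(\Phi,\sigma)$ and a self-contained $\cS\subset L_t$, and consider a variable $x$ whose literal lies in $\cS\cup\bar\cS$. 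Let $V(\cS)\subseteq V_t$ denote the set of underlying variables. Suppose $\tau\in\cS(\Phi_t)$ satisfies $\tau(x)\neq\sigma(x)$; let $F=\{y\in V_t:\tau(y)\neq\sigma(y)\}$, so $d(\sigma,\tau)=|F|$, and set $F' = F\cap V(\cS)$. Since $x\in F'$, we have $|F'|\geq 1$.

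The heart of the argument is the following local observation. For each $y\in F'$, write $l_y$ for the literal in $\cS$ supported by $y$ (so $\sigma(l_y)=1$). By self-containedness, $l_y$ supports at least two clauses $C$ of $\Phi$ whose remaining literals come from $\{x_1,\bar x_1,\ldots,x_t,\bar x_t\}\cup\cS\cup\bar\cS$. Because $\sigma$ satisfies $C$ and $l_y$ is the unique true literal, every decimated literal appearing in $C$ is false under $\sigma$; hence in $\Phi_t$ the clause $C$ persists and its remaining literals lie in $\{l_y\}\cup\bar\cS$. Flipping $y$ makes $l_y$ false under $\tau$, so the truth of $C$ under $\tau$ requires some $\bar l'\in\bar\cS$ in $C$ to flip sign, which forces $|l'|\in F\cap V(\cS)=F'$. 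Thus each clause supported by $y$ contains $y$ together with at least one further variable from $F'$.

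Next I collect these clauses: since a clause has a unique supporting literal under $\sigma$, the pairs of supported clauses associated with distinct $y_1,y_2\in F'$ are disjoint. Summing gives at least $2|F'|$ clauses of $\Phi$ each containing at least two variables from $F'$. If $|F'|\leq\chi n$ then this directly contradicts Lemma~\ref{Lemma_exp_small}, so we must have $|F'|>\chi n$, whence $d(\sigma,\tau)\geq|F'|>\chi n$. This proves that $x$ is $\chi n$-rigid, as desired.

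The only subtle point, and the one I expect to be the main thing to get right, is the interplay between $\Phi$ and $\Phi_t$: I must verify that the clauses witnessing self-containedness (which are defined at the level of $\Phi$) survive decimation and retain their ``unique supporter with all other literals in $\bar\cS$'' structure in $\Phi_t$. This is where the inclusion of decimated literals in the self-containedness definition is essential: those literals are forced to be false under $\sigma$ and so are simply erased upon decimation, leaving a clause whose flipped-literal bookkeeping is exactly what the avalanche counting needs. Everything else is a one-line application of Lemma~\ref{Lemma_exp_small}.
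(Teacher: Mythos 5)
Your proof is correct and follows essentially the same argument as the paper: you choose $\chi$ from Lemma~\ref{Lemma_exp_small}, observe that each flipped variable in the self-contained set contributes two distinct supported clauses each containing a second flipped variable from the set, and conclude that expansion forces the Hamming distance to exceed $\chi n$. The extra care you take about the $\Phi$ versus $\Phi_t$ distinction (decimated literals being false under $\sigma$ and hence erased) is exactly the implicit step the paper's shorter exposition relies on, so the two proofs match.
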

\begin{proof}
Let $(\Phi,\sigma)$ be a random pair chosen from the distribution $\cU_k\bc{n,m}$.
Without loss of generality we may condition on $\sigma$ being the all-true assignment.
By \Lem~\ref{Lemma_exp_small} there is a number
$\chi=\chi(k)>0$ such that~(\ref{eq_exp_small}) is satisfied \whp,
and we are going to assume that this is the case.

Let $\cS$ be a self-contained set.
Suppose that $\tau$ is a satisfying assignment such that the set
$Q$ of all variables $x\in\cS\cup\bar\cS$ such that $\tau(x)\not=\sigma(x)$ is non-empty.
For each variable $x\in Q$ there are two clauses $C_1(x),C_2(x)$ that
are supported by $x$ in $\sigma$ and that consist of literals from $\cS\cup\bar\cS$ only (because $\cS$ is self-contained).
Since $\tau$ is satisfying and $\tau(x)\not=\sigma(x)$, both $C_1(x)$ and $C_2(x)$ contain another variable from $Q$.
Hence, there are at least $2|Q|$ clauses that contain at least two variables from $Q$.
Thus, (\ref{eq_exp_small}) implies that $|Q|>\chi n$, and consequently
$\dist(\sigma,\tau)\geq|Q|>\chi n$.
\qed\end{proof}

\begin{proposition}\label{Prop_RandPoisson}
Suppose that $k\geq4$ and $0<r\leq2^k\ln2-k$, and that $0\leq\theta\leq1$.
Set
	$$\mu=\frac{\rho 2^k}{2^k-1},\ \zeta=\frac{1+\mu+\mu^2/2}{\exp(\mu)},\ 
		\lambda=1-(1-3\theta\zeta)^{k-1},\
		\gamma=\frac{\mu\cdot\bc{\exp(\lambda\mu)-1-\lambda\mu}}{(1-\zeta)\exp(\mu)}$$
and let $h(x)=-x\ln x-(1-x)\ln(1-x)$. If $\zeta<1/3$ and
	\begin{equation}\label{eqRandPoisson}
	\theta (\zeta\ln(\gamma)+h(\zeta))+\rho/2^k<0,
	\end{equation}
then a random pair $(\Phi,\sigma)$ chosen from the distribution $\cU_k\bc{n,m}$ has one of the following properties \whp
\begin{enumerate}
\item[a.] More than $2\theta\zeta n$ literals in $L_t$
		 that are true under $\sigma$ literals support fewer than three clauses.
\item[b.] There is a $t$-self-contained set of size $(1-3\zeta)\theta n$.
\end{enumerate}
\end{proposition}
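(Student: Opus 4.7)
My plan is to prove the proposition via a greedy peeling argument carried out inside the planted model $\cP_k'(n,m)$, followed by a transfer of the conclusion to $\cU_k(n,m)$ via Corollary~\ref{Cor_Pnm'}. The overall goal is to show that the bad event---namely, the complement of (a) and (b) simultaneously---has probability at most $\exp(-krn/4^k)$ under $\cP_k'(n,m)$.

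First I would set up the initial pool. Applying Lemma~\ref{Lemma_SuppBin} together with the Chernoff bound as in the proof of \Prop~\ref{Prop_Supp}, with probability at least $1-\exp(-krn/4^k)$ at most $2\zeta\theta n$ of the true literals in $L_t$ support fewer than three clauses. If in fact the bound is violated, case (a) holds and we are done, so I may condition on the set $B\subset V_t$ of variables whose true literal has support ${<}3$ satisfying $|B|\leq 2\zeta\theta n$. Then I would define $S_0=V_t\setminus B$ (so $|S_0|\geq(1-2\zeta)\theta n$) and run a greedy peeling process: while there exists $x\in S_i$ such that fewer than two of the clauses supported by $x$ have all their free-variable literals in $S_i\cup\bar S_i$, remove $x$ to obtain $S_{i+1}$. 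When the process terminates at $S^*$, the set $S^*$ is $t$-self-contained by construction, and it suffices to show that the peeling removes at most $\zeta\theta n$ variables, yielding $|S^*|\geq(1-3\zeta)\theta n$.

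The heart of the argument is a first moment bound on bad peeling outcomes. If the peeling removes $\geq\zeta\theta n$ variables, then one can select a subset $T_1\subseteq V_t\setminus B$ of exactly $\lceil\zeta\theta n\rceil$ peeled-off variables; set $T=T_1\cup B$, so $|T|\leq 3\zeta\theta n$. By the peeling rule, every $x\in T_1$ has $S_x\geq 3$ and at least two of its supported clauses must have some free-variable literal whose variable lies in $T$ (otherwise $x$ would not have been peeled at the step when only $T$ had been removed). In $\cP_k'(n,m)$ the sets of clauses supported by different variables are disjoint and hence independent, and conditional on $S_x$, the $k-1$ free positions of each supported clause of $x$ are independently uniform over $V_t$, so each hits $T$ with probability at most $\lambda=1-(1-3\theta\zeta)^{k-1}$. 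Passing to the Poisson limit $S_x\sim\Po(\mu)$, a direct computation shows that the probability that $\geq2$ of $x$'s supported clauses hit $T$, conditioned on $S_x\geq3$, is bounded above by $\gamma=\mu\bigl(\exp(\lambda\mu)-1-\lambda\mu\bigr)/\bigl((1-\zeta)\exp(\mu)\bigr)$ (the factor $1-\zeta=\pr[\Po(\mu)\geq3]$ comes from the conditioning, and the numerator is extracted after summing the joint probability over the possible values of $S_x$). Since the per-variable events are independent over $x\in T_1$, the expected number of admissible choices of $T_1$ is bounded by
\begin{equation*}
\binom{\theta n}{\lceil\zeta\theta n\rceil}\cdot\gamma^{\lceil\zeta\theta n\rceil}\leq\exp\bigl(n\theta(h(\zeta)+\zeta\ln\gamma)\bigr)\leq\exp(-\rho n/2^k),
\end{equation*}
where the last inequality is precisely the hypothesis (\ref{eqRandPoisson}). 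Since $\rho/2^k=kr/4^k$, Markov's inequality gives failure probability at most $\exp(-krn/4^k)$; combining with the estimate for the initial low-support stage and applying \Cor~\ref{Cor_Pnm'} transfers the dichotomy $(a)\vee(b)$ to $\cU_k(n,m)$, completing the proof.

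The main obstacle will be the per-variable probability estimate yielding exactly the constant $\gamma$. The subtlety is that the event ``at least two hits'' must be intersected with ``support $\geq 3$'' before dividing by $\pr[\Po(\mu)\geq 3]$, so the cases $S_x=2\text{ hits and }\geq 1$ non-hit, $S_x\geq 3\text{ hits}$, etc., have to be handled carefully; the Poisson-limit identity that the number of hits and non-hits are independent Poisson variables (with means $\lambda\mu$ and $(1-\lambda)\mu$) makes this work out cleanly and produces the factor $\mu\exp(-\mu)$ that distinguishes $\gamma$ from the crude bound $\pr[\Po(\lambda\mu)\geq 2]/(1-\zeta)$. A secondary issue is bounding the binomial-to-Poisson error, but this is absorbed into the definition $\mu=\rho\cdot 2^k/(2^k-1)$ and into the slack left by (\ref{eqRandPoisson}).
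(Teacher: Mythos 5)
Your overall framework is right and matches the paper's: work in the planted model, condition on the low-support set $B$ being small, exhibit a self-contained set of the required size by a union bound/first moment over candidate ``bad'' sets, and transfer to $\cU_k(n,m)$ via Corollary~\ref{Cor_Pnm'}. The decomposition $T=T_1\cup B$ with $|T|\leq 3\zeta\theta n$ is exactly the paper's $X\cup Z$ with $|X|=\zeta\theta n$, $|Z|=2\zeta\theta n$.

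However, there is a genuine error in the per-variable event, and it is fatal to the first-moment computation. You write that every peeled $x\in T_1$ has \emph{at least two} supported clauses hitting $T$, and then claim $\pr\brk{\text{$\geq 2$ hits}\,\big|\,S_x\geq 3}\leq\gamma$. The first statement is a correct but strictly weaker consequence of the peeling rule: if $x$ was peeled, then fewer than two of its supported clauses avoided the removed set, so in fact \emph{at least $S_x-1$} of them (all but at most one) hit $T$. The paper uses precisely this ``at most one miss'' event, $T_x\geq S_x-1$, whose conditional probability given $S_x=j$ is $\approx\pr[\Bin(j,\lambda)\geq j-1]\leq j\lambda^{j-1}$; summing $j\lambda^{j-1}\cdot\mu^j e^{-\mu}/(j!(1-\zeta))$ over $j\geq 3$ is what produces $\gamma=\mu(\exp(\lambda\mu)-1-\lambda\mu)/((1-\zeta)e^\mu)$. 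The factor $\mu e^{-\mu}$ you correctly flag as distinguishing $\gamma$ from the naive $\pr[\Po(\lambda\mu)\geq 2]/(1-\zeta)$ is a signature of the $j\lambda^{j-1}$ term, i.e., of ``$\leq 1$ miss,'' not of ``$\geq 2$ hits.'' Concretely, in the Poisson regime your event has probability on the order of $(\lambda\mu)^2/2$ while $\gamma\approx\mu e^{-\mu}(\lambda\mu)^2/2$ — smaller by a factor of roughly $e^\mu/\mu$. For $\mu\geq\rho_0$ this gap is enormous and the bound $\binom{\theta n}{\zeta\theta n}\gamma^{\zeta\theta n}\leq\exp(-\rho n/2^k)$, which is exactly hypothesis~(\ref{eqRandPoisson}), is no longer implied once $\gamma$ is replaced by your larger probability. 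Repair: replace ``at least two hits'' with ``at most one of $x$'s supported clauses avoids $T$,'' and the rest of your argument goes through as intended.
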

\begin{proof}
Let $p=m/((2^k-1)\bink nk)$.
Let $(\Phi,\sigma)$ be chosen from the distribution $\cP_k\bc{n,m}$.
We may condition on $\sigma$ being the all-true assignment, and on the event that
at most $2\zeta\theta n$ literals 
amongst $x_{t+1},\bar x_{t+1},\ldots,x_n,\bar x_n$
that are true under $\sigma$ support fewer than three clauses
(as otherwise a.\ occurs).
In fact, fix a set $Z$ of $2\zeta\theta  n$ variables and
condition on the event $\cE$ that all variables that support at most two clauses lie in $Z$.
For any variable $x\not\in Z$ we let $S_x$ be the number of clauses supported by $x$.
Then the first part of~\Lem~\ref{Lemma_SuppBin} implies that $S_x$ has
a binomial distribution $\Bin(k\bink{n-1}{k-1},p)$ conditioned on the outcome being at least three.
As a consequence, 
for any $j\geq3$
	\begin{eqnarray}
	\pr\brk{S_x=j|\cE}&=&\frac{\pr\brk{\Bin(k\bink{n-1}{k-1},p)=j}}{\pr\brk{\Bin(k\bink{n-1}{k-1},p)<3}}
		\leq\frac{(1+o(1))\mu^j}{j!\exp(\mu)(1-\zeta)}+O\bc{\exp(-j/\mu)/n}.
		\label{eqRandPoisson1}
	\end{eqnarray}

Let $X\subset \cbc{x_{t+1},\ldots,x_n}\setminus Z$ be a set of $\zeta\theta  n$ variables.
For each $x\in X$ we let $T_x$ be the number of clauses supported by $x$ in which
a variable from $X\cup Z$ occurs negatively.
In a random clause supported by $x$ the variables underlying the $k-1$ negative literals in that clause are distributed
uniformly over $V$.
Therefore, given $\cE$ the probability that such a clause contains at least one variable from $X\cup Z$ is
	$$1-(1-|X\cup Z|/n)^{k-1}+o(1)=1-(1-3\theta\zeta)^{k-1}+o(1)\sim\lambda.$$
Hence, if we condition on both $\cE$ and $S_x=j$, then the probability that $T_x\geq S_x-1$ equals
$j\cdot(\lambda+o(1))^{j-1}$.
Thus, letting $\gamma=\mu\cdot\bc{\exp(\lambda\mu)-1-\lambda\mu}/(1-\zeta)\exp(\mu)$,
we obtain from (\ref{eqRandPoisson1})
	\begin{eqnarray*}
	\pr\brk{T_x\geq S_x-1|\cE}&=&\sum_{j\geq3}\pr\brk{T_x\geq S_x-1|\cE\mbox{ and }S_x=j}\cdot\pr\brk{S_x=j|\cE}\\
		&\leq&(1+o(1))\sum_{j\geq3}\frac{j\lambda^{j-1}\mu^j}{j!\exp(\mu)(1-\zeta)}
			\sim\gamma.
	\end{eqnarray*}
Given that $\cE$ occurs the events $T_x\geq S_x-1$ are mutually independent for all $x\in X$.
Therefore,
	\begin{eqnarray*}
	\pr\brk{\forall x\in X:T_x\geq S_x-1|\cE}&\leq&(\gamma+o(1))^{\theta\zeta n}.
	\end{eqnarray*}
If b.\ does not occur, then there is a set $X\subset V\setminus Z$ of size $\zeta n$ such that $T_x\geq S_x-1$
for all $x\in X$.
Hence, by the union bound the probability that b.\ does not occur is at most
	\begin{eqnarray*}
	\pr\brk{\exists X\subset V\setminus Z,\,|X|=\zeta n:\forall x\in X:T_x\geq S_x-1|\cE}
		&\leq&\hspace{-3mm}\sum_{X\subset V\setminus Z,\,|X|=\theta\zeta n}\hspace{-3mm}
			\pr\brk{\forall x\in X:T_x\geq S_x-1|\cE}\\
	&\hspace{-12cm}\leq&\hspace{-6cm}\;
		\bink{(1-2\zeta)\theta n}{\theta\zeta n}\cdot(\gamma+o(1))^{\theta\zeta n}
		\leq\exp\brk{(1-2\zeta)h(\zeta/(1-2\zeta))\cdot n}\cdot(\gamma+o(1))^{\zeta n}\\
	&\hspace{-12cm}\leq&\hspace{-6cm}\;
		\exp\brk{\theta n\cdot\bc{(1-2\zeta)h(\zeta/(1-2\zeta))+\zeta\cdot\ln\gamma+o(1)}}<\exp(-\rho n/2^k)\qquad
			\mbox{[by~(\ref{eqRandPoisson})]},
	\end{eqnarray*}
as desired.
Finally, the assertion follows directly from 
\Cor~\ref{Cor_Pnm'}.
\qed\end{proof}

\begin{proof}[\Thm~\ref{Thm_vars}, part~2]
Suppose that $1/k\leq\theta\leq\exp(\rho)/(\rho^3k)$.
The goal is to verify~(\ref{eqRandPoisson}).
Since
	$h(\zeta)\leq\zeta(1-\ln\zeta),$
proving (\ref{eqRandPoisson}) reduces to showing
	$\theta\zeta\brk{\ln\gamma+1-\ln\zeta}<-\rho/2^k,$
i.e.,
	\begin{equation}\label{eqRandPoissonGen}
	\theta\zeta\ln(\eul\gamma/\zeta)<-\rho/2^k.
	\end{equation}
Plugging in the definitions of $\gamma$ and $\zeta$, we see that
	\begin{eqnarray*}
	\ln\bcfr{\eul\gamma}\zeta&=&\ln\brk{\frac{\eul\mu(\exp(\lambda\mu)-\lambda\mu-1)}{(1-\zeta)(1+\mu+\mu^2/2)}}
		\leq\ln\bcfr{5\eul\brk{\exp(\lambda\mu)-\lambda\mu-1}}{\mu}\qquad\mbox{[for $\mu$ not too small].}
	\end{eqnarray*}
Since $3\theta\zeta\leq4/(k\rho)$ for $\rho\geq\rho_0$ sufficiently large,
we have $\lambda=1-(1-3\theta\zeta)^{k-1}\leq4k\theta\zeta$.
Hence,
	$$\lambda\mu\leq 4k\mu\theta\zeta\leq\frac{4\mu^2}{\exp(\mu)}\cdot\frac{\exp(\mu)}{\mu^3}\leq4/\mu.$$
Therefore, we obtain for $\mu\geq\rho\geq\rho_0$ large
	\begin{eqnarray*}
	\ln\bcfr{\eul\gamma}\zeta&\leq&\ln\bcfr{4\eul(\lambda\mu)^2}{\mu}\leq\ln\bc{64\eul/\mu^3}\leq
		-1.
	\end{eqnarray*}
As $\theta\zeta\geq\zeta/k\geq\frac12\mu^2\exp(-\mu)\geq\frac13\rho^2/2^{k}$
for $k\geq k_0$ and $\rho\geq\rho_0$ not too small,
	we thus obtain~(\ref{eqRandPoissonGen}).
\qed\end{proof}

\subsection{Forced variables}\label{Sec_unit}

Let $(\Phi,\sigma)$ be a formula/assignment pair.
A clause $C$ \emph{forces} a variable $x\in V_t$ if $C$ contains $k-1$ literals
from $\cbc{x_1,\bar x_1,\ldots,x_t,\bar x_t}$, none of which satisfies $C$ under $\sigma$,
and either the literal $x$ or $\bar x$, which does.

\begin{lemma}\label{Lemma_forced}
Suppose that $\rho\geq\rho_0$, $k\geq k_0$, and $k\theta\sim\ln(\rho)-10$.
Then \whp\ in a pair $(\Phi,\sigma)$ chosen from the distribution $\cU_k(n,m)$
at least $0.991\theta n$ variables in $V_t$ are forced.
\end{lemma}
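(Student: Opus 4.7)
The plan is to follow the standard two-step approach used throughout the paper: establish the bound in the planted distribution $\cP_k'(n,m)$ with failure probability at most $\exp(-krn/4^k)$, and then apply Corollary~\ref{Cor_Pnm'} to transfer it to $\cU_k(n,m)$. Thus let $(\Phi,\sigma)$ be drawn from $\cP_k'(n,m)$; by symmetry one may condition on $\sigma$ being the all-true assignment.

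For each $x\in V_t$ I would let $F_x$ be the number of clauses of $\Phi$ of the form $\cbc{x,\bar x_{i_1},\ldots,\bar x_{i_{k-1}}}$ with $i_1,\ldots,i_{k-1}\in\cbc{1,\ldots,t}$; these are precisely the clauses that force $x$ under $\sigma$. There are $N=k\bink{t}{k-1}$ such possible clauses (choose the slot for $x$, then choose the $k-1$ decimated variables), and in $\cP_k'(n,m)$ each is included independently with probability $p=m/((2^k-1)\bink{n}{k})$, so $F_x\sim\Bin(N,p)$. A direct calculation analogous to \Lem~\ref{Lemma_SuppBin} yields $\mu:=\Erw[F_x]\sim k\rho(1-\theta)^{k-1}$. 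Substituting $k\theta\sim\ln\rho-10$ gives $(1-\theta)^{k-1}\sim\exp(-k\theta)=\eul^{10}/\rho$, so $\mu\sim k\eul^{10}$, arbitrarily large for $k\geq k_0$. In particular $\pr\brk{F_x=0}=(1-p)^N\leq\exp(-\mu)$.

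Crucially, for distinct $x,y\in V_t$ the random variables $F_x$ and $F_y$ are independent: the sets of potential forcing clauses for $x$ and for $y$ are disjoint (each such clause is identified by its unique undecimated literal), and clauses are included independently in $\cP_k'(n,m)$. Therefore the count $U=|\cbc{x\in V_t:F_x=0}|$ of non-forced variables is stochastically dominated by $\Bin(\theta n,\exp(-\mu))$. The Chernoff bound~(\ref{eqChernoff}) then gives
	$$\pr\brk{U>0.009\,\theta n}\leq\exp(-\Omega(\mu\theta n))=\exp(-\Omega(k\eul^{10}\theta n)),$$
which is well below $\exp(-krn/4^k)=\exp(-\rho n/2^k)$ for all $k\geq k_0$, $\rho\geq\rho_0$. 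Corollary~\ref{Cor_Pnm'} then transfers this estimate to $\cU_k(n,m)$, yielding that at least $0.991\,\theta n$ variables in $V_t$ are forced \whp

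The main obstacle I anticipate is quantitative rather than conceptual: one must keep the approximation $(1-\theta)^{k-1}\sim\exp(-k\theta)$ sharp enough and control the lower-order corrections inside the exponent so that $\mu\geq\eul^{10}k/2$, say, uniformly in $n$. The additive constant $-10$ in the hypothesis $k\theta\sim\ln\rho-10$ is precisely the slack designed to ensure that $\exp(-\mu)\ll 0.009$ and that the Chernoff concentration beats the $\exp(-krn/4^k)$ threshold required by Corollary~\ref{Cor_Pnm'}; independence of the $F_x$'s is what makes the Chernoff bound available in the first place.
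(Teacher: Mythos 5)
Your proof follows the same route as the paper: pass to the planted model $\cP_k'(n,m)$, exploit independence of the $F_x$ to get a Chernoff bound with failure probability below $\exp(-\rho n/2^k)$, and transfer to $\cU_k(n,m)$ via \Cor~\ref{Cor_Pnm'}. The independence observation (the potential forcing clauses for distinct $x,y$ are disjoint because each such clause contains exactly one undecimated literal) is correct and is exactly what the paper uses.

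There is, however, a quantitative slip. You take $N=k\bink{t}{k-1}$ for the number of possible forcing clauses of $x$, but the correct count is $\bink{t}{k-1}$: under the all-true assignment a forcing clause for $x$ is an unordered $k$-set of signed variables consisting of $x$ with a positive sign and $k-1$ negatively signed variables drawn from $\cbc{x_1,\ldots,x_t}$, and there is no extra ``slot'' to choose. (You inherited the spurious factor $k$ from the paper's \Lem~\ref{Lemma_SuppBin}, whose displayed identity $k\bink{n-1}{k-1}=\frac kn\bink nk$ is arithmetically off; the subsequent expression $\Bin(\frac kn\bink nk,p)$ there is the correct one, and that is what the paper actually uses, yielding $\mu=\rho\cdot 2^k/(2^k-1)\approx\rho$.) Consequently $\Erw[F_x]\sim\rho(1-\theta)^{k-1}$, which under $k\theta\sim\ln\rho-10$ evaluates to a \emph{fixed constant} $\approx\eul^{10}$ (the paper's proof, after absorbing the $k\theta^2$ correction, records the lower bound $\exp(5)$), not $\sim k\eul^{10}$. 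So the claim that $\mu$ is ``arbitrarily large for $k\geq k_0$'' is incorrect. This does not break your argument: a constant lower bound on $\Erw[F_x]$ already makes $\exp(-\Erw[F_x])\ll 0.009$ and drives $\pr[U>0.009\theta n]$ below $\exp(-\rho n/2^k)$, since $\theta\geq\Omega(\ln\rho)/k\gg\rho/2^k$. But the quantitative claims as written should be corrected before the proof can be considered clean.
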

\begin{proof}
Let $\cF$ be the event that at least $0.991\theta n$ variables in $V_t$ are forced.
We are going to show that
	\begin{equation}\label{eqforced1}
	\pr_{\cP_k'(n,m)}\brk{\cF}\geq1-\exp(-1.1\rho/2^k),
	\end{equation}
so that the assertion follows from \Cor~\ref{Cor_Pnm'}.

Thus, let $(\Phi',\sigma')$ be a pair chosen from the distribution $\cP_k'(n,m)$.
We may assume without loss of generality that $\sigma'$ is the all-true assignment.
For each variable $x\in V_t$ the number of clauses that $x$ supports has a
binomial distribution with mean $\mu=\rho\cdot 2^k/(2^k-1)$.
Furthermore, if $C$ is a random clauses supported by $x$,
then $C$ contains $k-1$ random negative literals;
the probability that all of these are in $V\setminus V_t$ equals $(1-\theta+o(1))^{k-1}$.
Hence, the number $F_x$ of forcing clauses for $x$ is binomially distributed with mean
	\begin{eqnarray*}
	\Erw\brk{F_x}&=&\mu(1-\theta+o(1))^{k-1}\geq\rho(1-\theta)^{k-1}\\
		&\geq&\rho\exp\brk{-(\theta+\theta^2)(k-1)}\geq\rho\exp\brk{-\theta k-\theta^2k}\geq\exp(5).
	\end{eqnarray*}
Therefore, for any $x\in V_t$ we have
	$\pr\brk{F_x=0}\leq\exp(-\exp(5)),$
and the events $(\cbc{F_x=0})_{x\in V_t}$ are mutually independent.
Hence, the number $Z$ of variables $x\in V_t$ with $F_x=0$ is binomially distributed
with mean $\exp(-\exp(-5))\theta n$, and thus
	$$\pr\brk{Z\geq0.009\theta n}\leq\exp(-0.009\theta n)\leq\exp(-1.1\rho/2^k)$$
by Chernoff bounds.
This proves~(\ref{eqforced1}).
\qed\end{proof}

\begin{proof}[\Thm~\ref{Thm_vars}, part~3]
To complete the proof of \Thm~\ref{Thm_vars}, part~3, we need to deal
with general values $1/n\ll\theta\leq\theta_0=(\ln(\rho)-10)/k$.
Let $t=(1-\theta)n$ and $t_0=(1-\theta_0)n$.
To obtain a pair $(\Phi_t,\sigma_t)$ from the distribution {\bf U1}--{\bf U4},
one can proceed as follows.
First, choose a pair $(\Phi_{t_0},\sigma_{t_0})$ from the distribution {\bf U1}--{\bf U4} with $t_0$ variables decimated.
Then, assign the variables in $x\in V_{t_0}\setminus V_t$ with the truth values
	$\sigma_{t_0}(x)$, simplify the formula,
and let $\sigma_t(y)=\sigma_{t_0}(y)$ for all $y\in V_t$.
We are going to use this experiment to analyze the number of forced variables in $(\Phi_t,\sigma_t)$.

The above experiment shows that any variable $x\in V_t$ that is forced in $(\Phi_{t_0},\sigma_0)$ remains forced in $(\Phi_t,\sigma_t)$.
Let $\cF$ be the set of forced variables in $(\Phi_{t_0},\sigma_0)$.
Given that $|\cF|=j$, the set $\cF$ is a uniformly random subset of $V_{t_0}$.
Hence, if we condition on the event that $\abs{\cF}\geq0.991\theta_0 n$,
then $\abs{\cF\cap V_t}$ has a hypergeometric distribution with mean at least $0.991\theta n$.
Therefore, by Chebyshev's inequality, we have $\abs{\cF\cap V_t}\geq(0.991\theta-o(1)) n\geq0.99\theta n$ \whp\
(here we use that $\theta n\gg1$).
Thus, the theorem follows from \Lem~\ref{Lemma_forced}.
\qed\end{proof}

\section{Proof of \Thm~\ref{Thm_shape}}\label{Sec_shattering}

\subsection{Shattering}

In this section we prove the first part of \Thm~\ref{Thm_shape}.
Consider a pair $(\Phi,\sigma)$ chosen from the planted model $\cP_k(n,m)$.
Let $\Phi_{t,\sigma}$ denote the formula obtained from $\Phi$ by substituting
the values $\sigma(x_1),\ldots,\sigma(x_t)$ for the first $t$ variables.
Without loss of generality, we may assume that $\sigma=\vecone$ is the all-true assignment.
The main step of the proof is the summarized in the following proposition.

\begin{proposition}\label{Prop_icy_sat}
Let $k\geq6$ and $r>0$ be fixed.
Moreover, let  $0<\theta\leq1$ and let
	$$\psi:(0,1)\rightarrow\RR,\quad\alpha\mapsto-\alpha\theta\ln\alpha-(1-\alpha)\theta\ln(1-\alpha)+
			r\ln\bc{1-\frac{1-(1-\alpha\theta)^k}{2^k-1}}.$$
Suppose that there is a number $a\in(0,1)$  such that
	\begin{equation}\label{eqpsiShatters}
	\psi(a)+\rho/2^k<0\quad\mbox{ and }
		\sup_{0<\alpha<a}\psi(\alpha)<\theta\ln2+2^k\rho\ln(1-2^{-k})/k-\rho/2^k.
	\end{equation}
Then there is $\eps=\eps(k,\rho)$ such that for $\Phi_t$ generated by the experiment {\bf U1}--{\bf U4},
the set $\cS(\Phi_t)$ is $(a-\eps,a+\eps)$-shattered.
\end{proposition}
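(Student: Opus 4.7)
The plan is to apply the first moment method to the random variable $X_\alpha(\Phi_t,\sigma_t)$ in the planted distribution $\cP_k(n,m)$ and then transfer the conclusions to $\cU_k(n,m)$ via Theorem~\ref{Cor_SATExchange}. Without loss of generality condition on $\sigma=\vecone$. A satisfying assignment $\tau\in\cS(\Phi_t)$ at Hamming distance $\alpha\theta n$ from $\sigma_t$ corresponds uniquely to an extension $\bar\tau\in\cbc{0,1}^V$ that agrees with $\sigma$ on $x_1,\dots,x_t$ and differs from it on exactly $\alpha\theta n$ of the remaining variables, and that satisfies $\Phi$. The number of such extensions is $\binom{\theta n}{\alpha\theta n}$. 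A short case analysis shows that, under $\cP_k$, a uniformly random clause satisfied by $\sigma$ is also satisfied by $\bar\tau$ with probability $q=1-\bc{1-(1-\alpha\theta)^k}/(2^k-1)$, independently across the $m$ clauses; here the factor $(1-\alpha\theta)^k$ is the probability that no flipped variable appears in a random clause, and the factor $1/(2^k-1)$ is the probability that the unique sign pattern falsifying the clause under $\bar\tau$ coincides with a pattern satisfied by $\sigma$. Hence $\Erw_{\cP_k}X_\alpha=\binom{\theta n}{\alpha\theta n}q^m$, and Stirling yields $\tfrac{1}{n}\ln\Erw_{\cP_k}X_\alpha\leq\psi(\alpha)+o(1)$.

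By continuity of $\psi$ and the first strict inequality in~(\ref{eqpsiShatters}), I would choose $\eps=\eps(k,\rho)>0$ so that $\sup_{\alpha\in(a-\eps,a+\eps)}\psi(\alpha)<-\rho/2^k$. Markov's inequality and a union bound over the $O(n)$ admissible values of $\alpha$ in that window then give $\pr_{\cP_k}\brk{\exists\alpha\in(a-\eps,a+\eps):X_\alpha(\Phi_t,\sigma_t)>0}\leq\exp(-\rho n/2^k)$ for large $n$. Theorem~\ref{Cor_SATExchange} transfers this event to the model $\cU_k$, so \whp\ the pair $(\Phi_t,\SIGMA_t)$ produced by {\bf U1}--{\bf U4} satisfies $X_\alpha(\Phi_t,\SIGMA_t)=0$ for every admissible $\alpha\in(a-\eps,a+\eps)$. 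A parallel first-moment computation for $\sum_{\alpha\leq a-\eps}X_\alpha$, using the second strict inequality in~(\ref{eqpsiShatters}) to bound $\psi$ uniformly over $(0,a-\eps]$ and combining with the lower bound on $|\cS(\Phi_t)|$ furnished by Corollary~\ref{Cor_count}, shows that \whp\ only an $\exp(-\Omega(n))$ fraction of $\cS(\Phi_t)$ lies within Hamming distance $(a-\eps)\theta n$ of $\SIGMA_t$.

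To extract the decomposition I would invoke Fact~\ref{XFact_UD}: the two statements above are facts about the joint distribution of $(\Phi_t,\SIGMA_t)$, so \whp\ over $\Phi_t$ they hold for almost all $\sigma\in\cS(\Phi_t)$ simultaneously in place of $\SIGMA_t$. Call such $\sigma$ \emph{good} and let $\cS^*$ be the set of good assignments; then $|\cS^*|=(1-o(1))|\cS(\Phi_t)|$. Build clusters greedily: pick $\sigma_{t,1}\in\cS^*$ and set $R_1=\cbc{\tau\in\cS(\Phi_t):\dist(\tau,\sigma_{t,1})\leq(a-\eps)\theta n}$; while $\cS^*\setminus(R_1\cup\cdots\cup R_i)\neq\emptyset$, pick $\sigma_{t,i+1}$ in this set and put $R_{i+1}=\cbc{\tau\in\cS(\Phi_t)\setminus(R_1\cup\cdots\cup R_i):\dist(\tau,\sigma_{t,i+1})\leq(a-\eps)\theta n}$; collect any remaining assignments in $R_0$. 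The ball-mass bound applied to every $\sigma_{t,i}\in\cS^*$ yields SH1, and the distance-gap property applied to each $\sigma_{t,i}$, combined with the exclusion $\sigma_{t,j}\notin R_i$ for $j>i$, forces $\dist(\sigma_{t,i},\sigma_{t,j})\geq(a+\eps)\theta n$, from which SH2 follows via the defining inclusion of the $R_j$'s. Finally, $R_0\subseteq\cS(\Phi_t)\setminus\cS^*$ is of negligible size.

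The main technical obstacle lies in the quantitative calibration of $\eps=\eps(k,\rho)$: the strict inequalities in~(\ref{eqpsiShatters}) must be turned into explicit exponential margins that survive simultaneously the union bound over the $O(n)$ distance levels, the $\exp(-\rho n/2^k)$ slack demanded by Theorem~\ref{Cor_SATExchange}, and the target exponents $(a-\eps)\theta n$ for SH1 and $(a+\eps)\theta n$ for SH2. This requires exploiting the precise shape of $\psi$ near its relevant local maximum rather than merely its qualitative strict inequalities, and is where genuine case-analysis (together with the parameter regime of $k,\rho,\theta$) is unavoidable.
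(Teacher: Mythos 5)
Your proposal follows the paper's own proof essentially step for step: first-moment bound $\frac1n\ln\Erw X_\alpha\le\psi(\alpha)$ in the planted model, transfer to $\cU_k(n,m)$ via Theorem~\ref{Cor_SATExchange}, a lower bound on $\abs{\cS(\Phi_t)}$ from Corollary~\ref{Cor_count}, and a greedy construction of the clusters $R_1,\ldots,R_N$ around ``good'' assignments. That matches the paper's Lemma~\ref{Lemma_icy_sat}, Corollary~\ref{Cor_icy_sat}, and the concluding greedy argument.

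One step in your write-up is, as literally stated, not quite right. You argue for {\bf SH2} by first deducing $\dist(\sigma_{t,i},\sigma_{t,j})\ge(a+\eps)\theta n$ for the chosen \emph{centers} and then saying SH2 ``follows via the defining inclusion of the $R_j$'s.'' If you try to convert center separation into pairwise separation using only $R_i\subset B(\sigma_{t,i},(a-\eps)\theta n)$ and $R_j\subset B(\sigma_{t,j},(a-\eps)\theta n)$, the triangle inequality yields $\dist(\sigma,\tau)\ge(a+\eps)\theta n-2(a-\eps)\theta n=(3\eps-a)\theta n$, which is negative for small $\eps$. The correct route (implicit in the paper's ``by construction'') applies the distance-gap property to the \emph{pair} $(\sigma_{t,i},\tau)$ rather than to the two centers: since $\tau\in R_j$ with $j>i$, the construction forces $\tau\notin\cC(\sigma_{t,i})$, hence $\dist(\tau,\sigma_{t,i})>(a-\eps)\theta n$, and because $\sigma_{t,i}$ is good the gap jumps this to $\dist(\tau,\sigma_{t,i})>(a+\eps)\theta n$; combining with $\dist(\sigma,\sigma_{t,i})\le(a-\eps)\theta n$ gives $\dist(\sigma,\tau)\ge2\eps\theta n$, which is what SH2 actually delivers. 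The center-separation detour is a red herring here, and replacing it with the direct argument closes the gap.
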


We will first show how \Prop~\ref{Prop_icy_sat} implies the first part of \Thm~\ref{Thm_shape}.
The proof of \Prop~\ref{Prop_icy_sat} appears at the end of this section.
To derive the first part of \Thm~\ref{Thm_shape} from \Prop~\ref{Prop_icy_sat}
 we need to verify~(\ref{eqpsiShatters}). 

\begin{lemma}\label{Lemma_a}
Assume that
	$0\leq\theta\leq\exp(\rho-2)/(\rho k)$.
Let $a=\exp(2-\rho)$.
Then $\psi(a)<-a\theta/2$.
\end{lemma}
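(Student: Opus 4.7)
The plan is to substitute $a = \exp(2-\rho)$ directly and carry out two small-parameter Taylor expansions: one in $a$ for the entropy piece, and one in $y := a\theta$ for the clause piece. Note that the hypothesis $\theta \leq \exp(\rho-2)/(\rho k)$ is exactly the statement $y = a\theta \leq 1/(\rho k)$, which in turn yields $ky \leq 1/\rho$, so both expansions converge rapidly.

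For the entropy piece, use $\ln a = 2-\rho$ to get $-a\theta \ln a = (\rho - 2)y$ directly, and Taylor expand
$$-(1-a)\theta\ln(1-a) = \theta a - \tfrac{1}{2}\theta a^2 + O(\theta a^3) = y - \tfrac{ya}{2} + O(ya^2),$$
so the two entropy contributions together give $(\rho-1)y - ya/2 + O(ya^2)$. For the clause piece, write $g(y) := 1 - (1-y)^k = ky - \binom{k}{2}y^2 + O(k^3 y^3)$, then $u := g(y)/(2^k-1)$ is tiny, and $\ln(1-u) = -u + O(u^2)$. Multiplying by $r = \rho 2^k/k$ and absorbing the factor $2^k/(2^k-1) = 1 + O(2^{-k})$ yields
$$r\ln(1 - u) = -\rho y + \frac{\rho(k-1)}{2}y^2 + O(\rho k^2 y^3) + O(\rho y/2^k).$$

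Summing the three pieces, the $\rho y$ terms cancel and I obtain
$$\psi(a) = -y + \frac{\rho(k-1)}{2}y^2 - \frac{ya}{2} + O(ya^2) + O(\rho k^2 y^3) + O(\rho y/2^k).$$
The key use of the hypothesis is that $\rho(k-1)y^2/2 \leq (1 - 1/k)\cdot y/2 = y/2 - y/(2k)$, which turns the main expression into $-y/2 - y/(2k)$ plus the small-error terms. Since $-ya/2$ is negative, one only needs to control the genuinely positive errors, giving $\psi(a) \leq -y/2 - y/(2k) + (\text{positive errors})$.

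The main (and only) obstacle is verifying that for $\rho \geq \rho_0$ and $k \geq k_0$ large enough, the remaining positive errors are strictly dominated by the slack $y/(2k)$. Under $y \leq 1/(\rho k)$ one gets $\rho k^2 y^3 \leq y/\rho^2 \cdot k$ type bounds and similarly $\rho y/2^k$ and $ya^2 = y e^{2(2-\rho)}$ are exponentially small, so each positive error is $o(y/k)$ once $\rho_0, k_0$ are chosen large enough. This yields the strict inequality $\psi(a) < -y/2 = -a\theta/2$, completing the lemma.
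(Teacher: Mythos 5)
Your overall plan is the same as the paper's: expand the entropy and clause pieces, observe that the leading terms $(\rho-1)y$ and $-\rho y$ nearly cancel, and use the hypothesis (rewritten as $\rho k y\le 1$, with $y=a\theta$) to control the quadratic term $\tfrac{\rho(k-1)}{2}y^2$. The paper does this via the one-sided elementary inequalities $-(1-\alpha)\ln(1-\alpha)\le\alpha$, $(1-z)^k\le e^{-kz}$ and $e^{-z}\le 1-z+z^2/2$, arriving at $\psi(a)\le a\theta\bigl(1-\ln a-\rho(1-ak\theta/2)\bigr)=-a\theta/2$ with nothing left over.

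The gap is in your final paragraph. The claim that ``each positive error is $o(y/k)$ once $\rho_0,k_0$ are large'' is false. From $y\le 1/(\rho k)$ the best available bound is $\rho k^2 y^3\le y/\rho$ (not $ky/\rho^2$ as you wrote, though that is even weaker), and since the lemma operates in the regime $\rho\le k\ln 2<2k$, one has $y/\rho>y/(2k)$, so this term is \emph{not} dominated by the slack $y/(2k)$. Likewise $ya^2=y\,e^{4-2\rho}$: for $\rho$ a fixed constant and $k\to\infty$ (exactly the regime the paper needs, since $\rho_0$ must be independent of $k$) one has $k\,e^{4-2\rho}\to\infty$, so $ya^2$ is not $o(y/k)$ either. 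Thus the step ``positive errors $<y/(2k)$'' does not follow from the stated bounds.

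What actually saves the computation — and what the $O(\cdot)$ notation hides — is that every single remainder here is sign-favourable: the entropy remainder is $-\theta\sum_{j\ge 3}a^j/(j(j-1))<0$; the binomial remainder $\sum_{j\ge 3}(-1)^j\binom kj y^j$ is nonnegative, which only enlarges $g(y)$ and hence makes $-\rho g(y)/k$ more negative; the factor $2^k/(2^k-1)>1$ also only makes the clause term more negative; and $\ln(1-u)+u<0$. If you replace your $O(\cdot)$ bookkeeping with the exact one-sided inequalities
\[
-(1-a)\ln(1-a)\le a-\tfrac{a^2}{2},\qquad 1-(1-y)^k\ge ky-\tbinom k2 y^2,\qquad \ln(1-u)\le -u,\qquad \tfrac{r}{2^k-1}\ge\tfrac{\rho}{k},
\]
then the error terms disappear altogether and the argument closes cleanly: $\psi(a)\le -y-\tfrac{ya}{2}+\tfrac{\rho(k-1)}{2}y^2\le -\tfrac{y}{2}-\tfrac{ya}{2}<-\tfrac{y}{2}$. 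As written, though, the last paragraph contains a claim that fails precisely where the paper needs the lemma, so the proof is not complete.
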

\begin{proof}
We have
	\begin{eqnarray*}
	\psi(a)&\leq&a\theta(1-\ln a)-\frac{\rho}k\bc{1-(1-a\theta)^k}
			\leq a\theta(1-\ln a)-\frac{\rho}k\bc{1-\exp(-ak\theta)}\\
		&\leq&a\theta(1-\ln a)-\frac{\rho}k\bc{ak\theta-(ak\theta)^2/2}=
			a\theta\brk{1-\ln a-\rho(1-a k\theta/2)},
	\end{eqnarray*}
where we used $\exp(-z)\leq1-z+z^2/2$ for $z\geq0$.
Since $k\theta\rho\leq\exp(\rho-2)$ by assumption,
our choice of $a$ implies that
	$
	\psi(a)\leq
		a\theta\brk{1-\ln a-\rho+a\exp(\rho-2)/2}
			=-a\theta/2,
	$
as claimed.
\qed\end{proof}

\begin{lemma}\label{Lemma_alpha}
Assume that
	$0\leq\theta\leq\exp(\rho-2)/(\rho k)$.
Let $a=\exp(2-\rho)$.
Then $\sup_{\alpha<a}\psi(\alpha)\leq \frac{3}{2\eul^2k\rho}$.
\end{lemma}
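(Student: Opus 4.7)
The plan is to follow the template of \Lem~\ref{Lemma_a} but locate the global maximum of $\psi$ over the entire interval $(0,a)$ rather than evaluating at the endpoint. I will upper-bound $\psi$ by a tractable function $f$ whose critical points are explicit, then confirm that the relevant interior critical point, not the boundary, realises the supremum. Concretely, using $-(1-\alpha)\ln(1-\alpha)\leq\alpha$, the inequalities $\ln(1-y)\leq -y$, $(1-\alpha\theta)^k\leq\exp(-k\alpha\theta)$, and $1-\exp(-z)\geq z-z^2/2$ for $z\geq 0$, together with $r/(2^k-1)\geq\rho/k$ (since $r=\rho 2^k/k$), I obtain
$$\psi(\alpha)\;\leq\;f(\alpha)\;:=\;\alpha\theta(1-\ln\alpha-\rho)+\rho k\alpha^2\theta^2/2.$$

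The equation $f'(\alpha)=0$ reduces to $\ln\alpha=-\rho+\rho k\theta\alpha$. Setting $\beta=\rho k\theta\alpha$, the smaller critical point $\alpha^*$ satisfies $\alpha^*=\exp(\beta^*-\rho)$ with $\beta^*=\rho k\theta\exp(\beta^*-\rho)$. The hypothesis $\rho k\theta\leq\exp(\rho-2)$ gives $\beta^*\exp(-\beta^*)\leq\exp(-2)$, and since $\beta\exp(-\beta)\geq\beta/\eul$ on $[0,1]$, we conclude $\beta^*\leq 1/\eul$, hence $\exp(\beta^*)\leq\exp(1/\eul)<3/2$. Substituting $1-\ln\alpha^*-\rho=1-\beta^*$ gives
$$f(\alpha^*)\;=\;\alpha^*\theta(1-\beta^*/2)\;\leq\;\alpha^*\theta\;=\;\exp(\beta^*-\rho)\theta\;\leq\;\frac{\exp(\beta^*)}{\eul^2\rho k}\;\leq\;\frac{3}{2\eul^2\rho k},$$
where I used $\theta\leq\exp(\rho-2)/(\rho k)$. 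To check that this is the supremum on $(0,a)$, observe that $\ln a=2-\rho$ yields $f(a)=-a\theta+\rho k(a\theta)^2/2$; the hypothesis gives $\rho k a\theta\leq 1$, so $f(a)\leq -a\theta/2<0<f(\alpha^*)$, while $f(\alpha)\to 0$ as $\alpha\to 0^+$. Therefore $\sup_{\alpha<a}\psi\leq\sup_{\alpha<a}f=f(\alpha^*)$, giving the claim.

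The delicate point is extracting the exact prefactor $3/2$: the entire slack comes from the quadratic correction $\exp(\beta^*)$, so one must verify that the smaller root $\beta^*$ of $\beta\exp(-\beta)\leq\exp(-2)$ obeys $\exp(\beta^*)<3/2$. The cleanest route is the uniform lower bound $\beta\exp(-\beta)\geq\beta/\eul$ on $[0,1]$, which yields $\beta^*\leq 1/\eul$ directly; a coarser substitution such as $\beta^*\leq 2\exp(-2)$ would also work but with more arithmetic. A secondary concern is that the upper critical point of $f$ might lie in $(0,a)$, but the explicit sign computation $f(a)<0$ bypasses this: $\alpha^*$ is a local maximum, followed by a local minimum, and any subsequent increase of $f$ is bounded above by $f(a)<0<f(\alpha^*)$.
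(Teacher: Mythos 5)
Your proof is correct, and it reaches the paper's bound by a genuinely different route. You both begin with the same reduction $\psi(\alpha)\leq f(\alpha)=\alpha\theta(1-\ln\alpha-\rho)+\rho k\alpha^2\theta^2/2$ (the paper calls this $\psi_1$), but from there the strategies diverge. The paper does not solve $f'=0$: it observes that $f''<0$ on $(0,a)$, brackets the unique maximizer between $\alpha_0=\exp(-\rho)$ and $\alpha_1=\exp(1-\rho)$ by checking signs of $f'$ at those points, and then bounds the linear-logarithmic part $\alpha-\alpha\ln\alpha-\alpha\rho$ at its maximizer $\alpha_0$ (value $\exp(-\rho)$) and the monotone quadratic piece at the right end $\alpha_1$ (value $\leq\exp(-\rho)/2$ after using $\rho k\theta\leq\exp(\rho-2)$), summing to $3\theta\exp(-\rho)/2$. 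You instead solve $f'(\alpha^*)=0$ exactly via the substitution $\beta=\rho k\theta\alpha$, obtaining the fixed-point relation $\beta^*\exp(-\beta^*)=\rho k\theta\exp(-\rho)\leq\exp(-2)$, bound the relevant root by $\beta^*\leq 1/\eul$, and plug back to get the closed-form $f(\alpha^*)=\alpha^*\theta(1-\beta^*/2)$. Your route makes the origin of the prefactor $3/2$ transparent (it is $\exp(\beta^*)<\exp(1/\eul)$ together with one discard of $1-\beta^*/2\leq1$), whereas the paper's makes the concavity structure do the work and avoids any Lambert-$W$-type equation. Your terminal check $f(a)\leq-a\theta/2<0$ is redundant given the concavity of $f$ on $(0,a)$ (which you effectively have, since $\rho k\theta\alpha<1$ there), but it costs nothing and correctly rules out the outer critical point. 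Both arguments are elementary and produce the identical constant $\tfrac{3}{2\eul^2 k\rho}$.
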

\begin{proof}
Let $0\leq\alpha< a$.
We have
	\begin{eqnarray*}
	\psi(\alpha)&\leq&
		\theta(\alpha-\alpha\ln\alpha-\alpha\rho(1-\alpha k\theta/2)).
	\end{eqnarray*}
Let $\psi_1(\alpha)$ be the expression on the r.h.s.
Then
	\begin{eqnarray*}
	\frac d{d\alpha}\psi_1(\alpha)=\theta\brk{-\ln\alpha-\rho+\alpha k\theta},&&
	\frac{d^2}{d\alpha^2}\psi_1(\alpha)=\theta\brk{k\theta-1/\alpha}.
	\end{eqnarray*}
Thus, our assumption on $\theta$ implies that $\frac{d^2}{d\alpha^2}\psi_1(\alpha)<0$ for all $0<\alpha<a$, and therefore
$\psi_1$ has a unique local maximum in the interval $(0,\alpha)$.
To pinpoint this maximum, note that for $\alpha_0=\exp(-\rho)$ the first derivative $\frac d{d\alpha}\psi_1(\alpha_0)$ is positive.
Moreover, at $\alpha_1=\exp(1-\rho)$ we have $\frac d{d\alpha}\psi_1(\alpha_1)<0$.
Hence, the unique local maximum of $\psi_1$ lies in the interval $(\alpha_0,\alpha_1)$.
To study the maximum value, consider the function
	$\psi_2:\alpha\mapsto\alpha-\alpha\ln\alpha-\alpha\rho$.
Its derivative is $d/d\alpha\,\psi_2(\alpha)=\rho-\ln\alpha$, so that the maximum of this function occurs at $\alpha_0$.
Furthermore, the quadratic term $\alpha\mapsto\alpha^2k/2$ is monotonically increasing in $\alpha$.
Therefore,
	\begin{eqnarray*}
	\sup_{0<\alpha<a}\psi(\alpha)&\leq&\sup_{0<\alpha<a}\psi_1(\alpha)
		=\sup_{\alpha_0<\alpha<\alpha_1}\psi_1(\alpha)
		\leq\theta(\psi_2(\alpha_0)+\alpha_1^2k/2)
		=3\theta\exp(-\rho)/2.
	\end{eqnarray*}
Finally, the assertion follows from the assumed bound on $\theta$.
\qed\end{proof}

\begin{proof}[\Thm~\ref{Thm_shape}, part~2]
Assume that 
$\rho\leq k\ln 2-\ln k$ and
	$$\frac{\rho}{k\ln2}(1+\rho^{-2}+2^{2-k})\leq\theta\leq\exp(\rho-2)/(\rho k).$$
Let $a=\exp(2-\rho)$.
\Lem~\ref{Lemma_a} shows that
	\begin{eqnarray*}
	\psi(a)+\rho/2^k&\leq&\rho/2^k-\exp(2-\rho)\theta/2
		\leq \rho/2^k-\frac{\exp(2-\rho)\rho}{k\ln2}
			=\frac{\rho}{2^k}\bc{1-\frac{2^k\exp(2-\rho)}{k\ln2}}.
	\end{eqnarray*}
Since $\rho\leq k\ln2-\ln k$, the r.h.s.\ is negative.
By \Lem~\ref{Lemma_alpha} we have 
	\begin{eqnarray}\nonumber
	\theta\ln2+\frac{2^k\rho}{k}\ln(1-2^{-k})-\rho/2^k
		&\geq&\theta\ln2-\frac{\rho}{k}-\rho/2^{k-1}\\
		&\geq&\frac{1}{k\rho}+2^{2-k}\rho\ln2-\rho/2^{k-1}\geq\frac{1}{k\rho}>\sup_{\alpha<a}\psi(\alpha).
			\label{eqClusterEntropy}
	\end{eqnarray}
Thus, the assertion follows from \Prop~\ref{Prop_icy_sat}.
\qed\end{proof}

\subsubsection{Proof of \Prop~\ref{Prop_icy_sat}.}\label{Sec_icy_sat}

In the rest of this section we keep the notation and the assumptions from \Prop~\ref{Prop_icy_sat}.
Let 
	$$b=\theta\ln2+2^k\rho\ln(1-2^{-k})/k-\rho/2^k.$$

\begin{lemma}\label{Lemma_icy_sat}
There exist numbers $\xi>0$, $0<a_1<a_2<1$ such that 
a pair $(\Phi,\sigma)$ chosen from the distribution $\mathcal{P}_k\bc{n,m}$ has the following two
properties with probability at least $1-\exp(-(\xi+\rho/2^k)n)$.
\begin{enumerate}
\item $\Phi_{t,\sigma}$ does not have a satisfying assignment $\tau$ with $a_1n<\dist(\sigma,\tau)<a_2n$.
\item $|\{\tau\in \cS(\Phi_{t,\sigma}):\dist(\sigma,\tau)<a_2n\}|\leq\exp((b-\xi)n)$.
\end{enumerate}
\end{lemma}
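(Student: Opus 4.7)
My plan is a first moment calculation in the planted model $\cP_k\bc{n,m}$. Assume without loss of generality that $\sigma = \vecone$, and for each $0 \leq \alpha \leq 1$ let $X_\alpha$ count the $\tau \in \cS(\Phi_{t,\sigma})$ with $\dist(\sigma,\tau) = \alpha\theta n$, equivalently the extensions $\tilde\tau$ of $\sigma|_{\cbc{x_1,\ldots,x_t}}$ to a satisfying assignment of $\Phi$ that flip exactly $\alpha\theta n$ of the $\theta n$ free variables. A direct inclusion--exclusion argument shows that precisely $(2^k-2)\bink nk + \bink{n-\alpha\theta n}k$ of the $2^k\bink nk$ possible $k$-clauses are satisfied by both $\sigma$ and $\tilde\tau$, out of $(2^k-1)\bink nk$ that satisfy $\sigma$; since the $m$ clauses of $\Phi$ are drawn independently and uniformly from the latter,
$$
\Erw\brk{X_\alpha} = \bink{\theta n}{\alpha\theta n}\bc{1 - \frac{1 - (1-\alpha\theta)^k}{2^k-1}}^m,
$$
and Stirling's formula yields $n^{-1}\ln\Erw\brk{X_\alpha} \leq \psi(\alpha) + o(1)$ uniformly in $\alpha$.

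Next I would extract the constants. By (\ref{eqpsiShatters}) the quantities $\eta_1 = -\rho/2^k - \psi(a)$ and $\eta_2 = b - \sup_{0<\alpha<a}\psi(\alpha)$ are strictly positive, and continuity of $\psi$ provides $\delta > 0$ with $\psi(\alpha) + \rho/2^k < -\eta_1/2$ on $\brk{a-\delta,a+\delta}$ and $\sup_{\alpha \leq a-\delta}\psi(\alpha) < b - \eta_2/2$. Set $a_1 = a - \delta$, $a_2 = a + \delta$, and fix $\xi > 0$ small enough that $3\xi < \eta_1/2$ and $3\xi + \rho/2^k < \eta_2/2$ (the second condition is a modest requirement, amply met in the target applications, since the gap $\eta_2$ is of order $1/(k\rho)$ while $\rho/2^k$ is exponentially small in $k$).

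For part~1, Markov's inequality bounds the expected number of $\tau$ with $a_1 n < \dist(\sigma,\tau) < a_2 n$ by $(\theta n + 1)\exp((-\rho/2^k - \eta_1/2)n + o(n)) \leq \exp(-(\rho/2^k + \xi)n)$, so no such $\tau$ exists with the required probability. For part~2, splitting $\brk{0,a_2} = \brk{0,a_1}\cup(a_1,a_2]$ and using the two bounds on $\psi$ yields $\Erw\brk{\sum_{\alpha \leq a_2}X_\alpha} \leq (\theta n + 1)\exp((b - \eta_2/2)n + o(n))$, and Markov with threshold $\exp((b - \xi)n)$ caps the failure probability by $\exp((\xi - \eta_2/2)n + o(n)) \leq \exp(-(\xi + \rho/2^k)n)$.

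The main obstacle is not conceptual but bookkeeping: arranging a single $\xi > 0$ that simultaneously opens a forbidden annulus around $a$, controls the total count inside $a_2$, and exceeds $\rho/2^k$ by enough margin to match the probability bound required by the subsequent application of \Thm~\ref{XCor_SATExchange} in the proof of \Prop~\ref{Prop_icy_sat}. The underlying expectation is a standard inclusion--exclusion count, and both parts then reduce to Markov's inequality combined with a union bound over the at most $\theta n + 1$ integer values of $\alpha\theta n$.
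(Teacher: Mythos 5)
Your approach is essentially the same as the paper's: condition on $\sigma=\vecone$, compute the first moment of $X_\alpha$ in the planted model, bound $n^{-1}\ln\Erw X_\alpha$ by $\psi(\alpha)$, and then apply Markov's inequality with a union bound over the $O(n)$ integer values of $\alpha\theta n$, splitting the range at $a_1,a_2$. The derivation of the expectation is fine (the paper writes $\leq$ rather than $=$ because $\bink{n-\alpha\theta n}{k}/\bink{n}{k}\leq(1-\alpha\theta)^k$, but this is immaterial at the level of $\psi$).

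There is, however, a small and avoidable slip in the bookkeeping, which you flag but then dismiss. You require $3\xi+\rho/2^k<\eta_2/2$, which has a positive solution for $\xi$ only when $\eta_2>2\rho/2^k$; but the hypothesis~(\ref{eqpsiShatters}) only guarantees $\eta_2>\rho/2^k$, so your argument as written does not establish the lemma under its stated assumptions. The loss comes from writing $\sup_{\alpha\leq a-\delta}\psi(\alpha)<b-\eta_2/2$, when in fact for any $\delta>0$ one has the sharper $\sup_{\alpha\leq a-\delta}\psi(\alpha)\leq\sup_{0<\alpha<a}\psi(\alpha)=b-\eta_2$ automatically, with no continuity argument needed. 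Using $b-\eta_2$ instead of $b-\eta_2/2$, Markov with threshold $\exp((b-\xi)n)$ gives failure probability $\exp((\xi-\eta_2+o(1))n)$, and the constraint $2\xi<\eta_2-\rho/2^k$ has a positive solution whenever $\eta_2>\rho/2^k$, exactly matching the hypothesis. This is in effect what the paper does: it directly chooses $\xi_2>0$ with $\sup_{0<\alpha\leq a_2}\psi(\alpha)<b-\rho/2^k-3\xi_2$, which exists because $\sup_{0<\alpha\leq a_2}\psi(\alpha)<b-\rho/2^k$ follows from the two clauses of~(\ref{eqpsiShatters}). So your plan is sound; you just should not give away the unnecessary factor of two in the supremum bound.
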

\begin{proof}
For $\alpha>0$ we let
	$X_\alpha=\abs{\cbc{\tau\in\cS_{t,\sigma}(F):\dist(\sigma,\tau)=\alpha\theta n}}.$
Note that
	\begin{eqnarray}\label{eqicy1}
	\Erw X_\alpha&\leq&\bink{\theta n}{\alpha\theta  n}\bc{1-\frac{1-(1-\alpha\theta)^k}{2^k-1}}^m,
		\label{eqicy2}
	\end{eqnarray}
Taking logarithms and bounding the binomial coefficient via Stirling's formula, we obtain
	\begin{eqnarray}\label{eqicy3}
	\frac{\ln\Erw X_\alpha}n&\leq&\psi(\alpha).
		\label{eqicy4}
	\end{eqnarray}

Let $a\in(0,1)$ be such that $\psi(a)+\rho/2^k<0$ (cf.~(\ref{eqpsiShatters})).
As $\psi$ is continuous there exist $0<a_1<a<a_2<1$ and $\xi_1>0$ such that
	\begin{equation}\label{eqicy5}
	\sup_{a_1\leq\alpha\leq a_2}\psi(\alpha)<-\rho/2^k-2\xi_1.
	\end{equation}
Combining~(\ref{eqicy4}) and~(\ref{eqicy5}), we conclude that
	$\Erw X_{\alpha}\leq
		\exp\brk{-n(\rho/2^k+2\xi_0)}$
for all $a_1\leq\alpha\leq a_2$.
Summing over integers $a_1n\leq j\leq a_2n$, we see that for large $n$
	\begin{eqnarray*}
	\sum_{a_1n\leq j\leq a_2n}\Erw X_{j/n,\xi_1}&\leq&
		n\exp\brk{-n(\rho/2^k+2\xi_0)}\leq\exp\brk{-n(\rho/2^k+\xi_0)}.
	\end{eqnarray*}
Hence, by Markov's inequality the probability that there is a satisfying assignment $\tau$ that coincides with $\sigma$
on the first $t$ variables such that
$a_1n\leq\dist(\sigma,\tau)\leq a_2n$ is bounded by $\exp(-n(\rho/2^k+\xi_0))$.
This proves the first assertion.

Since we are assuming that $\sup_{0<\alpha<a}\psi(\alpha)<b-\rho/2^k$, 
and as~(\ref{eqicy5}) shows that
	$\psi(\alpha)<-\rho/2^k-2\xi_1<b-\rho/2^k-2\xi_1$ for all $a\leq \alpha<a_2$,
there is a number $\xi_2>0$ such that
	$$\sup_{0<\alpha\leq a_2}\psi(\alpha)<b-\rho/2^k-3\xi_2.$$
Hence, (\ref{eqicy3}) implies that
	$$\Erw X_\alpha\leq\exp(n\psi(\alpha))\leq
		\exp(n(b-\rho/2^k-3\xi_2))\qquad\mbox{for all $0<\alpha\leq a_2$}.$$
Taking the sum over integers $0\leq j\leq a_2n$, we get for large enough $n$
	$$\sum_{0\leq j\leq a_2n}\Erw X_{j/n}\leq
			n\exp(n(b-\rho/2^k-3\xi_2))\leq\exp(n(b-\rho/2^k-2\xi_2)).$$
That is, the expected number of assignments $\tau\in\cS(\Phi_{t,\sigma})$ such that $\dist(\sigma,\tau)\leq a_2n$
is bounded by $\exp(n(b-\rho/2^k-2\xi_2))$.
Hence, Markov's inequality entails that with probability at least $1-\exp(-n(\rho/2^k+\xi_2))$ there
are at most $\exp(n(b-\xi_2))$ such satisfying assignments $\tau$.
This proves the second assertion.
\qed\end{proof}

\begin{corollary}\label{Cor_icy_sat}
There exist numbers $\xi>0$, $0<a_1<a_2<1$ such that 
a pair $(\Phi,\sigma)$ chosen from the distribution $\mathcal{U}_k\bc{n,m}$ enjoys the two
properties stated in Lemma~\ref{Lemma_icy_sat} with probability at least $1-\exp(-\xi n)$.
\end{corollary}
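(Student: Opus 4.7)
The plan is to transfer the planted-model bound of Lemma~\ref{Lemma_icy_sat} to the uniform distribution $\cU_k(n,m)$ via a quantitative change-of-measure argument. Let $\cE\subset\Lambda_k(n,m)$ denote the event that a pair $(F,\sigma)$ enjoys both properties of Lemma~\ref{Lemma_icy_sat}; that lemma gives $\pr_{\cP_k(n,m)}[\neg\cE]\le\exp(-(\xi+\rho/2^k)n)$. Applying Theorem~\ref{Cor_SATExchange} directly would only yield $\pr_{\cU_k(n,m)}[\cE]=1-o(1)$, so the task is to strengthen this qualitative conclusion to the exponential tail $1-\exp(-\xi n)$ claimed in the corollary.

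To do so, I would use the identity (up to the $1-o(1)$ normalising factor $\pr_{\cU_k}[\PHI\text{ is satisfiable}]^{-1}$)
\[
\frac{d\cU_k(n,m)}{d\cP_k(n,m)}(F,\sigma)=\frac{\Erw_{\cU_k}|\cS(\PHI)|}{|\cS(F)|}.
\]
Since $\Erw_{\cP_k}[d\cU_k/d\cP_k]=1+o(1)$, Markov's inequality gives $\pr_{\cP_k}[d\cU_k/d\cP_k\ge\exp(\eta n)]\le 2\exp(-\eta n)$ for any $\eta>0$ and large $n$. Splitting $\pr_{\cU_k}[\neg\cE]$ along the event $\{d\cU_k/d\cP_k<\exp(\eta n)\}$ and its complement, I obtain
\[
\pr_{\cU_k}[\neg\cE]\le\exp(\eta n)\pr_{\cP_k}[\neg\cE]+\pr_{\cU_k}\bigl[|\cS(F)|\le\Erw|\cS|\exp(-\eta n)(1+o(1))\bigr].
\]
For any $0<\eta<\rho/2^k$ the first summand is bounded by $\exp(-\xi n)$. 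The second summand is exponentially small by Azuma--Hoeffding applied to the clause-exposure martingale: each additional clause changes $\ln|\cS|$ by at most $\ln(1-2^{-k})^{-1}=O(1)$, so $\ln|\cS(\PHI)|$ concentrates around its mean with probability $1-\exp(-\Omega(n))$. Choosing $\eta$ small enough and absorbing an $o(n)$ correction into a marginally smaller $\xi'<\xi$ produces the stated bound; the strict inequalities~(\ref{eqpsiShatters}) used to select $\xi$ in the proof of Lemma~\ref{Lemma_icy_sat} leave ample room for this adjustment, and the parameters $a_1,a_2$ transfer unchanged.

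The main obstacle is the exponential lower tail $\pr_{\cU_k}[|\cS(\PHI)|\le\Erw|\cS(\PHI)|\exp(-\eta n)]\le\exp(-\Omega(n))$: Theorem~\ref{Thm_count} as stated provides only a ``with high probability'' lower bound, so one must strengthen it either by the Azuma--Hoeffding argument above, or by refining the second-moment computation of~\cite{SolSpace} (which pins $\Erw|\cS(\PHI)|^2/(\Erw|\cS(\PHI)|)^2$ to a bounded constant). Both refinements are standard and do not require new conceptual input beyond careful bookkeeping of the $o(n)$ corrections introduced by the change of measure.
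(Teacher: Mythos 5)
The paper's own proof of this corollary is a one-liner: invoke \Thm~\ref{Cor_SATExchange} (the planted-to-uniform transfer theorem) applied to the event $\cE$ of Lemma~\ref{Lemma_icy_sat}. Since Lemma~\ref{Lemma_icy_sat} gives $\pr_{\cP_k}[\neg\cE]\leq\exp(-(\xi+\rho/2^k)n)\leq\exp(-\rho n/2^k)$, the transfer theorem yields $\pr_{\cU_k}[\cE]=1-o(1)$. That is the entire argument. You have noticed, correctly, that this only gives $1-o(1)$ and not the exponential tail $1-\exp(-\xi n)$ stated in the corollary; but the downstream use of the corollary in the proof of \Prop~\ref{Prop_icy_sat} only invokes the conclusion ``\whp'' (see the sentence ``Corollary~\ref{Cor_icy_sat} and our choice of $b$ ensure that $F$ has the following two properties \whp''), so the $\exp(-\xi n)$ in the corollary statement is an overclaim that the paper neither proves nor uses. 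You should have flagged this and given the one-line transfer argument for the weaker, sufficient conclusion.

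Instead, you attempt to manufacture the exponential tail, and the key step you lean on is unsound. You claim the clause-exposure martingale for $\ln\abs{\cS(\PHI)}$ has $O(1)$ differences because ``each additional clause changes $\ln\abs{\cS}$ by at most $\ln(1-2^{-k})^{-1}$.'' That quantity is only the \emph{average} multiplicative effect of a uniformly random clause; for Azuma--Hoeffding you need a worst-case bound over all choices of the exposed clause, and there is none: a single clause can eliminate an arbitrarily large fraction of the remaining satisfying assignments (in the extreme, produce unsatisfiability), so $\ln\abs{\cS}$ has unbounded downward jumps. Azuma therefore does not apply, and the alternative you gesture at --- a second-moment computation pinning $\Erw\abs{\cS}^2/(\Erw\abs{\cS})^2$ to a constant --- yields only a Paley--Zygmund lower bound with constant (not $1-\exp(-\Omega(n))$) success probability. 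Neither route gives the exponential lower tail for $\ln\abs{\cS(\PHI)}$ that your change-of-measure split requires, and that tail is in fact not established anywhere in the paper (Theorem~\ref{Thm_count} is a \whp\ statement only). The characterisation of these gaps as ``standard bookkeeping'' is where your proposal breaks down.
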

\begin{proof}
This follows directly from Lemma~\ref{Lemma_icy_sat} and \Cor~\ref{Cor_SATExchange}.
\qed\end{proof}

\begin{proof}[Proposition~\ref{Prop_icy_sat}]
Let $\xi,a_1,a_2$ be the numbers provided by Corollary~\ref{Cor_icy_sat} and
let $(\Phi,\sigma)$ be a pair chosen from the distribution $\cU_k(n,m)$.
With each assignment $\tau\in \cS(\Phi_{t,\sigma})$ we associate a set
	$$\mathcal{C}(\tau)=\{\chi\in \cS(\Phi_{t,\sigma}):\dist(\chi,\tau)\leq a_1 n\}.$$
Moreover, we call $\tau\in\cS(\Phi_{t,\sigma})$ \emph{good} if
	$\abs{\mathcal{C}(\tau)}\leq\exp((b-\xi)n)$
and there is no $\chi\in\cS(\Phi_{t,\sigma})$ such that $a_1n\leq\dist(\chi,\tau)\leq a_2n$. 
Let $\cS_{good}$ be the set of all good $\tau\in \cS(\Phi_{t,\sigma})$ and $\cS_{bad}=\cS(\Phi_{t,\sigma})\setminus\cS_{good}$.
Corollary~\ref{Cor_icy_sat} and our choice of $b$ ensure that $F$ has the following two properties \whp:
	\begin{eqnarray}\label{eqIcySumm1}
	\abs{\cS(\Phi_{t,\sigma})}&\geq&2^{t}\exp(bn),\\
	\abs{\cS_{good}}&\geq&(1-\exp(-\xi n))\cdot\abs{\cS(\Phi_{t,\sigma})}.
		\label{eqIcySumm2}
	\end{eqnarray}

Assuming that~(\ref{eqIcySumm1}) and~(\ref{eqIcySumm2})
hold and that $n$ is sufficiently large, we are going to construct a decomposition of
$\cS(\Phi_{t,\sigma})$ into subsets as required by {\bf SH1}--{\bf SH2}.
To this end, choose some $\sigma_1\in\cS_{good}$.
Having defined $\sigma_1,\ldots,\sigma_l$, we choose an arbitrary $\sigma_{l+1}\in\cS_{good}\setminus\bigcup_{j=1}^l\cC(\sigma_j)$,
unless this set is empty, in which case we stop.
Let $\sigma_1,\ldots,\sigma_N$ be the resulting sequence and define
	$$R_l=\cC(\sigma_l)\setminus\bigcup_{j=1}^{l-1}\cC(\sigma_j)\qquad\mbox{for }1\leq l\leq N,
		\quad\mbox{and }R_0=\cS(\Phi_{t,\sigma})\setminus\bigcup_{l=1}^NR_l.$$
Then $\cS(\Phi_{t,\sigma})= R_0\cup\cdots\cup R_N$.
(Observe that possibly $R_0=\emptyset$ while $R_l\not=\emptyset$ for all $1\leq l\leq N$ as $\sigma_l\in R_l$.)
Furthermore, for each $1\leq l\leq N$ we have
	$R_l\subset\cC(\sigma_l)$ and thus
	\begin{eqnarray}\nonumber
	|R_l|&\leq&\abs{\cC(\sigma_l)}\leq\exp((b-\xi)n)\qquad\mbox{[because $\sigma_l$ is good]}\\
		&\leq&\abs{\cS(\Phi_{t,\sigma})}\cdot\exp(-\xi n)\qquad\qquad\qquad\mbox{[by~(\ref{eqIcySumm1})].}
		\label{eqIcySumm3}
	\end{eqnarray}
Furthermore, as $R_0\subset\cS_{bad}$, (\ref{eqIcySumm2}) implies
	\begin{equation}		\label{eqIcySumm4}
	\abs{R_0}\leq\abs{\cS_{bad}}\leq\exp(-\xi n))\cdot\abs{\cS(\Phi_{t,\sigma})}.
	\end{equation}
Combining (\ref{eqIcySumm3}) and~(\ref{eqIcySumm4}) we see that the decomposition
$R_0,\ldots,R_N$ satisfies {\bf SH1}.
Furthermore, {\bf SH2} is satisfied by construction.
\qed\end{proof}

\subsection{Condensation}\label{Sec_cond}

Here we prove the second part of \Thm~\ref{Thm_shape}.
The following proposition reduces that task to a problem in calculus.

\begin{proposition}\label{Prop_cond}
Let $k\geq3$ and $r>0$ be fixed.
Let  $0<\theta\leq1$ and let
	$$\psi:(0,1)\rightarrow\RR,\quad\alpha\mapsto-\alpha\theta\ln\alpha-(1-\alpha)\theta\ln(1-\alpha)+
			r\ln\bc{1-\frac{1-(1-\alpha\theta)^k}{2^k-1}}.$$
If there is a number $a\in(0,1)$  such that
	\begin{equation}\label{eqpsi}
	\sup_{a<\alpha\leq1}\psi(\alpha)+\rho/2^k<0
	\end{equation}
then $\PHI_t$ is $2a\theta$-condensed. 
\end{proposition}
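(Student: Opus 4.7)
My plan follows the template of \Sec~\ref{Sec_icy_sat}: carry out the first-moment calculation in the planted model $\cP_k(n,m)$, invoke \Thm~\ref{XCor_SATExchange} to transfer the resulting bound to $\cU_k(n,m)$, and then use a short triangle-inequality argument to convert concentration of $\cS(\PHI_t)$ around the planted assignment into a genuine diameter bound. Note first that the strict inequality in~(\ref{eqpsi}) immediately supplies some $\xi>0$ with $\sup_{a<\alpha\leq 1}\psi(\alpha)\leq-\rho/2^k-2\xi$.

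For a pair $(\Phi,\sigma)$ drawn from $\cP_k(n,m)$, let $X_\alpha$ be the number of $\tau\in\cS(\Phi_{t,\sigma})$ with $\dist(\sigma,\tau)=\alpha\theta n$. The computation already performed in~(\ref{eqicy1})--(\ref{eqicy3}) delivers $\Erw X_\alpha\leq\exp(n\psi(\alpha))$, so summing over the at most $n$ admissible integer values $j=\alpha\theta n$ with $\alpha>a$ and applying Markov's inequality yields
$$\pr_{\cP_k(n,m)}\brk{\exists\tau\in\cS(\Phi_{t,\sigma}):\dist(\sigma,\tau)>a\theta n}\leq n\exp(-n(\rho/2^k+2\xi))\leq\exp(-\rho n/2^k)$$
for all sufficiently large $n$. \Thm~\ref{XCor_SATExchange} then transports this estimate to $\cU_k(n,m)$, so in the joint law of {\bf U1}--{\bf U4} the event
$$\cA=\cbc{\dist(\SIGMA_t,\tau)\leq a\theta n\mbox{ for every }\tau\in\cS(\PHI_t)}$$
holds \whp

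Finally I upgrade this one-sided control into a two-sided diameter bound. Suppose for contradiction that a particular formula $\Phi_t$ has diameter strictly larger than $2a\theta n$, witnessed by $\sigma^*,\tau^*\in\cS(\Phi_t)$ with $\dist(\sigma^*,\tau^*)>2a\theta n$. Then by the triangle inequality, for \emph{every} $\chi\in\cS(\Phi_t)$ at least one of $\dist(\chi,\sigma^*),\dist(\chi,\tau^*)$ exceeds $a\theta n$. In particular, \emph{no matter} which $\SIGMA_t\in\cS(\PHI_t)$ step {\bf U4} produces, the event $\cA$ is violated. Hence
$$\pr\brk{\mathrm{diam}(\cS(\PHI_t))>2a\theta n}\leq\pr\brk{\neg\cA}=o(1),$$
which is exactly the claim that $\PHI_t$ is $2a\theta$-condensed \whp

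The first-moment and transfer steps are essentially copies of \Lem~\ref{Lemma_icy_sat} (the extra $\rho/2^k$ slack demanded by \Thm~\ref{XCor_SATExchange} is absorbed by the gap $\xi$), so the only conceptual point is the last one. What I want to highlight is that $\SIGMA_t$ is \emph{not} uniform on $\cS(\PHI_t)$: its distribution is weighted by the number of extensions of a partial assignment to a satisfying assignment of $\PHI$. The triangle-inequality trick avoids having to reason about this conditional law, since it applies to \emph{any} element of $\cS(\PHI_t)$; thus the only fact about $\SIGMA_t$ I actually use is that it is supported in $\cS(\PHI_t)$, which is automatic.
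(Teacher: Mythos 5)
Your proof is correct and follows essentially the same route as the paper: a first-moment bound on $X_\alpha$ in the planted model $\cP_k(n,m)$, Markov's inequality plus a union bound over the $O(n)$ admissible distances to get a failure probability below $\exp(-\rho n/2^k)$, and a transfer to $\cU_k(n,m)$ via \Thm~\ref{XCor_SATExchange}. The paper leaves the final triangle-inequality step (from ``all of $\cS(\PHI_t)$ lies within $a\theta n$ of $\SIGMA_t$'' to ``diameter at most $2a\theta n$'') implicit; you spell it out, which is a nice clarification.

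One small aside is off, though not load-bearing: you assert that $\SIGMA_t$ is \emph{not} uniform on $\cS(\PHI_t)$. In fact it is. By Fact~\ref{XFact_UD} the law of $(\PHI_t,\SIGMA_t)$ under {\bf U1}--{\bf U4} agrees with that under {\bf D1}--{\bf D4}, and step {\bf D3} draws $\SIGMA_t$ uniformly from $\cS(\PHI_t)$; equivalently, under {\bf U1}--{\bf U4}, conditioning $\SIGMA$ uniform on $\cS(\PHI)$ on its first $t$ coordinates yields a uniform element of the set of extensions, which is in canonical bijection with $\cS(\PHI_t)$. The non-uniform object is the \emph{planted} assignment $\SIGMA'_t$ under {\bf P1}--{\bf P4} --- that is precisely the discrepancy \Thm~\ref{XCor_SATExchange} is designed to bridge. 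As you note, your triangle-inequality step only needs $\SIGMA_t\in\cS(\PHI_t)$, so the argument stands regardless.
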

\begin{proof}
Let $(\Phi,\sigma)$ be a pair chosen from the planted distribution $\cP_k(n,m)$.
For $\alpha>0$ we let
	$$X_\alpha=\abs{\cbc{\tau\in\cS(\Phi_{t,\sigma}):\dist(\sigma,\tau)=\alpha\theta n}}.$$
Then
	$\Erw X_\alpha\leq\bink{\theta n}{\alpha\theta  n}\brk{1-\frac{1-(1-\alpha\theta)^k}{2^k-1}}^m$
and  taking logarithms we obtain
	$\frac1n{\ln\Erw X_\alpha}\leq\psi(\alpha).$
Hence, $\frac1n\ln\Erw X_\alpha<-\rho/2^k$ for $\alpha>a$ by~(\ref{eqpsi}).
Thus, by Markov's inequality we have
	\begin{eqnarray*}
	\pr\brk{\exists \tau\in\cS_t(\Phi_{t,\sigma}):d(\sigma,\tau)\geq a\theta n}
		\leq \theta n\cdot\exp(-(\Omega(1)+\rho/2^k)n)<\exp(-\rho n/2^k).
	\end{eqnarray*}
Therefore, the assertion follows from \Cor~\ref{Cor_SATExchange}.
\qed\end{proof}

\begin{lemma}\label{Lemma_cond}
Suppose that $\rho\leq k\ln2-2\ln k$ and $\theta=(1-1/\rho^2)\frac{\rho}{k\ln2}$.
Moreover, assume that $\rho\geq \rho_0$ and $k\geq k_0$ for certain constants $\rho_0,k_0$.
Let $a=\exp(2-\rho)$.
Then~(\ref{eqpsi}) is satisfied.
\end{lemma}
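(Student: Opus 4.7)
\emph{Plan.}
The plan is to first reduce to a cleaner function by applying the elementary inequality $\ln(1-x)\leq-x$ to the logarithmic term of $\psi$, yielding the uniform upper bound
$$
\psi(\alpha)\leq\bar\psi(\alpha):=\theta h(\alpha)-\frac{\rho(1-(1-\alpha\theta)^k)}{k},
$$
where $h(\alpha):=-\alpha\ln\alpha-(1-\alpha)\ln(1-\alpha)$. The task becomes to show $\bar\psi(\alpha)+\rho/2^k<0$ on $(a,1]$, with the slack $\rho/2^k$ being very small since the hypothesis $\rho\leq k\ln 2-2\ln k$ forces $2^k\geq k^2\exp(\rho)$. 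I would split $(a,1]$ at the threshold $\alpha_1:=3\ln\rho/\rho$, chosen so that $k\alpha_1\theta=3\ln\rho(1-\rho^{-2})/\ln 2\geq 4\ln\rho$ for $\rho\geq\rho_0$.

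On the upper piece $\alpha\in[\alpha_1,1]$, the crude bound $h(\alpha)\leq\ln 2$ together with $\theta\ln 2=(1-\rho^{-2})\rho/k$ gives $\bar\psi(\alpha)\leq(\rho/k)[(1-\alpha\theta)^k-\rho^{-2}]$, and $(1-\alpha\theta)^k\leq\exp(-k\alpha_1\theta)\leq\rho^{-4}$ then yields $\bar\psi(\alpha)\leq-1/(2k\rho)$, which dominates $\rho/2^k$ via $2^k\geq k^2\exp(\rho)$. On the lower piece $\alpha\in(a,\alpha_1]$, I would use the sharper bound $h(\alpha)\leq\alpha(1-\ln\alpha)$ (from $-(1-\alpha)\ln(1-\alpha)\leq\alpha$) and $(1-\alpha\theta)^k\leq\exp(-k\alpha\theta)$. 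Substituting $u:=k\alpha\theta$ and using $\ln(k\theta)=\ln\rho-\ln\ln 2+O(\rho^{-2})\leq\ln\rho+1/2$, this reduces to bounding
$$
G(u):=u(2+\ln\rho-\ln u)-\rho(1-\exp(-u))
$$
from above on $u\in(u_0,u_1]$ with $u_0:=ka\theta\sim\exp(2-\rho)\rho/\ln 2$ and $u_1:=k\alpha_1\theta\leq 5\ln\rho$.

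A calculus analysis of $G''(u)=-1/u+\rho\exp(-u)$ shows that $G''>0$ exactly on $(u_a,u_b)$, where $u_a\sim 1/\rho$ and $u_b\sim\ln\rho$ are the two roots of $u\exp(-u)=1/\rho$; since $u_0\ll u_a<u_b<u_1$ for $\rho\geq\rho_0$, and direct computation shows $G'(u_0)<0$ while $G'(u_1)>0$, the derivative $G'$ crosses zero exactly once in $(u_0,u_1]$, so $G$ is unimodal (decreasing then increasing) and $\sup G=\max(G(u_0),G(u_1))$. Plugging in $\ln u_0=2-\rho+\ln\rho-\ln\ln 2+O(\rho^{-2})$ gives $2+\ln\rho-\ln u_0\leq\rho-0.3$, and $1-\exp(-u_0)\geq u_0-u_0^2/2$ together with the negligibility of $\rho u_0^2$ yields $G(u_0)\leq-0.2\,u_0=-\Omega(\rho\exp(-\rho))$; at $u_1\leq 5\ln\rho$ the penalty $\rho(1-\exp(-u_1))\approx\rho$ dwarfs the $O((\ln\rho)^2)$ entropy term, giving $G(u_1)\leq-\rho/2$. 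Both endpoint bounds beat $-k\rho/2^k$ by another invocation of $2^k\geq k^2\exp(\rho)$, which closes the proof. The main obstacle I anticipate is the unimodality step, since $G'$ is itself non-monotone; the threshold $\alpha_1$ is chosen precisely so that the ordering $u_0\ll 1/\rho\ll\ln\rho\ll u_1$ forces $G'$ to complete exactly one sign change on the interval of interest, making the endpoint evaluation valid.
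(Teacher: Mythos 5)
Your proof is correct, and it takes a genuinely different route from the paper's. The paper splits $(a,1]$ into four intervals with breakpoints at $1/(k\rho\theta)$, $1/(k\theta)$, and $0.15$, and in each interval verifies the estimate $\psi(\alpha)<-\rho/2^k$ via a tailored inequality: a quadratic approximation to the exponential in the two small-$\alpha$ cases, $h(\alpha)\leq h(0.15)$ plus $1-\exp(-\alpha k\theta)\geq1-1/\eul$ in the third, and $h(\alpha)\leq\ln2$ in the fourth. You instead use a two-piece split at $\alpha_1=3\ln\rho/\rho$, handle the upper piece with the same crude bound as the paper's Case 4, and for the lower piece change variables to $u=k\alpha\theta$ to reduce everything to the single one-parameter function $G(u)=u(2+\ln\rho-\ln u)-\rho(1-\exp(-u))$. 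The key step --- not present in the paper --- is the second-derivative analysis showing that $G''>0$ precisely on $(u_a,u_b)$ (the two roots of $u\exp(-u)=1/\rho$), together with $G'(u_0)<0$ and $G'(u_1)>0$, which forces $G$ to be unimodal (decreasing then increasing) on $[u_0,u_1]$, so that only the endpoints need evaluation. This buys you a more systematic structure (the ``worst case'' is transparently at the endpoints) and a parametrization that scales the $k,\theta$ dependence away, at the cost of a more delicate calculus argument: you need the ordering $u_0\ll u_a<u_b<u_1$ and both endpoint sign conditions on $G'$. The paper's case-by-case inequalities are more ad hoc but each individual step is shorter and easier to check. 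Both arguments invoke the same initial reduction $\ln(1-x)\leq -x$, $(1-\alpha\theta)^k\leq\exp(-\alpha k\theta)$, and the bound $2^k\geq k^2\exp(\rho)$ coming from $\rho\leq k\ln2-2\ln k$, and both arrive at the same qualitative picture of where the slack $\rho/2^k$ is absorbed.

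One minor remark: you should note explicitly that $\rho_0$ must be taken large enough that $\alpha_1=3\ln\rho/\rho<1$ (which holds for $\rho\geq5$ or so) and that $u_0\ll u_a\sim 1/\rho$ requires $\rho^2\exp(2-\rho)\to0$, both of which are easy but worth a sentence.
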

\begin{proof}
Let $h\bc\cdot$ be the entropy function.
We have
	\begin{eqnarray*}
	\psi(\alpha)&\leq&\theta h(\alpha)-\frac{\rho}k(1-\exp(-\alpha k\theta)).
	\end{eqnarray*}
To bound the r.h.s., we are going to consider several cases.
\begin{description}
\item[Case 1: $\alpha \leq1/(k\rho\theta)$.]
	As $\alpha\geq a=\exp(2-\rho)$, we obtain
	\begin{eqnarray*}
	\psi(\alpha)&\leq&\alpha\theta\brk{1-\ln\alpha-\rho+\alpha k\rho\theta/2}
		\leq\alpha\theta\brk{\frac{\alpha k\rho\theta}{2}-1}\leq-\alpha\theta/2.
	\end{eqnarray*}
	The assumption $\rho\leq k\ln2-2\ln k$ ensures that the last term is smaller than
		$-\rho/2^k$.
\item[Case 2: $1/(k\rho\theta)<\alpha<1/(k\theta)$.]
	We have
	\begin{eqnarray*}
	\psi(\alpha)&\leq&
		\alpha\theta\brk{1-\ln\alpha-\rho+\alpha k\rho\theta/2}\\
		&\leq&\alpha\theta\brk{1+\ln(k\rho\theta)-\rho+\frac{\alpha k\rho\theta}{2}}\\
		&\leq&\alpha\theta\brk{1+\ln(k\rho\theta)-\rho/2}\qquad\qquad\quad\mbox{[as $\alpha<1/(k\theta)$]}\\
		&\leq&\alpha\theta\brk{1-\ln\ln2+2\ln\rho-\rho/2}\qquad\mbox{ [as $\theta\leq\frac{\rho}{k\ln2}$]}\\
		&\leq&-\alpha\theta\rho/4.
	\end{eqnarray*}
	The assumption $\rho\leq k\ln2-2\ln k$ ensures that the last term is smaller than
		$-\rho/2^k$.
\item[Case 3: $1/(k\theta)<\alpha\leq\alpha_0=0.15$.]
	We have
		\begin{eqnarray*}
		\psi(\alpha)&\leq&\theta h(\alpha)-\frac{\rho}k(1-\exp(-\alpha k\theta))
			\leq\theta h(\alpha_0)-\frac\rho k(1-1/\eul)\\
			&\leq&\frac\rho k\brk{\frac{h(\alpha_0)}{\ln2}-1+1/\eul}.
		\end{eqnarray*}
	The choice of $\alpha_0$ ensures that the last term is smaller than $-\rho/2^k$.
\item[Case 4: $\alpha_0<\alpha$.]
	As $k\theta=(1-1/\rho^2)\rho/\ln2$, we get
		\begin{eqnarray*}
		\psi(\alpha)&\leq&\theta h(\alpha)-\frac{\rho}k(1-\exp(-\alpha k\theta))
			\leq\theta\ln2-\frac\rho k(1-\exp(-\alpha_0(1-1/\rho^2)\rho/\ln2))\\
			&\leq&\frac\rho k\brk{\exp(-\alpha_0\rho)-1/\rho^{2}}.
		\end{eqnarray*}
	The last term is smaller than $-\rho/2^k$.
\end{description}
\qed\end{proof}

\begin{proof}[\Thm~\ref{Thm_shape}, part~2]
Let $\theta_0=(1-1/\rho^2)\rho/(k\ln2)$ and $t_0=(1-\theta_0)n$.
Suppose that $\theta\geq\theta_0$.
Then $\PHI_t$ is obtained from $\PHI_{t_0}$ by assigning some further variables.
Therefore, 
	$$\max\cbc{d(\sigma,\tau):\sigma,\tau\in\cS(\PHI_t)}\leq
			\max\cbc{d(\sigma,\tau):\sigma,\tau\in\cS(\PHI_{t_0})}.$$
Hence, \Prop~\ref{Prop_cond} and \Lem~\ref{Lemma_cond} imply that $\PHI_t$ is $\exp(2-\rho)$-condensed \whp
\qed\end{proof}

\subsection{Pairwise distances}

Recall that $\Phi_{t,\sigma}$ denotes the formula obtained by substituting the values $\sigma(x_i)$ for $x_i$ for $1\leq i\leq t$.

\begin{lemma}\label{Lemma_overlap}
Suppose that $\theta\geq\frac\rho{k\ln 2}(1+1/\rho^2+k/2^{k-2})$.
Let $(\Phi,\sigma)$ be a pair chosen from the distribution $\cU_k(n,m)$. 
\Whp\ we have
	$$\abs{\cbc{\tau\in\cS(\Phi_{t,\sigma}):\dist(\tau,\sigma_t)\leq0.49\theta n}}\leq\exp(-\Omega(n))\abs{\cS_t(\Phi)}.$$
\end{lemma}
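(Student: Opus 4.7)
The plan is to apply the first moment method and compare the expected number of satisfying assignments of $\Phi_{t,\sigma}$ within distance $0.49\theta n$ of $\sigma_t$ to the lower bound on $\abs{\cS(\Phi_t)}$ from \Cor~\ref{Cor_count}. To make the first moment computation tractable, I would first work in the planted model $\cP_k(n,m)$ and then transfer to $\cU_k(n,m)$ via \Cor~\ref{Cor_SATExchange}. Concretely, let $(\Phi,\sigma)$ be drawn from $\cP_k(n,m)$ and, for $\alpha\in(0,1)$, let $X_\alpha$ denote the number of $\tau\in\cS(\Phi_{t,\sigma})$ with $\dist(\tau,\sigma_t)=\alpha\theta n$. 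The same direct computation used at~(\ref{eqicy2})--(\ref{eqicy3}) gives $\frac1n\ln\Erw X_\alpha\leq\psi(\alpha)$, where $\psi$ is the function introduced in \Prop~\ref{Prop_icy_sat}.

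Set $b=\theta\ln2+2^k\rho\ln(1-2^{-k})/k-\rho/2^k$, the quantity lower-bounding $\frac1n\ln\abs{\cS(\Phi_t)}$ \whp\ by \Cor~\ref{Cor_count}. It suffices to exhibit a constant $\xi=\xi(k,\rho)>0$ such that
$$\sup_{0<\alpha\leq0.49}\psi(\alpha)<b-\rho/2^k-\xi.$$
Granted this, summing $\Erw X_{j/(\theta n)}$ over integers $0\leq j\leq0.49\theta n$ and applying Markov's inequality shows that, in $\cP_k(n,m)$, the number of satisfying assignments within distance $0.49\theta n$ of $\sigma_t$ is at most $\exp(bn-\xi n/2)\leq\exp(-\xi n/3)\abs{\cS(\Phi_t)}$ with probability $\geq 1-\exp(-\rho n/2^k)$; \Cor~\ref{Cor_SATExchange} then transfers this to $\cU_k(n,m)$.

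The remaining task is purely calculus: verify the $\sup$ bound. I would split $[0,0.49]$ into the same four regimes handled in \Lem~\ref{Lemma_cond}. For $\alpha\leq 1/(k\rho\theta)$, use the expansion $1-\exp(-\alpha k\theta)\geq \alpha k\theta-(\alpha k\theta)^2/2$ and the bound $h(\alpha)\leq\alpha(1-\ln\alpha)$ to control $\psi$. For $1/(k\rho\theta)<\alpha<1/(k\theta)$ and for $1/(k\theta)\leq\alpha\leq 0.15$ the exact arguments of Cases~2 and~3 of \Lem~\ref{Lemma_cond} apply verbatim and already give $\psi(\alpha)<-\rho/2^k$ with room to spare, which is more than enough since $b\geq\theta\ln2-2\rho/k$.

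The genuinely delicate regime is $\alpha\in[0.15,0.49]$, where the entropy term $\theta h(\alpha)$ approaches $\theta\ln2$ and nearly cancels the dominant part of $b$. Here I would use $h(\alpha)\leq h(0.49)$ together with $(1-\alpha\theta)^k\leq\exp(-0.15\theta k)\leq\exp(-0.15\rho/\ln2)$ to obtain
$$\psi(\alpha)\leq\theta h(0.49)-\frac{\rho}{k}\cdot\frac{1-\exp(-0.15\rho/\ln 2)}{1-2^{-k}},$$
so the required inequality reduces to
$$\theta\bigl(\ln2-h(0.49)\bigr)>\frac{2\rho}{2^k}+O\Bigl(\frac{\rho}{k2^k}\Bigr)+\xi.$$
The main obstacle is precisely this last comparison: because $\ln2-h(0.49)$ is a small positive constant, the required margin on $\theta$ must be at least of order $\rho/2^k$. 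This is exactly what the correction term $k/2^{k-2}$ in the hypothesis $\theta\geq(\rho/(k\ln2))(1+1/\rho^2+k/2^{k-2})$ supplies: it forces $\theta\geq\rho/(k\ln2)+4\rho/(2^k\ln2)$, which for $k\geq k_0$ large enough dominates $2\rho/2^k$ by a multiplicative constant, yielding the desired $\xi$. Careful bookkeeping of these lower-order terms, together with showing that the correction is absorbed uniformly on $[0.15,0.49]$, will be the technical heart of the proof.
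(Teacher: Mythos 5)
Your outline follows the paper's proof closely: a first-moment bound $\frac1n\ln\Erw X_\alpha\le\psi(\alpha)$ in the planted model, the lower bound $b=\theta\ln2+2^k\rho\ln(1-2^{-k})/k-\rho/2^k$ on $\frac1n\ln|\cS(\PHI_t)|$ from \Cor~\ref{Cor_count}, and a transfer to $\cU_k(n,m)$ via \Thm~\ref{Cor_SATExchange}. That part is exactly right. The calculus plan, however, has several slips that together constitute a real gap.

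First, the lower bound ``$b\ge\theta\ln2-2\rho/k$'' is much too loose: since $\theta\ln2$ can be as small as $(1+o(1))\rho/k$, this only gives $b\ge-\rho/k$, which is negative and does not permit you to conclude $\psi<b-\rho/2^k-\xi$ from ``$\psi<-\rho/2^k$ with room to spare'' in Cases~2 and~3. You need the sharper estimate $\ln(1-2^{-k})\ge -2^{-k}-4^{-k}$, giving $b\ge\theta\ln2-\rho/k-O(\rho/2^k)$; then the correction terms in the hypothesis on $\theta$ deliver $b\ge\frac{1}{k\rho}+\Omega(\rho/2^k)>0$. Relatedly, Cases~2 and~3 of \Lem~\ref{Lemma_cond} are proved under the upper bound $\theta\le\rho/(k\ln2)$, which runs in the opposite direction of your hypothesis; they do not apply ``verbatim.'' The paper fixes this by noting that $\psi(\alpha)-\theta\ln2-r\ln(1-2^{-k})$ is decreasing in $\theta$, so one may plug in the minimal admissible $\theta$ — a step you must replicate.

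Second, and more importantly, you have misidentified the delicate regime. It is not $\alpha\in[0.15,0.49]$: there the needed inequality $\theta(\ln2-h(0.49))>2\rho/2^k+\cdots$ is established by the \emph{main} term, since $\frac{\rho}{k\ln2}(\ln2-h(0.49))\approx\frac{4\cdot10^{-4}\rho}{k}$ dominates $\frac{2\rho}{2^k}$ once $k$ is large. Your claim that the $k/2^{k-2}$ correction ``supplies'' this is numerically wrong: multiplying the extra $\Delta\theta\approx\frac{4\rho}{2^k\ln2}$ by the tiny factor $\ln2-h(0.49)\approx3\cdot10^{-4}$ yields only about $\frac{0.002\rho}{2^k}\ll\frac{2\rho}{2^k}$. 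The genuinely tight case is $\alpha\approx\exp(-\rho)$, where $\sup\psi\le\frac{3}{2\eul^2k\rho}$ (by \Lem~\ref{Lemma_alpha}) is comparable to $b\approx\frac{1}{k\rho}$; it is precisely there that the correction terms $1/\rho^2$ and $k/2^{k-2}$ in the hypothesis on $\theta$ are needed to push $b$ above this supremum with slack. This is what the paper's Case~0 does explicitly; your Case~1 (``$\alpha\le1/(k\rho\theta)$: use the expansion'') sweeps this delicate comparison under the rug.
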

\begin{proof}
We need to work with the function
	$$\psi(\alpha)=-\alpha\theta\ln\alpha-(1-\alpha)\theta\ln(1-\alpha)+
			\frac{2^k\rho}{k}\ln\bc{1-\frac{1-(1-\alpha\theta)^k}{2^k-1}}.$$
By \Cor~\ref{Cor_count}, \whp\ $\frac1n\ln\abs{\cS(\Phi_t)}\geq\theta\ln2+r\ln\bc{1-2^{-k}}-\rho/2^k$.
From now on, we are going to work with the planted model $\cP_k'(n,m)$.
We are going to show that
	$$\sup_{\alpha\leq0.1}\psi(\alpha)-\theta\ln2-r\ln\bc{1-2^{-k}}<-\rho/2^{k-1}.$$
Then the assertion follows from \Cor~\ref{Cor_Pnm'}.
We have
	\begin{eqnarray*}
	\psi(\alpha)-\theta\ln2-\frac{2^k\rho}{k}\ln\bc{1-2^{-k}}&=&
			\theta(h(\alpha)-\ln2)+\frac{2^k\rho}k\ln\brk{1+\frac{(1-\alpha\theta)^k-2^{1-k}(1-(1-\alpha\theta)^k)}{2^k-1}}\\
		&\leq&\theta(h(\alpha)-\ln2)+\frac{\rho}{k}(1-\alpha\theta)^k+2^{-k}\\
		&\leq&\theta(h(\alpha)-\ln2)+\frac{\rho}{k}\exp(-\alpha k\theta)+2^{-k}.
	\end{eqnarray*}
The differential of the last expression with respect to $\theta$ is negative,
and thus the function is monotonically decreasing in $\theta$.
Therefore, it suffices to consider the minimum value $\theta=\rho/(k\ln2)$.
Thus, we obtain
	\begin{eqnarray*}
	\psi(\alpha)-\theta\ln2-\frac{2^k\rho}k\ln\bc{1-2^{-k}}&\leq&\frac{\rho}k\bc{\frac{h(\alpha)}{\ln2}-1+\exp(-\alpha \rho/\ln2)}+2^{-k}.
	\end{eqnarray*}
We consider a few different cases.
\begin{description}
\item[Case 0: $\alpha<\exp(2-\rho)$.]
	\Lem~\ref{Lemma_alpha} shows that
		$\psi(\alpha)\leq1/(k\rho)$ and (\ref{eqClusterEntropy}) shows that
			$$\theta\ln2+2^k\frac\rho k\ln(1-2^{-k})\geq\theta\ln2-\rho/k-\rho/2^k.$$
	Hence,
		\begin{eqnarray*}
		\psi(\alpha)-\theta\ln2-\frac{2^k\rho}k\ln\bc{1-2^{-k}}&\leq&\frac1{k\rho}-\theta\ln2+\frac\rho k+\rho/2^k.
		\end{eqnarray*}
	Since we are assuming that $\theta\geq\frac\rho{k\ln 2}(1+1/\rho^2+k/2^{k-2})$,
		the r.h.s.\ is smaller than $\rho/2^{k-1}$.
\item[Case 1: $\exp(2-\rho)\leq\alpha\leq\exp(-\rho/2)$.]
	Bounding the exponential by a quadratic function, we get
	\begin{eqnarray*}
	\psi(\alpha)-\theta\ln2-r\ln\bc{1-2^{-k}}&\leq&
		\frac{\alpha\rho}{ k\ln2}\brk{1-\ln\alpha-\rho+\frac{\alpha\rho^2}{4\ln2}}+2^{-k}\\
		&\leq&\frac{\alpha\rho}{ k\ln2}\brk{-1+\frac{(\alpha\rho)^2}{2\ln2}}+2^{-k}<-\rho/2^{k-1},\\
	\end{eqnarray*}
	provided that $\rho_0\leq \rho\leq k\ln2-2\ln k$.
\item[Case 2: $\exp(-\rho/2)\leq\alpha\leq1/(2\rho)$.]
	Bounding the exponential by a quadratic function, we get
	\begin{eqnarray*}
	\psi(\alpha)-\theta\ln2-r\ln\bc{1-2^{-k}}&\leq&
		\frac{\alpha\rho}{ k\ln2}\brk{1-\ln\alpha-\rho+\frac{(\alpha\rho)^2}{2\ln2}}+2^{-k}<-\rho/2^{k-1},
	\end{eqnarray*}
	provided that $\rho_0\leq \rho\leq k\ln2-2\ln k$.
\item[Case 3: $1/(2\rho)<\alpha\leq10\ln(2)/\rho$.]
	Suppose that $\alpha=x\ln(2)/\rho$ for some $1/2\leq x\leq10\ln2$.
	Then
	\begin{eqnarray*}
	\psi(\alpha)-\theta\ln2-r\ln\bc{1-2^{-k}}&\leq&
		\frac{\rho}{k}\brk{\frac{x\ln2}\rho(1-\ln x-\ln\ln2+\ln\rho)-1+\exp(-x)}+2^{-k}.
	\end{eqnarray*}
	As $x$ remains bounded away from $0$, the term $\exp(-x)-1$ is strictly negative.
	Thus, the entire expression is smaller than $-\rho/2^{k-1}$ for $\rho\geq\rho_0$ sufficiently large.
\item[Case 4: $10\ln(2)/\rho<\alpha\leq0.49$.]
	We have
	\begin{eqnarray*}
	\psi(\alpha)-\theta\ln2-r\ln\bc{1-2^{-k}}&\leq&\frac{\rho}k\bc{\frac{h(0.1)}{\ln2}-1+\exp(-10)}+2^{-k}.
	\end{eqnarray*}
	The r.h.s.\ is clearly smaller than $-\rho/2^{k-1}$.
\end{description}
\qed\end{proof}
\Lem~\ref{Lemma_overlap} directly implies the third part of \Thm~\ref{Thm_shape}.

\section{Proof of \Thm~\ref{Thm_marginals}}\label{Sec_marg}

\subsection{Bounding the marginals away from $0,1$}

Here we prove the first part of \Thm~\ref{Thm_marginals}.
We may assume that $\theta\geq\rho/(k\ln2)$.
The goal is to show that the marginals of a substantial fraction of the variables
$x_{t+1},\ldots,x_n$ are bounded away from $0,1$.

We set up an auxiliary graph $\cG$ whose vertices are all pairs $(x,\tau)$ of variables $x\in V_t$ and assignments $\tau\in\cS(\Phi_t)$.
A pair $(x,\tau)$ is connected by an edge with another pair $(x,\tau')$ if $\tau(x)=\tau'(x)$.
(Thus, the graph consists of components $(x,\cdot)$ with $x\in V_t$.)
\Lem~\ref{Lemma_overlap} implies the following.

\begin{corollary}
Let $\Phi_t$ is the formula obtained through the experiment {\bf U1}--{\bf U4}.
\Whp\ we have $2|E(\cG)|\leq0.511|\cS_t(\Phi)|^2\theta n$.
\end{corollary}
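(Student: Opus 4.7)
The plan is to translate $2|E(\cG)|$ into an expression involving the average pairwise Hamming distance between satisfying assignments of $\PHI_t$, and then apply \Lem~\ref{Lemma_overlap} to lower-bound this average.

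First, by a standard double-counting / handshake calculation, for any fixed formula $\Phi_t$ we have
$$2|E(\cG)|=\sum_{(x,\tau)}\deg_{\cG}(x,\tau)=\sum_{x\in V_t}\,\abs{\cbc{(\tau,\tau')\in\cS(\Phi_t)^2:\tau\neq\tau',\,\tau(x)=\tau'(x)}},$$
since a vertex $(x,\tau)$ is joined to $(x,\tau')$ iff $\tau\neq\tau'$ and they agree on $x$. Swapping the order of summation and using that for any two assignments on $V_t$ the number of coordinates on which they agree equals $\theta n - d(\tau,\tau')$ gives
$$2|E(\cG)|=\sum_{\tau\neq\tau'\in\cS(\Phi_t)}(\theta n-d(\tau,\tau'))\leq|\cS(\Phi_t)|^2\,\theta n-\sum_{\tau,\tau'\in\cS(\Phi_t)}d(\tau,\tau'),$$
where the diagonal $\tau=\tau'$ contributes $0$ to the pairwise distance and only a lower-order $|\cS(\Phi_t)|\cdot\theta n$ to the full sum. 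It therefore suffices to show that the ordered-pair average $\bar d:=|\cS(\Phi_t)|^{-2}\sum_{\tau,\tau'}d(\tau,\tau')$ satisfies $\bar d\geq 0.489\,\theta n$ \whp.

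For the lower bound on $\bar d$, recall from Fact~\ref{XFact_UD} that conditional on $\PHI_t$ the assignment $\SIGMA_t$ produced by \textbf{U1}--\textbf{U4} is uniformly distributed on $\cS(\PHI_t)$. Hence \Lem~\ref{Lemma_overlap} asserts that with probability $1-o(1)$ over the pair $(\PHI_t,\SIGMA_t)$, the `ball' $B(\SIGMA_t)=\cbc{\tau\in\cS(\PHI_t):d(\SIGMA_t,\tau)\leq 0.49\theta n}$ has size at most $\exp(-\Omega(n))|\cS(\PHI_t)|$. An application of Markov's inequality at the outer level then shows that \whp\ over $\PHI_t$, the fraction of \emph{bad} $\tau\in\cS(\PHI_t)$ for which $|B(\tau)|$ exceeds this threshold is $o(1)$. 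For any good $\tau$, $\Erw_{\tau'\in\cS(\PHI_t)}[d(\tau,\tau')]\geq 0.49\,\theta n\cdot(1-\exp(-\Omega(n)))$, and averaging over $\tau$ (with the $o(1)$ fraction of bad $\tau$'s contributing at least $0$) yields $\bar d\geq 0.489\,\theta n$ for $n$ large.

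Plugging this back into the displayed inequality gives $2|E(\cG)|\leq 0.511\,|\cS(\PHI_t)|^2\theta n$ \whp, as required. There is no real obstacle here; the only technical point is the standard two-layer Markov argument that converts the w.h.p.\ statement of \Lem~\ref{Lemma_overlap} (over $(\PHI_t,\SIGMA_t)$) into a uniform lower bound on the average distance inside almost every $\cS(\PHI_t)$.
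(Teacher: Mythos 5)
Your proposal is correct and follows the same route as the paper: double-count $2|E(\cG)|$ over ordered pairs $(\tau,\tau')$, reduce the claim to a lower bound of roughly $0.49\theta n$ on the average pairwise Hamming distance in $\cS(\PHI_t)$, and obtain that bound from \Lem~\ref{Lemma_overlap} via the averaging step that converts the w.h.p.\ statement over $(\PHI_t,\SIGMA_t)$ into an ``almost all $\tau$'' statement for $\PHI_t$. The paper's proof is just a terser phrasing of exactly this argument (counting edges per $\tau$-slab rather than per variable, which is the same after swapping the order of summation).
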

\begin{proof}
We count the number of edges from each assignment $\tau$.
By \Lem~\ref{Lemma_overlap}, almost all assignments $\tau$ are such that the `overlap'
with almost all other assignments $\tau'$ is at most $0.51\theta n$.
For such assignments, the number of edges incident with $\cbc{(\tau,x):x\in V_t}$ is bounded by $(1+o(1))0.51\theta n$.
\qed\end{proof}

To bound the marginals away from $0,1$, assume that indeed $2|E(\cG)|\leq0.511|\cS_t(\Phi)|^2\theta n$.
Any variable $x$ whose marginal does not lie in $(0.01,0.99)$ is such that the set
	$\cbc{(\tau,x):\tau\in\cS(\Phi_t)}$ induces at least $(1+o(1))0.99\cS(\Phi_t)^2/2$ edges.
Hence, if we let $\nu$ be the number of such variables, then
	$(1+o(1))0.99\cS(\Phi_t)^2\nu\leq2|E(\cG)|\leq0.511|\cS_t(\Phi)|^2\theta n$.
Hence, $\nu\leq\frac{0.511+o(1)}{0.99}\theta n\leq\frac23\theta n$.

\subsection{Concentration of the marginals about $0,1$}\label{Sec_Marg01}

To prove the second part of \Thm~\ref{Thm_marginals}, we need the following lemma.

\begin{lemma}\label{Lemma_rigidity}
Suppose that $\theta\leq\rho/(k\ln2)$.
Let $(\Phi,\sigma)$ be a pair chosen from the distribution $\cU_k\bc{n,m}$.
\Whp\ there is no set of variables $Z\subset V_t$ of size
	$2kn/2^k\leq |Z|\leq (\eul\rho)^{-4}\theta n$ such that
each variable in $Z$ supports two clauses under $\sigma$, each of which contains
an occurrence of a variable in $Z$ that evaluates to `false' under $\sigma$.
\end{lemma}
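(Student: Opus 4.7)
The plan is to work in the planted model $\cP_k'(n,m)$ and transfer the conclusion back to $\cU_k(n,m)$ via \Cor~\ref{Cor_Pnm'}. Without loss of generality I condition on the planted assignment being all-true, so that a variable of $Z$ ``evaluates to false'' inside a clause precisely when it occurs negatively. For a fixed set $Z\subset V_t$ with $|Z|=z$, the goal is to bound the probability that every $x\in Z$ supports at least two clauses of the planted formula each containing some variable of $Z$ negatively.

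For $x\in Z$, the set of potential clauses that $x$ can support has size $\binom{n-1}{k-1}$, and among these the number $N_Z$ that additionally feature at least one negative occurrence of a $Z$-variable satisfies $N_Z=\binom{n-1}{k-1}-\binom{n-z}{k-1}\leq(k-1)z\binom{n-1}{k-1}/n$. Because each clause has a unique supporting variable, the potential clause-sets attached to distinct $x\in Z$ are disjoint; hence in $\cP_k'(n,m)$ the counts $Y_x^Z$ of bad supported clauses are mutually independent $\Bin(N_Z,p)$ random variables with common mean $\lambda_Z=pN_Z\leq\mu(k-1)z/n$, where $\mu=\rho 2^k/(2^k-1)$ and $p=m/((2^k-1)\binom nk)$.

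Using the elementary estimate $\pr\brk{\Bin(N_Z,p)\geq2}\leq\lambda_Z^2/2$ and the independence across $x\in Z$, I obtain
\[\pr\brk{\text{every }x\in Z\text{ supports at least two bad clauses}}\leq(\lambda_Z^2/2)^z\leq(\mu kz/n)^{2z}.\]
Combining this with $\binom{n}{z}\leq(\eul n/z)^z$ and summing over $z$ in the range $[2kn/2^k,(\eul\rho)^{-4}\theta n]$ then yields
\[\pr_{\cP_k'(n,m)}\brk{\exists\text{ such }Z}\leq\sum_z(\eul\mu^2k^2z/n)^z,\]
and once this sum is shown to be at most $\exp(-krn/4^k)$, \Cor~\ref{Cor_Pnm'} finishes the proof.

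The main obstacle is the calculus in this summation: one needs $\eul\mu^2k^2z/n$ to be strictly bounded away from $1$ throughout the entire $z$-range, so that each term $(\eul\mu^2 k^2 z/n)^z$ is exponentially small and the sum stays below $\exp(-krn/4^k)$. Under the hypothesis $\theta\leq\rho/(k\ln2)$, substituting the upper endpoint $z=(\eul\rho)^{-4}\theta n$ into $\eul\mu^2k^2z/n$ produces a quantity of order $k/\rho$; the prefactor $(\eul\rho)^{-4}$ in the lemma statement was calibrated precisely so that this ratio remains under control. For the very smallest values of $z$ an alternative and perhaps cleaner route is to invoke \Lem~\ref{Lemma_exp_small} directly: the existence of such a $Z$ would force at least $2z$ distinct clauses of $\PHI$ to contain two variables from $Z$ (distinct because each clause has a unique supporting variable), contradicting the expansion property \whp\ whenever $|Z|\leq\chi(k)n$. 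Splitting the $z$-range accordingly and combining the two arguments handles the whole admissible range.
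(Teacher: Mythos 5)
Your overall strategy---work in the planted model $\cP_k'(n,m)$, bound the probability that a fixed $Z$ of size $z$ is bad via independence of per-variable support counts, take a union bound, and transfer to $\cU_k(n,m)$ via \Cor~\ref{Cor_Pnm'}---is exactly the paper's approach, and your computation of $\lambda_Z\leq\mu(k-1)z/n$ and the per-$Z$ bound $(\lambda_Z^2/2)^z\leq(\mu kz/n)^{2z}$ matches the paper's $(z\binom{n}{k-2}p)^{2z}\leq(\alpha k\theta\rho)^{2z}$.

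However, there is a genuine error in the union bound. The set $Z$ ranges over subsets of $V_t$, which has only $\theta n$ elements, so the correct count is $\binom{\theta n}{z}\leq(\eul\theta n/z)^z$, not $\binom{n}{z}\leq(\eul n/z)^z$. Your choice loses a factor of $\theta^z$, and this is precisely the reason you end up with a quantity of order $k/\rho$ at the upper endpoint. That quantity is \emph{not} under control: $\rho$ can be a fixed constant $\rho_0$ while $k\to\infty$, so $k/\rho$ is unbounded. With the correct $\binom{\theta n}{z}$, one gets $(\eul\theta n/z)^z(\rho kz/n)^{2z}=\bigl[\eul(k\theta\rho)^2\cdot z/(\theta n)\bigr]^z$, and under the hypothesis $\theta\leq\rho/(k\ln2)$ this is at most $\bigl[\eul\rho^4\ln^{-2}(2)\cdot z/(\theta n)\bigr]^z$; at the upper endpoint $z=(\eul\rho)^{-4}\theta n$ the bracketed quantity becomes $\eul^{-3}/\ln^2 2<1/\eul$, yielding the required $\exp(-z)$ decay. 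The paper does exactly this. So the calibration of $(\eul\rho)^{-4}$ you refer to only ``remains under control'' once the binomial coefficient is taken over $\theta n$ elements, not $n$.

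Your proposed fallback for small $z$ via \Lem~\ref{Lemma_exp_small} also does not work in the relevant range. The constant $\chi(k)$ in that lemma's proof is chosen so that $\chi^{1/4}\leq 4/(\eul k^22^k)$, hence $\chi(k)$ is of order $k^{-8}2^{-4k}$. The \emph{smallest} $z$ in the lemma's range is $2kn/2^k$, and $2k/2^k\gg k^{-8}2^{-4k}$ for all $k\geq2$, so the expansion lemma fails to cover even the bottom of the range. The lower bound $2kn/2^k$ on $|Z|$ is there to make $\sum_z\exp(-z)\leq\exp(-\rho n/2^k)$ so that the \Cor~\ref{Cor_Pnm'} transfer applies; it is not a threshold below which a separate small-set argument is intended.

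Once the binomial coefficient is corrected to $\binom{\theta n}{z}$, your proof becomes the paper's proof (with slightly more explicit bookkeeping of $\lambda_Z$), and the alternative route via \Lem~\ref{Lemma_exp_small} can simply be dropped.
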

\begin{proof}
We work with the planted model $\cP_k'\bc{n,m}$.
Let $p$ be such that the expected number of clauses is $m$,
i.e., $(2^k-1)\bink{n}kp=m$.
Then the probability that a given set $Z$ of size $z$ is `bad' is bounded by
	$$\bc{z\bink{n}{k-2}p}^{2z}\leq(\alpha k\theta\rho)^{2z},\mbox{ with }\alpha=z/(\theta n).$$
Thus, the probability that there is a bad set of size $z$ is bounded by
	\begin{eqnarray*}
	\bink{\theta n}z(\alpha k\theta\rho)^{2z}&\leq&
		\bcfr{\eul\theta n}{z}^z\bc{\alpha k\theta\rho}^{2z}
			=\brk{\eul\alpha(k\theta\rho)^2}^z
			\leq(\eul\alpha\rho^4/\ln^22)^z\leq\exp(-z).
	\end{eqnarray*}
The assumption on $z$ ensures that this is sufficiently small
to move from the planted model to $\cU_k\bc{n,m}$
via \Cor~\ref{Cor_Pnm'}.
\qed\end{proof}

\begin{proof}[\Thm~\ref{Thm_marginals}, part~2]
If $k\theta<\ln(\rho)/2$, then the existence of forced variables immediately implies part~2 of \Thm~\ref{Thm_marginals}.
Thus, let us assume that $\ln(\rho)/2\leq k\theta\leq\rho/\ln2$.
Let $(\Phi,\sigma)$ be a pair chosen from the distribution $\cU_k(n,m)$.
Let $S$ be the set of rigid variables;
	by \Thm~\ref{Thm_vars}, we have $|S|\geq\rho^3\exp(-\rho)\theta n$ \whp\
Define an auxiliary bipartite graph as follows.
The vertices of the graph are the variables in $S$ and the satisfying assignments $\cS(\Phi_{t,\sigma})$.
Each variable $x\in S$ is connected with all $\tau\in\cS(\Phi_{t,\sigma})$ such that $\tau(x)\neq\sigma(x)$.
By the \Lem~\ref{Lemma_rigidity} and because
	$\Phi$ is $\exp(2-\rho)$-condensed (part~2 of \Thm~\ref{Thm_shape}), there is $\eps_k\ra0$ such that the number of edges of this bipartite graph is bounded
by $k2^{1-k}n\abs{\cS_t(\Phi)}$.
Hence, the degree sum over the variables satisfies
	$$\sum_{x\in S}d(x)\leq k2^{2-k}n\abs{\cS_t(\Phi)}.$$
We may assume without loss of generality that $\sigma=\vecone$ is the all-true assignment.
Then the marginal $\mu(x)$ equals $1-d(x)/\abs{\cS_t(\Phi)}$.
Hence, the above bound on the degree sum shows that $\mu(x)\leq 2^{-k/2}$ for all but $0.01\theta n$ variables $x\in S$.
\qed\end{proof}

\begin{proof}[\Thm~\ref{XThm_cond}, part~4]
This follows directly by applying \Lem~\ref{Lemma_rigidity} to the self-contained set 
obtained in Appendix~\ref{Sec_rigid}.
\qed\end{proof}

\section{Belief propagation}\label{Sec_BPproofs}

The proof of \Thm~\ref{XThm_NoBP} is based on results from~\cite{BPdec}.
These results show that, in order to obtain \Thm~\ref{XThm_NoBP},
we essentially have to verify that the outcome $\PHI_t$ of the experiment {\bf U1}--{\bf U4} enjoys certain quasi-randomness properties.
We begin by stating the necessary properties.
To this end, we define
	\begin{equation}\label{eqdeltat}
	\qquad\delta_t=\exp(-c\theta k),
	\end{equation}
where $c>0$ is a small absolute constant (independent of $k,r,t,n$).

Fix a $k$-CNF $\Phi$ and an assignment $\sigma\in\cbc{0,1}^V$.
Let $\Phi_{t,\sigma}$ denote the CNF obtained from $\Phi$ by substituting $\sigma(x_1),\ldots,\sigma(x_k)$ for $x_1,\ldots,x_t$ and simplifying.
Let $G=G(\Phi_{t,\id,\vecone})$ denote the factor graph.
For a variable $x\in V_{t}$ and a set $Q\subset V_{t}$ let
	\begin{eqnarray}
	N_{\leq1}(x,Q)&=&\{b\in N(x):\abs{N(x)\cap Q\setminus\cbc x}\leq1
			\wedge0.1\theta k\leq|N(b)|\leq10\theta k\}.
		\label{eqNleq1}
	\end{eqnarray}
Thus, $N_{\leq1}(x,Q)$ is the set of all clauses that contain $x$ (which may or may not be in $Q$) and at most
one other variable from $Q$.
In addition, there is a condition on the \emph{length} $|N(b)|$ of the clause $b$ in the decimated formula $\Phi_{t,\sigma}$.
Observe that having assigned the first $t$ variables, we should `expect' the average clause length to be $\theta k$.
For a linear map $\Lambda:\RR^{V_t}\ra\RR^{V_t}$ let $\cutnorm{\Lambda}$ signify the norm
	$$\cutnorm{\Lambda}=\max_{\zeta\in\RR^{V_t}\setminus\cbc0}\frac{\norm{\Lambda\zeta}_1}{\norm{\zeta}_\infty}.$$

\begin{definition}
Let $\delta>0$.
We say that $(\Phi,\sigma)$ is \emph{\bf\em ($\delta,t)$-quasirandom} if $\Phi$ satisfies {\bf Q0}
and $\Phi_{t,\sigma}$ satisfies {\bf Q1}--{\bf Q4} below.
\begin{description}
\item[\bf Q0.]  There are no more than $\ln\ln n$ redundant clauses.
			Moreover, no variable occurs in more than $\ln n$ clauses of $\Phi$.
\item[\bf Q1.]
	No more than $10^{-5}\delta\theta n$ variables occur in clauses
				of length less than $\theta k/10$ or greater than $10\theta k$.
	Moreover, there are at most $10^{-4}\delta\theta n$  variables $x\in V_{t}$
		such that
			$$\textstyle(\theta k)^3\delta\cdot \sum_{b\in N(x)}2^{-|N(b)|}>1.$$
\item[\bf Q2.]
	If $Q\subset V_{t}$ has size $\abs Q\leq\delta\theta n$, then there are no more than
		$10^{-4}\delta\theta n$ variables $x$ such that either
			\begin{eqnarray}
			\sum_{b\in N(x):|N(b)\cap Q\setminus\cbc x|=1}\hspace{-12mm}
					2^{-|N(b)|}&>&\rho(\theta  k)^5\delta,\mbox{ or}\label{Q2a}\\
			\sum_{b\in N(x):|N(b)\cap Q\setminus\cbc x|>1}\hspace{-12mm}
						2^{|N(b)\cap Q\setminus\cbc x|-|N(b)|}&>&\frac{\delta}{\theta  k},\mbox{ or }\label{Q2b}\\
			\abs{\sum_{b\in N_{\leq1}(x,Q)}\hspace{-2mm}\frac{\sign(x,b)}{2^{|N(b)|}}}&>&\frac{\delta}{1000}.\label{Q2c}
			\end{eqnarray}
\item[\bf Q3.] For any $0.01\leq z\leq 1$ and any set $Q\subset V_{t}$ of size
		$0.01\delta(n-t)\leq|Q|\leq 100\delta(n-t)$
			we have
				$$\sum_{b:|N(b)\cap Q|\geq z|N(b)|}|N(b)|\leq1.01|Q|/z.$$
\item[\bf Q4.] For any set $Q\subset V_{t}$ of size $|Q|\leq 10\delta(n-t)$ the linear operator
			\begin{eqnarray}\label{Q4}
			\Lambda_Q:\RR^{V_{t}}\rightarrow\RR^{V_{t}},&&	\Gamma\mapsto
				\bigg(\sum_{b\in N_{\leq1}(x,Q)}\sum_{y\in N(b)\setminus\cbc x}2^{-|N(b)|}\cdot
					\sign(x,b)\sign(y,b)\Gamma_y\bigg)_{x\in V_{t}}
			\end{eqnarray}
		has norm $\cutnorm{\Lambda_Q}\leq \delta^4\theta n$.
\end{description}
\end{definition}
With respect to {\bf Q0}, we have

\begin{lemma}[\cite{BPdec}]\label{Lemma_Q0}
The random formula $\PHI$ satisfies condition {\bf Q0} \whp,
for any density $0<r=m/n\leq2^k\ln2$.
\end{lemma}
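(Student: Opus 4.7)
The plan is to verify the two clauses of {\bf Q0} separately via routine first-moment arguments. Neither presents a substantive obstacle; the lemma is essentially a sanity check that a random $k$-CNF at density $O(2^k)$ has bounded local complexity, and the main point of stating it is merely to have {\bf Q0} available for the quasi-randomness arguments of~\cite{BPdec}.

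For the variable-degree clause, I would use that $\PHI$ is the union of $m$ independent uniformly random $k$-clauses. For any fixed variable $x$ the number $D_x$ of clauses containing $x$ therefore has a binomial distribution $\Bin(m,k/n)$ with mean $km/n\leq k\cdot 2^k\ln 2$, a constant independent of $n$. Applying the Chernoff bound~(\ref{eqChernoff}) with $t\sim\ln n$, and using that the function $\varphi$ from~(\ref{eqvarphi}) satisfies $\varphi(x)\sim x\ln x$ as $x\to\infty$, one obtains
\[
\pr\brk{D_x>\ln n}\leq\exp\bc{-\Omega(\ln n\cdot\ln\ln n)}=n^{-\omega(1)}.
\]
A union bound over the $n$ choices of $x$ then yields that every variable has degree at most $\ln n$ \whp

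For the redundancy clause, I interpret a redundant clause as one that contains some variable more than once (the alternative reading, namely a clause literally duplicated elsewhere in $\PHI$, is strictly easier: the expected number of pairs of identical clauses is $O(m^2/n^k)=o(1)$ for $k\geq 3$). The probability that a single random $k$-clause contains a repeated variable is at most $\bink{k}{2}/n$, so by linearity the expected number $R$ of redundant clauses is bounded by $\bink{k}{2}r$, a constant depending only on $k$. Markov's inequality then gives $\pr\brk{R>\ln\ln n}=O(1/\ln\ln n)=o(1)$, which combined with the variable-degree bound yields {\bf Q0} \whp

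The only mild point to watch is that the implicit constants depend on $k$ and on $r\leq 2^k\ln 2$; since $k$ is treated as fixed throughout, this is harmless, and no genuine obstacle arises in the argument.
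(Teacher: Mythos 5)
The paper does not prove this lemma; it is quoted verbatim from \cite{BPdec}, so there is no in-paper argument against which to compare your proposal. Your reconstruction (Chernoff plus a union bound for the degree condition, constant expectation plus Markov for the redundancy condition) is the natural first-moment argument and is sound. One caveat: ``redundant clause'' is a term defined in \cite{BPdec}, not here, and in the clause model this paper actually uses---a uniformly random \emph{set} of $m$ clauses, each on $k$ \emph{distinct} variables, as one reads off from the count $(2^k-1)\bink{n}{k}$ of clauses satisfied by a fixed assignment---neither a clause with a repeated variable nor a duplicate clause can occur, so both of your candidate readings would render that half of {\bf Q0} vacuous. Whatever definition \cite{BPdec} actually uses, your constant-expectation-plus-Markov argument carries over, so this is a bookkeeping mismatch rather than a gap. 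Similarly, $D_x$ in the set-of-clauses model is a hypergeometric-type variable rather than exactly $\Bin(m,k/n)$, but it is stochastically dominated by $\Bin(m,k/n)$, so the Chernoff estimate you invoke still applies; the exponent you obtain is actually $\Theta(\ln^2 n\cdot\ln\ln n)$, which is stronger than the $\Omega(\ln n\cdot\ln\ln n)$ you state, but either suffices for the union bound.
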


Let $\Phi$ be a $k$-CNF and let $\delta>0$. 
For a number $\delta>0$ and an index $l> t$ we say that $x_{l}$ is {\em $(\delta,t)$-biased} if 
the result $\mu_{x_{l}}(\Phi_{t,\sigma},\omega)$ of the BP computation
on  $\Phi_{t,\sigma}$
differs from $\frac12$ by more than $\delta$, i.e.,
	$$\abs{\mu_{x_{l}}(\Phi_{t,\sigma},\omega)-1/2}>\delta.$$
Moreover, $(\Phi,\sigma)$ is {\em $(\delta,t)$-balanced} if no more than
$\delta \theta n$ variables are $(\delta,t)$-biased.

\begin{theorem}[\cite{BPdec}]\label{Thm_dynamics}
There is $\rho_0>0$ such that for any $k,r$ satisfying $\rho_0\cdot 2^k/k\leq r\leq2^k\ln2$ and $n$ sufficiently large
the following is true.
Suppose $(\Phi,\sigma)$ is $(\delta_t,t)$-quasirandom for some $1\leq t\leq T=(1-\ln(\rho)/(c^2k))n$.
Then $(\Phi,\sigma)$ is $(\delta_t,t)$-balanced.
\end{theorem}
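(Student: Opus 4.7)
\emph{Proof proposal.}
The plan is to analyse the BP iteration $\mu\brk{\ell}=\BP(\mu\brk{\ell-1})$, starting from $\mu\brk{0}=\tfrac{1}{2}\vecone$, by induction on $\ell=0,1,\ldots,\omega$. The inductive invariant is that the set
$$Q^{[\ell]}=\cbc{x\in V_t:\exists a\in N(x),\,\zeta\in\cbc{0,1}\ \text{with}\ \abs{\mu\brk{\ell}_{x\to a}(\zeta)-\tfrac12}>\delta_t/(10\theta k)}$$
of variables carrying appreciable outgoing bias satisfies $\abs{Q^{[\ell]}}\leq\delta_t\theta n$. The base case $\ell=0$ is trivial, and transferring from messages to marginals via (\ref{eqBPmarginal}) at $\ell=\omega$ yields $(\delta_t,t)$-balanced, which is what we want.

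For the inductive step the strategy is to Taylor-expand the update rules (\ref{eqBPai}) and (\ref{eqBPia}) around $\mu=\tfrac12\vecone$. A clause-to-variable message $\mu_{b\to x}(\zeta)-\tfrac12$ has size $O(2^{-|N(b)|})$ unless some other variable $y\in N(b)\setminus\cbc x$ already carries bias, in which case the leading linear contribution is exactly $\pm2^{-|N(b)|}\sign(x,b)\sign(y,b)\Gamma_y$, matching the kernel of the operator $\Lambda_Q$ from {\bf Q4}. Splitting $N(x)$ into three regimes decouples the analysis: typical-length clauses with at most one biased neighbour (giving the linear term encoded by $\Lambda_{Q^{[\ell-1]}}$ together with the deterministic null sum controlled by~(\ref{Q2c})), clauses with two or more biased neighbours (bounded via~(\ref{Q2b})), and clauses of atypical length (bounded via the first half of {\bf Q1}). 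Inequality~(\ref{Q2a}) and the second half of {\bf Q1} contain the nonlinear remainders from the Taylor expansion, while {\bf Q3} prevents any single cluster of biased variables from disproportionately influencing too many neighbours. The cut-norm bound $\cutnorm{\Lambda_{Q^{[\ell-1]}}}\leq\delta_t^4\theta n$ supplied by {\bf Q4} then implies, by the defining property of $\cutnorm{\cdot}$, that the number of $x$ for which the linear contribution exceeds $\delta_t/(10\theta k)$ is at most $O(\delta_t^3\theta n)$. Adding up the three regimes and the at-most $\ln\ln n$ redundant clauses permitted by {\bf Q0} gives $\abs{Q^{[\ell]}}\leq\delta_t\theta n$, closing the induction.

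The main obstacle is the feedback across up to $\omega$ iterations: a biased message at step $\ell-1$ can in principle create several biased messages at step $\ell$, and without care this amplification could compound geometrically. The delicate choices $\delta_t=\exp(-c\theta k)$ in~(\ref{eqdeltat}) and the fourth-power slack in {\bf Q4} are tailored precisely so that the linear part of one BP update is genuinely contracting on the bias scale $\delta_t$, with room to absorb the polynomial-in-$\theta k$ losses produced by the Taylor remainders and the clause-length fluctuations controlled by {\bf Q1}--{\bf Q3}. The upper bound $t\leq T=(1-\ln(\rho)/(c^2k))n$ on the stopping time is what keeps $\theta k$ large enough for $\delta_t$ to dominate these losses; once $\theta$ drops below this threshold the condensation/forcing phenomena of \Thm s~\ref{XThm_cond}--\ref{XThm_forced} take over and the inductive scheme would no longer close. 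The detailed bookkeeping along these lines is carried out in~\cite{BPdec}; the contribution of the present paper is to certify the quasi-randomness hypothesis {\bf Q0}--{\bf Q4} for the outcome of the decimation process via the planted-model machinery of Section~\ref{Sec_planted}, after which \Thm~\ref{Thm_dynamics} is invoked as a black box.
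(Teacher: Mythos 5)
The paper does not prove this theorem---it is imported verbatim from \cite{BPdec}, which is exactly what the bracketed citation in the theorem header signals, and you correctly identify this at the end of your proposal. Since there is no proof in the present paper to compare against, the question is only whether your account of the theorem's role and your sketch of the argument in \cite{BPdec} are plausible. They are: the conditions {\bf Q0}--{\bf Q4} were evidently designed to serve precisely the function you describe ({\bf Q1} controls clause-length fluctuations after decimation, {\bf Q2}/{\bf Q3} handle clauses with multiple biased neighbours, {\bf Q4} bounds the linear part of the BP update via the cut-norm), and the scale $\delta_t=\exp(-c\theta k)$ together with the fourth-power slack in {\bf Q4} must be what makes the contraction close across iterations. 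Your specific bookkeeping (the threshold $\delta_t/(10\theta k)$ for the inductive set, the $O(\delta_t^3\theta n)$ Markov bound from the cut-norm) is heuristic and cannot be checked from this paper alone, but the main point stands: the present paper's contribution is \Prop~\ref{Prop_reason}, which certifies {\bf Q0}--{\bf Q4} for the outcome of the decimation process via the planted-model transfer of \Cor~\ref{Cor_Pnm'} and \Thm~\ref{XCor_SATExchange}, after which \Thm~\ref{Thm_dynamics} is invoked as a black box to deduce \Thm~\ref{XThm_NoBP}. You have characterized that division of labour correctly.
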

At the end of this section, we will verify that random formulas chosen from the distribution $\cP_k'(n,m)$
are indeed quasirandom.

\begin{proposition}\label{Prop_reason}
There exists a constant $\rho_0>0$ such that for any $k,r$ satisfying $\rho_0\cdot2^k/k\leq r\leq2^k\ln2$
there is $\xi=\xi(k,r)>0$ so that for $n$ large and $\delta_t$, $T$ as in~\Thm~\ref{Thm_dynamics}
the following is true.
Let  $(\Phi,\sigma)$ be a pair chosen from the planted model $\cP_k'(n,m)$,
given that $\sigma=\vecone$ is the all-true assignment.
Then
	\begin{eqnarray*}
	\pr\brk{(\Phi,\sigma)\mbox{ is $(\delta_t,t)$-quasirandom}|\mbox{\bf Q0}}
		&\geq&
			1-\exp\brk{-\rho 2^{1-k}n}
	\end{eqnarray*}
 for any $1\leq t\leq T$.
\end{proposition}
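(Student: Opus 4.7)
The plan is to verify each of the four quasirandomness conditions {\bf Q1}--{\bf Q4} separately, each with failure probability at most $\tfrac14\exp(-\rho 2^{1-k}n)$, and then combine via a union bound. Throughout I would work in the conditional planted distribution $\cP_k'(n,m)$ given $\SIGMA=\vecone$, in which each of the $(2^k-1)\bink nk$ clauses satisfied by $\vecone$ is included independently with probability $p=m/((2^k-1)\bink nk)$. The key preliminary is that after substituting $\true$ for $x_1,\dots,x_t$, a clause survives iff none of its positive literals lies in $\{x_1,\dots,x_t\}$, and its surviving length in $\Phi_{t,\vecone}$ is essentially $\Bin(k,\theta)$ conditional on survival; in particular the lengths concentrate sharply around $\theta k$, which explains the cutoffs $\theta k/10\leq|N(b)|\leq 10\theta k$ appearing in the definition.

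For {\bf Q1}, Chernoff bounds on clause lengths together with a bound on the number of variables in exceptionally long or short clauses suffice. For the secondary sum, the expectation of $\sum_{b\in N(x)}2^{-|N(b)|}$ is $O(\rho/2^k)$ per variable, so an exponential-moment/Markov estimate over the independent clauses bounds the violating fraction of $x$ well within $10^{-4}\delta_t$. For {\bf Q2} and {\bf Q3}, I would union-bound over subsets $Q\subseteq V_t$ of size $\leq O(\delta_t\theta n)$, of which there are at most $\exp(O(\delta_t\theta n\ln(1/\delta_t)))$. For a fixed $Q$, each of the sums~(\ref{Q2a})--(\ref{Q2c}) is essentially a sum of independent per-clause contributions, each weighted by $2^{-|N(b)|}\leq 2^{-\theta k/10}=\delta_t^{\Omega(1)}$; a direct Markov/Bernstein argument combined with this geometric gain comfortably beats the union cost. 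Condition {\bf Q3} reduces to a standard first-moment expansion bound on the number of clauses densely covered by $Q$ in the random bipartite factor graph.

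The main obstacle is {\bf Q4}, bounding $\cutnorm{\Lambda_Q}\leq\delta_t^4\theta n$ uniformly in $Q$. The off-diagonal entry $(\Lambda_Q)_{xy}$ is a sum over joint clauses $b\ni x,y$ of $\sign(x,b)\sign(y,b)\cdot 2^{-|N(b)|}$, and crucially, in the planted model conditioned on $\vecone$-satisfaction, once one designates the positive literal that certifies satisfaction the remaining literal signs in each clause are independent $\pm 1$'s with mean zero. I would then use the characterization $\cutnorm{\Lambda_Q}=\max_{\eta,\zeta\in\{-1,+1\}^{V_t}}\langle\eta,\Lambda_Q\zeta\rangle$: for each fixed sign-pair $(\eta,\zeta)$, the inner product is a sum of independent mean-zero random variables of magnitude at most $2^{-\theta k/10}=\delta_t^{\Omega(1)}$, so Bernstein's inequality yields a per-pair failure probability that, after paying the $4^{|V_t|}$ net cost of discretizing $(\eta,\zeta)$ on $\{-1,+1\}^{V_t}$ and the $\exp(O(\delta_t\theta n))$ cost of union-bounding over $Q$, still leaves an $\exp(-\Omega(n/2^k))$ factor to spare. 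The delicate point is calibrating the constant $c$ in $\delta_t=\exp(-c\theta k)$ so that the geometric gain from the $2^{-|N(b)|}$ weights comfortably dominates $|V_t|\ln 4$; if the naive Bernstein bound turns out to be too loose, a trace/moment argument (as in the analysis of random $\pm 1$ weighted operators) would be used instead.
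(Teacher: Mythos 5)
Your proposal and the paper's proof take genuinely different routes.

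The paper does not re-verify \textbf{Q1}--\textbf{Q4} from scratch. It invokes an already-established quasirandomness result from~\cite[Appendix~E]{BPdec} (restated here as \Prop~\ref{Prop_reasonable}) for the \emph{unconditioned} random formula $\PHI'$, in which every clause — including all-negative ones — is included with probability $p$ independently. It then transfers this to the planted distribution $\cP_k'(n,m)$ conditioned on $\SIGMA=\vecone$ by the following observation: the planted formula $\Phi$ is a sub-formula of an unconditioned $\Phi'$ obtained by adjoining each of the $\binom{n}{k}$ all-negative clauses with probability $p$. Monotone conditions (\textbf{Q1}, \textbf{Q3}, (\ref{Q2a}), (\ref{Q2b})) then transfer for free, while the signed sums in (\ref{Q2c}) and the operator in \textbf{Q4} change by a correction term supported only on all-negative clauses; \Lem~\ref{Lemma_add} shows this correction is negligible. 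This reduction is short and avoids redoing any probabilistic heavy lifting.

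Your direct approach, working entirely inside the planted model, is in principle viable but faces a concrete subtlety that the paper's reduction sidesteps. You write that ``once one designates the positive literal that certifies satisfaction the remaining literal signs in each clause are independent $\pm 1$'s with mean zero.'' That is not quite right: in $\cP_k'(n,m)\mid\{\SIGMA=\vecone\}$ the sign pattern of a clause is uniform over $\cbc{-1,+1}^k\setminus\cbc{(-1,\dots,-1)}$, and for two slots $x,y$ one computes $\Erw\brk{\sign(x,b)\sign(y,b)}=-1/(2^k-1)\neq 0$. This $O(2^{-k})$ bias introduces a systematic (non-mean-zero) contribution to $\bck{\eta,\Lambda_Q\zeta}$ that is not covered by the Bernstein concentration step and must be estimated separately before the union bound over $(\eta,\zeta)$. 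The bias is of course exponentially small in $k$, so it is likely controllable, but it is not merely a ``detail'': without this correction the mean of $\bck{\eta,\Lambda_Q\zeta}$ can scale like $\Theta(2^{-k})\cdot\abs{\text{pairs}}$, which is of the same rough order as the threshold $\delta_t^4\theta n$ you are trying to beat, so the constants must be checked. The paper's route, by contrast, performs the concentration argument in the unconditioned model where signs \emph{are} genuinely independent and mean-zero, and then isolates the conditioning correction explicitly.

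Beyond this point, the rest of your plan (Chernoff for clause lengths in \textbf{Q1}, first-moment expansion for \textbf{Q3}, union bound over $Q$ of size $O(\delta_t\theta n)$ for \textbf{Q2}--\textbf{Q4}, $4^{|V_t|}$ discretization for the cut norm) is the natural way to establish \Prop~\ref{Prop_reasonable} itself, and is consistent in spirit with what must appear in~\cite[Appendix~E]{BPdec}. What you gain is a self-contained treatment; what you lose is that you must re-derive a substantial technical appendix that the paper is able to cite, and you must additionally handle the sign-bias issue above, which the paper's two-model comparison makes disappear.
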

Finally, \Thm~\ref{XThm_NoBP} follows by combining \Cor~\ref{XCor_SATExchange}, \Thm~\ref{Thm_dynamics}, and \Prop~\ref{Prop_reason}.

\subsubsection{Proof of \Prop~\ref{Prop_reason}.}

Let $\PHI'=\PHI'_k(n,m)$ be a random formula obtained by including each possible clause
with probability $p=m/(2^k\bink nk)$ independently.

\begin{proposition}[{\cite[Appendix~E]{BPdec}}]\label{Prop_reasonable}
There exists a constant $\rho_0>0$ such that for any $k,r$ satisfying $\rho_0\cdot2^k/k\leq r\leq2^k\ln2$
for $n$ large and $\delta_t$, $T$ as in~(\ref{eqdeltat}) the following properties
hold for a random formula $\PHI'$ with probability at least
	$1-\exp\brk{-10\sum_{s\leq t}\delta_s}$
for any $1\leq t\leq T$, given that $\PHI'$ satisfies {\bf Q0}.
\begin{enumerate}
\item {\bf Q1} and {\bf Q3} are satisfied.
\item For any set $Q$ of size $|Q|\leq\delta\theta n$ there are at most
	$10^{-5}\delta\theta n$ variables $x$ that satisfy either (\ref{Q2a}), (\ref{Q2b}), or
		\begin{equation}\label{Q2c'}
		\abs{\sum_{b\in N_{\leq1}(x,Q)}\hspace{-2mm}\frac{\sign(x,b)}{2^{|N(b)|}}}>\frac{\delta}{2000}.
		\end{equation}
\item For any $Q$ the operator $\Lambda_Q$ from~(\ref{Q4}) satisfies
		$\cutnorm{\Lambda_Q}\leq \delta^4(n-t)/2$
\end{enumerate}
\end{proposition}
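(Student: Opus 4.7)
My plan is to verify the three enumerated items separately, in each case exploiting the fact that under the distribution of $\PHI'$ each potential $k$-clause is present independently with probability $p=m/(2^k\bink{n}{k})$. This independence turns all relevant random variables --- post-decimation clause lengths, variable degrees, and weighted clause-variable sums --- into sums of independent bounded random variables whose expectations are easy to compute. The entire argument thus reduces to a sequence of Chernoff/Bernstein-type concentration bounds combined with union bounds over the relevant sets $Q$ and, where necessary, over an $\eps$-net of test vectors $\zeta$. Throughout I condition on {\bf Q0}, which bounds the maximum variable degree by $\ln n$ and lets me truncate heavy-tailed contributions without loss.

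For item~1, I begin with {\bf Q1}. After substituting the reference assignment for $x_1,\ldots,x_t$, the post-decimation length of a surviving clause is sharply concentrated around $\theta k$, so a one-line Chernoff estimate yields $\Erw[\#\{\text{vars in clauses of length outside }[\theta k/10,10\theta k]\}]=O(n\exp(-c\theta k))=O(\delta_t n)$, and a second Chernoff bound across variables gives tail $\exp(-\Omega(\delta_t n))$. The second half of {\bf Q1} follows by computing $\Erw\sum_{b\in N(x)}2^{-|N(b)|}=O(\rho/k)$, applying Markov per variable, and concentrating the resulting indicator sum. For {\bf Q3}, I fix $z$, $q$, and a set $Q$ of size $q$, use Chernoff to bound $\sum_{b:|N(b)\cap Q|\ge z|N(b)|}|N(b)|$, and observe that the per-$Q$ failure probability decays like $\exp(-\Omega(q\log(n/q)))$, which dominates the $\bink{n}{q}$ cost of the union bound.

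For item~2, I fix $Q$ with $|Q|\le\delta\theta n$ and show that for each variable $x$ the probability of satisfying any one of (\ref{Q2a}), (\ref{Q2b}), or (\ref{Q2c'}) is small enough that a Chernoff bound on the number of offending variables, followed by a union bound over $Q$, yields the claim. The sums in (\ref{Q2a}) and (\ref{Q2b}) are handled by computing expectations clause-by-clause and then applying exponential moment bounds; the centered sum in (\ref{Q2c'}) requires a Bernstein-type inequality, exploiting that the contributions $\sign(x,b)2^{-|N(b)|}$ are essentially mean-zero given clause lengths and have variance controlled by $\Erw\sum_b 4^{-|N(b)|}$. The main obstacle is closing the union bound: the entropy of the $Q$-family is $\bink{n}{\delta\theta n}\sim\exp(\delta\theta n\log(1/\delta))$, so I need the per-$Q$ tail probability to be at most $\exp(-C\delta\theta n\log(1/\delta))$ for a large constant $C$. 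This is what ultimately fixes the small numerical constants $10^{-5}$ and $2000$ in the statement and requires tuning the constant $c$ in $\delta_t=\exp(-c\theta k)$.

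For item~3, I reduce $\cutnorm{\Lambda_Q}$ to a maximum of $\norm{\Lambda_Q\zeta}_1$ over $\zeta\in\cbc{-1,0,+1}^{V_t}$, so that it suffices to control $\Lambda_Q\zeta$ on an explicit finite family. For each fixed pair $(\zeta,Q)$, the vector $\Lambda_Q\zeta$ is a sum of independent bounded contributions indexed by clauses, so Bernstein yields a tail like $\exp(-\Omega(\delta^8\theta n))$ per coordinate, and summing over coordinates and discretizing gives an aggregate failure probability of at most $\exp(-10\sum_{s\le t}\delta_s)$. The step I expect to be hardest is constructing a sufficiently coarse $\eps$-net over $(\zeta,Q)$ so that its log-cardinality stays comfortably below the Bernstein exponent; I intend to follow the template of Appendix~E of~\cite{BPdec}, grouping variables by their degree profile and using the locality structure of $N_{\le1}(x,Q)$ to argue that replacing $\zeta$ by a nearby point in the net perturbs $\norm{\Lambda_Q\zeta}_1$ by $o(\delta^4\theta n)$. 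Assembling items~1--3 and taking a final union bound over $1\le t\le T$ then delivers the claimed probability.
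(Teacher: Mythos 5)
You are reconstructing a proof that this paper never gives: Proposition~\ref{Prop_reasonable} is imported verbatim from Appendix~E of~\cite{BPdec}, so there is no internal argument to compare against, and your sketch has to stand on its own. As it stands it has a genuine gap in item~2. For a fixed $Q$ you must show that the number of variables violating (\ref{Q2a}), (\ref{Q2b}) or (\ref{Q2c'}) exceeds $10^{-5}\delta\theta n$ only with probability about $\exp(-C\delta\theta n\ln(1/\delta))$, since this is what the union bound over the $\bink{\theta n}{\delta\theta n}$ choices of $Q$ costs. But the events ``$x$ is offending'' are \emph{not} independent: a single clause containing two variables of $Q$ enters the sum in (\ref{Q2b}) of every other variable it contains, and likewise clauses are shared across the sums in (\ref{Q2a}) and (\ref{Q2c'}). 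So ``a Chernoff bound on the number of offending variables'' is simply not available, and naive exposure/bounded-differences arguments are far too weak (each of the $\Theta(\rho2^kn/k)$ clauses can flip the status of up to $k$ variables, which destroys the exponent needed to beat the union bound over $Q$). Controlling this dependent upper tail is exactly the substantive content of \cite[Appendix~E]{BPdec}; your plan contains no idea for it, and small per-variable violation probabilities alone do not suffice.

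Item~3 is also not closed. Summing per-coordinate Bernstein bounds does not give concentration of $\norm{\Lambda_Q\zeta}_1$; you need to linearize, e.g.\ via $\norm{\Lambda_Q\zeta}_1=\max_{\xi\in\cbc{\pm1}^{V_t}}\scal{\xi}{\Lambda_Q\zeta}$, write the resulting bilinear form as a sum of independent per-clause contributions, and then verify that the Bernstein exponent beats the $4^{\theta n}$ union bound over $(\xi,\zeta)$ together with the union over $Q$. That exponent is only of order $\theta n\cdot\eul^{\Omega(\theta k)}/(\rho\,\theta k)$, so the argument survives only because $t\leq T$ forces $\theta k\geq\ln(\rho)/c^2$ with $c$ in (\ref{eqdeltat}) a small absolute constant and $\rho\geq\rho_0(c)$ large --- a numerical verification (also fixing how small $c$ must be relative to the length window $\brk{0.1\theta k,10\theta k}$ in (\ref{eqNleq1})) that your sketch never performs. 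Moreover you explicitly defer the net/degree-grouping step to ``the template of Appendix~E of~\cite{BPdec}'', i.e.\ to the very proof you are supposed to supply. Item~1 (\textbf{Q1}, \textbf{Q3}) is indeed routine, and conditioning on \textbf{Q0} is harmless since $\pr\brk{\mbox{\bf Q0}}=1-o(1)$; but overall this is a plausible plan rather than a proof, and the two points above are where it would fail without new input.
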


Let $\PHI_t$ be the formula obtain from $\PHI$ by substituting the value `true' for $x_1,\ldots,x_{t-1}$ and simplifying.
Since the $\delta_s$ form a geometric sequence, we have
	\begin{eqnarray*}
	\Sigma_t&=&\sum_{s\leq t}\delta_s\sim \frac n{ck\exp(c\theta k)}.
	\end{eqnarray*}
Observe that
	\begin{eqnarray*}
	\theta\delta n&>&10^{15}\Sigma_t
	\end{eqnarray*}
if $\rho\geq\rho_0$ is chosen sufficiently large.

\begin{lemma}\label{Lemma_add}
There exists a constant $\rho_0>0$ such that for any $k,r$ satisfying $\rho_0\cdot2^k/k\leq r\leq2^k\ln2$
the following is true for the random formula $\PHI'$ with probability at least
	$1-\exp(-\rho2^{2-k}n).$
\begin{enumerate}
\item The total number of all-negative clauses is bounded by $2^{1-k}m$.
\item For each variable $x\in V_t$ let $N_x$ be the number of all-negative clauses in which $x$ appears.
		Then the number of variables $x\in V_t$ with $N_x>2^{0.01\theta k}$ is bounded by
		$\delta^2\theta n$.
\end{enumerate}
\end{lemma}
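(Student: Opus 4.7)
My plan is to reduce both statements to a single Chernoff tail bound applied to the random variable $Y$ counting the total number of all-negative clauses in $\PHI'$. Since each of the $\bink{n}{k}$ possible all-negative clauses is included independently with probability $p = m/(2^k\bink{n}{k})$, we have $Y\sim\Bin(\bink{n}{k},p)$ with mean $\mu_Y = m/2^k = \rho n/k$.

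For part~1 I would apply the Chernoff bound~(\ref{eqChernoff}) directly: $\pr\brk{Y \geq 2\mu_Y} \leq \exp(-\mu_Y\varphi(1))$, where $\varphi(1) = 2\ln 2 - 1 > 0.38$. For $k \geq k_0$ sufficiently large one checks $0.38/k \geq 4/2^k$, so the right-hand side is bounded by $\exp(-\rho 2^{2-k}n)$, which yields $Y \leq 2\rho n/k = 2^{1-k}m$ with the required probability and hence part~1.

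For part~2 I would combine part~1 with an elementary pigeonhole argument. Set $T = 2^{0.01\theta k}$ and $M = \abs{\cbc{x \in V_t : N_x > T}}$. Each all-negative clause contributes $+1$ to $N_x$ for exactly the $k$ variables it contains, so $\sum_{x \in V_t}N_x \leq kY$, while by definition $\sum_{x \in V_t,\,N_x > T}N_x > TM$. Thus $M < kY/T$, and on the event of part~1 this gives $M < 2\rho n/T$. To conclude $M \leq \delta^2 \theta n$ without spending any additional failure probability, it suffices to verify the deterministic inequality $2\rho/T \leq \delta^2\theta$, i.e.,
\[
\theta\exp\bigl((0.01\ln 2 - 2c)\theta k\bigr) \;\geq\; 2\rho.
\]

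The main obstacle is purely calculational and lies in calibrating the absolute constants. Picking $c$ small enough that $\alpha := 0.01\ln 2 - 2c > 0$ and, more importantly, that $\alpha/c^2$ is large, the lower bound $\theta k \geq \ln\rho/c^2$ supplied by the hypothesis $t \leq T = (1 - \ln\rho/(c^2k))n$ forces $\theta\exp(\alpha\theta k) \geq \theta\rho^{\alpha/c^2}$; a subsequent choice of $\rho_0$ and $k_0$ then secures the displayed inequality uniformly throughout the allowed range of $(\rho, k, \theta)$, completing the argument.
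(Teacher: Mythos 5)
Your part~1 is correct and matches the paper's one-line Chernoff argument. Part~2, however, has a genuine gap, and it lies precisely in the step you flagged as ``purely calculational.''

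Your pigeonhole step rests on the deterministic inequality $\sum_{x\in V_t}N_x\leq kY$. This is true but too loose by a factor of $\theta$: each all-negative clause $C$ contributes $|C\cap V_t|$ to the left side, and $|C\cap V_t|$ is typically about $k\theta$, not $k$. Carrying the loose bound through leaves you needing $\theta\exp\bigl((0.01\ln 2-2c)\theta k\bigr)\geq 2\rho$ for \emph{all} $\theta$ with $\theta k\geq\ln\rho/c^2$. The left side is minimized at $\theta=\ln\rho/(c^2k)$, where it equals $\frac{\ln\rho}{c^2k}\,\rho^{\alpha/c^2}$ with $\alpha=0.01\ln2-2c$. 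For any fixed admissible $\rho$ (say $\rho=\rho_0$) this tends to $0$ as $k\to\infty$, while the right side $2\rho_0$ stays constant, so no choice of $\rho_0,k_0,c$ can make the inequality hold uniformly. Your final paragraph assumes the inequality can be secured by calibration, but it cannot.

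The missing ingredient is the factor of $\theta$. The paper's proof recovers it by a symmetry argument: the pigeonhole bound $2\rho\cdot 2^{-0.01\theta k}n$ controls the total number of ``heavy'' variables among all $n$ variables, and since the distribution of $(N_{x_1},\dots,N_{x_n})$ is exchangeable, the set of heavy variables is a uniformly random subset of its size; hence only a $\theta$-fraction of them land in $V_t$, and Chernoff gives concentration. An alternative fix within your framework is to replace the deterministic bound $\sum_{x\in V_t}N_x\leq kY$ with a Chernoff bound on $\sum_{x\in V_t}N_x$ directly: it is a sum of independent indicators over all-negative clauses weighted by $|C\cap V_t|$, with mean $\theta\rho n$, so $\sum_{x\in V_t}N_x\leq 2\theta\rho n$ with the required probability. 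Either route removes the unwanted $\theta$ from the target inequality, reducing it to $\exp(\alpha\theta k)\geq 4\rho$, which at $\theta k=\ln\rho/c^2$ reads $\rho^{\alpha/c^2-1}\geq 4$ and does hold once $\alpha/c^2>1$ and $\rho_0$ is large enough.
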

\begin{proof}
The first assertion simply follows from Chernoff bounds.
With respect to the second assertion, assume that the first claim occurs, i.e., the total number of all-negative
clauses is bounded by $2^{1-k}m=2\rho n/k$.
Then for each variable the average number of occurrences in such clauses is bounded by $2\rho$.
Therefore, the total number of variables that occur more than $2^{0.01\theta k}$ times is bounded by $2\rho\cdot2^{-0.01\theta k}n$.
By symmetry, the number of such variables that are amongst the last $\theta n$ variables is (asymptotically)
binomially distribution with mean $2\rho\cdot2^{-0.01\theta k}\theta n$.
Therefore, the second assertion follows from Chernoff bounds.
\qed\end{proof}

\begin{proof}[\Prop~\ref{Prop_reason}]
Let $(\Phi,\sigma)$ be a random pair chosen from the distribution $\cP'_k(n,m)$.
We may assume without loss of generality that $\sigma$ is the all-true assignment.
Thus, the formula $\Phi$ is obtained by including each clause that does not consist of negative literals
only with probability $p=m/((2^k-1)\bink nk)$ independently.
Now, let $\Phi'$ be the formula obtained by addition to $\Phi$ each of the $\bink nk$ all-negative clauses independently with probability $p$.
Then $\Phi'$ has distribution $\PHI'_k(n,m\cdot\frac{2^k}{2^k-1})$.
Thus, with probability at least 
	$1-\exp[-10\sum_{s\leq t}\delta_s]$
the formula $\Phi'$ has the properties 1.--3.\ from \Prop~\ref{Prop_reasonable}.
Let us condition on this event.

Since $\Phi'$ contains $\Phi$ as a sub-formula, the fact that $\Phi'$ enjoys properties {\bf Q1} and {\bf Q3}
implies that the same is true of $\Phi$.
Furthermore, any variable $x$ for which either~(\ref{Q2a}) or (\ref{Q2b}) is true in $\Phi$
has the same property in $\Phi'$ (because the expressions on the left hand side are monotone with respect to the addition of clauses).
With respect to the expression in~(\ref{Q2c}), we decompose the sum for the pair $(\Phi,\sigma)$ as
	$$S_x(\Phi,\sigma)=S_x(\Phi',\sigma)-R_x,$$
where $R_x$ sums over all clauses that are in $\Phi'$ but not in $\Phi$.
Due to {\bf Q1}, we may assume that only clauses of length at least $0.1\theta k$ occur in the sum $R_x$.
Thus, letting $N_x$ denote the number of clauses in $\Phi'\setminus\Phi$ containing $x$,
we get $|R_x|\leq2^{-0.1\theta k}N_x$.
The second part of \Lem~\ref{Lemma_add} implies that for all but $\delta^2\theta n$ variables
we have $N_x\leq2^{0.01\theta k}$.
Hence, $R_x$ is tiny for all but $\delta^2\theta n$ variables.
This shows that $\Phi$ satisfies {\bf Q2}.

With respect to {\bf Q4}, let $D$ be the difference of the two linear operators for $\Phi$ and $\Phi'$.
Only clauses of length at least $0.1\theta k$ and at most $10\theta k$ contribute to $D$.
Hence, letting $N$ denote the number of all-negative clauses, we have
	\begin{eqnarray*}
	\cutnorm D&\leq&2^{-0.1\theta k}(10\theta k)^2N.
	\end{eqnarray*}
Since $N\leq2^{1-k}m=2\rho n/k$ by \Lem~\ref{Lemma_add},
we thus get
	$$\cutnorm D\leq200\theta n(\theta k)2^{-0.1\theta k}.$$
Hence, the third part of \Prop~\ref{Prop_reasonable} implies that $\Phi$ satisfies {\bf Q4}.
\qed\end{proof}

\end{appendix}

\end{document}